\def\frak{\mathfrak}
\theoremstyle{plain} \newtheorem{Thm}{Theorem}[section]
\theoremstyle{plain} \newtheorem{Cor}[Thm]{Corollary}
\theoremstyle{plain} \newtheorem{Prop}[Thm]{Proposition}
\theoremstyle{plain} \newtheorem{Lemma}[Thm]{Lemma}
\theoremstyle{definition} 
\theoremstyle{remark} \newtheorem{Rem}[Thm]{Remark}
\theoremstyle{definition} 
\newcommand{\thmlist}{
\renewcommand{\theenumi}{\alph{enumi}}
\renewcommand{\labelenumi}{(\theenumi)}}
\newcommand{\sign}{\mathop{\rm{sign}}}
\newcommand{\grad}{\mathop{\rm{grad}}}
\newcommand{\Hom}{\mathop{\rm{Hom}}}
\newcommand{\diver}{{\rm div\,}}
\newcommand{\norm}[1]{\|\/#1\/\|}
\newcommand{\normuno}[1]{\|\/#1\/\|_1}
\newcommand{\inner}[2]{\langle#1,#2\rangle}
\newcommand{\C}{\ensuremath{\mathbb C}}
\newcommand{\R}{\ensuremath{\mathbb R}}
\renewcommand{\l}{\lambda}
\renewcommand{\a}{\alpha}
\newcommand{\fa}{\mathfrak{a}}
\newcommand{\frakg}{\mathfrak{g}}
\newcommand{\la}{\l_\a}
\begin{document}

\makeatletter
\title[Uncertainty principles for the Schr\"odinger equation]{Uncertainty principles for the Schr\"odinger equation\\ on Riemannian symmetric spaces of the noncompact type}
\author{A. Pasquale}
\address{\textbf{
Laboratoire de Math\'ematiques et Applications de Metz (LMAM, UMR CNRS 7122),
  Universit\'e Paul Verlaine -- Metz, Ile du Saulcy, 57045 Metz cedex 1, France.} \textit{Email:} \textrm{pasquale@math.univ-metz.fr}}
\author{M. Sundari}
\address
{\textbf{Chennai Mathematical Institute, Plot No. H1, SIPCOT IT Park, Padur P.O., Siruseri 603 103, India.}   \textit{Email:} \textrm{sundari@cmi.ac.in} } 
\subjclass[2000]{Primary 43A85; Secondary  58Jxx}
\keywords{Uncertainty principle, Schr\"odinger equation, Helgason-Fourier transform, Beurling theorem, Hardy theorem}
\maketitle

\begin{abstract}
Let $X$ be a Riemannian symmetric space of the noncompact type. We prove that the solution of the time-dependent Schr\"odinger equation on $X$ with square integrable initial condition $f$ is identically zero at all times $t$ whenever $f$ and the solution at a time $t_0>0$ are simultaneously very rapidly decreasing. The stated condition of rapid decrease is of Beurling type. Conditions respectively of Gelfand-Shilov, Cowling-Price and Hardy type are deduced.
\end{abstract}

\section{Introduction}
Consider the initial value problem for the time-dependent Schr\"odinger equation on a Riemannian symmetric space of the noncompact type $X$:
\begin{equation} 
\label{eq:SchroedingerX}
\tag{{\rm S}}
\begin{split}
&i\partial_t u(t,x)+\Delta u (t,x)=0\\
&u(0,x)=f(x)
\end{split}
\end{equation}
where $\Delta$ denotes the Laplace-Beltrami operator on $X$. 
In \cite{Chanillo}, Sagun Chanillo initiated the study of certain uniqueness properties for the solutions of (\ref{eq:SchroedingerX}) that are related with the uncertainty principle. According to this principle, the solution $u(t,\cdot)$ must be identically zero at all times $t$ whenever the initial condition $f$ and the solution at a certain time $t_0\neq 0$ are simultaneously very rapidly decreasing. 
The case considered by Chanillo corresponds to Riemannian symmetric spaces of the form $X=G/K$ where $G$ is a noncompact connected semisimple Lie group possessing a complex structure and $K$ is a maximal compact subgroup of $G$. Moreover, in \cite{Chanillo} the initial condition $f$ is assumed to be $K$-invariant, which ensures that the solution itself is $K$-invariant. Some related articles in the Euclidean setting are
\cite{IK}, \cite{EKPV1}, \cite{EKPV2}, \cite{EKPV3} and \cite{CEKPV}; for the Heisenberg group \cite{BT}.

In this paper, we consider the initial value problem (\ref{eq:SchroedingerX}) on arbitrary Riemannian symmetric spaces $X$ of the noncompact type and not-necessarily $K$-invariant initial data $f \in L^2(X)$. We show that if $f$ and the solution $u(t_0,\cdot)$ at some time $t_0>0$ satisfy a suitable Beurling type condition, then $u(t,\cdot)=0$ for all $t \in \R$. This describes an uncertainty principle because of the  relation one gets between the Fourier transforms of $u_t=u(t,\cdot)$ and the initial condition $f$; see (\ref{eq:FourierHelgasonrelation}). Our result implies in particular the uniqueness property of solutions for the Schr\"odinger equation (\ref{eq:SchroedingerX}) proved by Chanillo using Hardy type conditions.
 
The rapid decrease of a function on $X$ is measured by means of exponential powers of the distance function $d$ induced by the Riemannian metric. If $o=eK$ denotes the base point in $X$, we set $\sigma(x):=d(o,x)$. The estimates are also in terms of the elementary spherical function of spectral parameter $0$, denoted by $\Xi$. We denote by $C(\R:L^2(X))$ the space of continuous functions
$t \mapsto u(t,\cdot)$ from $\R$ to $L^2(X)$.
We refer to section \ref{subsection:invariantfunctions} for the precise definitions. 
Our main result is the following theorem.

\begin{Thm} \label{thm:BeurlingSchr}
Let $X$ be a Riemannian symmetric space of the noncompact type and let $f \in L^2(X)$.
Let $u(t,x) \in C(\R:L^2(X))$ denote the solution of (\ref{eq:SchroedingerX}) with initial condition $f$. If there is a time $t_0> 0$ so that 
\begin{equation}\label{eq:BeurlingSchr}
\int_X \int_X |f(x)| |u(t_0,y)|  \Xi(x) \Xi(y) e^{\frac{\sigma(x)\sigma(y)}{2t_0}} \; dx \,dy < \infty\,, 
\end{equation}
then $u(t,\cdot)= 0$ for all $t \in \R$.
\end{Thm}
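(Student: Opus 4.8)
\emph{Plan of proof.}
The plan is to move everything to the Helgason--Fourier transform, where the Schr\"odinger flow becomes multiplication by a Gaussian symbol, and then to run a Phragm\'en--Lindel\"of argument of Beurling--H\"ormander type. First, since $t\mapsto u(t,\cdot)$ is a continuous $L^2$-valued solution of (\ref{eq:SchroedingerX}), its Helgason--Fourier transform diagonalises the evolution: $\widetilde u(t,\l,b)=e^{-it(\inner{\l}{\l}+\inner{\rho}{\rho})}\,\widetilde f(\l,b)$ --- this is (\ref{eq:FourierHelgasonrelation}). Hence $\widetilde{u_{t_0}}(\l,b)=m(\l)\,\widetilde f(\l,b)$ with $m(\l)=e^{-it_0(\inner{\l}{\l}+\inner{\rho}{\rho})}$; in particular $|\widetilde{u_{t_0}}(\l,b)|=|\widetilde f(\l,b)|$ for $\l\in\fa^*$, while $m$ extends to $\frakacs$ with $|m(\xi+i\eta)|=e^{2t_0\inner{\xi}{\eta}}$ for $\xi,\eta\in\fa^*$, a genuine Gaussian symbol. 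If $u(t_0,\cdot)=0$ a.e., then $\widetilde{u_{t_0}}\equiv 0$, whence $\widetilde f\equiv 0$ and $f=0$; so I may assume $u(t_0,\cdot)\not\equiv 0$, and likewise $f\not\equiv 0$.

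The next step is to convert the decay hypothesis into growth information on $\frakacs$. A standard estimate for the Helgason--Fourier transform (using the integral formula, $\|A(x,b)\|\le\sigma(x)$ and $\int_B e^{\rho(A(x,b))}\,db=\Xi(x)$) gives, for every $\l\in\frakacs$,
\[
\int_B|\widetilde f(\l,b)|\,db\ \le\ \int_X|f(x)|\,\Xi(x)\,e^{\|\Im\l\|\,\sigma(x)}\,dx ,
\]
and the same bound with $u(t_0,\cdot)$ in place of $f$; this is exactly where the weight $\Xi$ of (\ref{eq:BeurlingSchr}) comes from, and averaging over the boundary $B$ is what disposes cleanly of the extra variable in the non-$K$-invariant setting. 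Writing $\Phi_f(s):=\int_X|f(x)|\Xi(x)e^{s\sigma(x)}\,dx$ and $\Phi_u(s):=\int_X|u(t_0,y)|\Xi(y)e^{s\sigma(y)}\,dy$, Tonelli's theorem turns (\ref{eq:BeurlingSchr}) into the identity
\[
\int_X|u(t_0,y)|\,\Xi(y)\,\Phi_f\!\Bigl(\tfrac{\sigma(y)}{2t_0}\Bigr)dy\ =\ \int_X|f(x)|\,\Xi(x)\,\Phi_u\!\Bigl(\tfrac{\sigma(x)}{2t_0}\Bigr)dx\ <\ \infty .
\]
From this I would deduce that, for a.e.\ $b$, the slice $\l\mapsto\widetilde f(\l,b)$ extends holomorphically to a tube about $\fa^*$ --- to all of $\frakacs$ once $\Phi_f$ is everywhere finite --- with $b$-averaged modulus majorised by $\Phi_f(\|\Im\l\|)$, while $\widetilde{u_{t_0}}(\l,b)=m(\l)\widetilde f(\l,b)$ is majorised by $\Phi_u(\|\Im\l\|)$; the case in which $f$ or $u(t_0,\cdot)$ is supported in a ball is settled separately, since then the corresponding transform is entire of exponential type by the Paley--Wiener theorem and $\widetilde f$ inherits a Gaussian-suppressed growth bound from $\widetilde{u_{t_0}}=m\widetilde f$.

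With these bounds in hand I would fix $\omega\in\fa^*$ with $\inner{\omega}{\omega}=1$ and study $g_b(z):=\widetilde f(z\omega,b)$, an entire function of $z\in\C$. It is dominated in \emph{every} direction by $\Phi_f(|\Im z|)$, but, since $|m(z\omega)|=e^{2t_0\Re z\,\Im z}$, it is also dominated by $e^{-2t_0\Re z\,\Im z}\Phi_u(|\Im z|)$, which is super-exponentially small in the open sectors around $\arg z=\pi/4$ and $\arg z=5\pi/4$. Because the exponent $\tfrac{1}{2t_0}$ in (\ref{eq:BeurlingSchr}) is \emph{exactly} reciprocal to the $2t_0$ in $|m|$, the growth of $\Phi_f$ and $\Phi_u$ cannot compensate: a Phragm\'en--Lindel\"of argument on those sectors (combine the two estimates to control $g_b$ on a half-plane, then propagate across $\fa^*$ using that $\widetilde f$ is square integrable on $\fa^*$ for the Plancherel measure) forces $g_b\equiv 0$. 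Since $\omega$ and $b$ are arbitrary, $\widetilde f\equiv 0$; injectivity of the Helgason--Fourier transform then gives $f=0$, and consequently $\widetilde u(t,\l,b)=e^{-it(\inner{\l}{\l}+\inner{\rho}{\rho})}\widetilde f(\l,b)\equiv 0$, so $u(t,\cdot)=0$ for all $t\in\R$.

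I expect the main obstacle to be the interplay of the last two steps: one must convert the \emph{coupled} physical-space estimate (\ref{eq:BeurlingSchr}) into a matched pair of sharp growth bounds on $\frakacs$, run Phragm\'en--Lindel\"of at the \emph{critical} exponent (this is the Schr\"odinger form of the Hardy/Beurling uncertainty threshold, so there is no slack), keep track of the boundary variable via the $B$-average, and dispose of the borderline cases in which $f$ or $u(t_0,\cdot)$ decays faster than any Gaussian. A convenient device for the estimates of the second step is the Radon (Abel) transform $\Radon$, which intertwines the Helgason--Fourier transform with the Euclidean Fourier transform on $\fa^*$ and reduces (\ref{eq:BeurlingSchr}) to a Euclidean Beurling condition for the free Schr\"odinger flow on $\fa$, where the one-dimensional Phragm\'en--Lindel\"of argument is classical.
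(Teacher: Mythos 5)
Your overall strategy --- diagonalise the flow via the Helgason--Fourier transform and then argue complex-analytically with Phragm\'en--Lindel\"of --- is not the paper's route, and as sketched it has a genuine gap at its central step. The estimate $\int_B|\mathcal Ff(\l,b)|\,db\le\Phi_f(|\Im\l|)$ and its companion for $u_{t_0}$ are correct, but they decouple the hypothesis (\ref{eq:BeurlingSchr}) into two separate one-variable growth functions, and the Beurling condition gives \emph{no control on the growth rate} of $\Phi_f(s)$ and $\Phi_u(s)$ as $s\to\infty$: it only yields finiteness of $\Phi_u(\sigma(x)/2t_0)$ against the weight $|f(x)|\Xi(x)$, so $\Phi_u(s)$ may grow faster than any Gaussian (indeed faster than $e^{e^{s}}$). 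Consequently your key claim --- that $e^{-2t_0\Re z\,\Im z}\,\Phi_u(|\Im z|)$ is ``super-exponentially small'' in the sectors around $\arg z=\pi/4$ --- is false in general, and the Phragm\'en--Lindel\"of argument at the critical exponent cannot be run with these inputs. The whole difficulty of the Beurling--H\"ormander theorem is precisely that the \emph{coupled} double integral must be exploited as such (this is what H\"ormander's and Bagchi--Ray's proofs do); replacing it by a matched pair of separate majorants loses the theorem. You would also need to justify the holomorphic extension of $\mathcal Ff(\cdot,b)$ beyond a bounded tube when $u_{t_0}$ happens to be compactly supported, and the phrase ``propagate across $\fa^*$ using square integrability'' does not describe an actual argument.

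The paper takes the route you relegate to your closing remark. It shows that (\ref{eq:BeurlingSchr}) forces $f,u_{t_0}\in L^1(X)_C$ for some $C>0$, proves $\mathcal F=\mathcal F_A\circ R$ on that class (Proposition \ref{prop:FAR=F}, which requires an approximation argument, not just Fubini), decomposes the Radon transforms into $K$-types $(Rf)^\delta_{i,j}$, checks that each pair $\bigl((Rf)^\delta_{i,j},(Ru_{t_0})^\delta_{i,j}\bigr)$ inherits the Euclidean Beurling--Schr\"odinger condition of Corollary \ref{cor:BeurlingSchrLuno} (with damping parameter $|\rho|^2$), and then invokes the Bagchi--Ray theorem on $\R^n$ rather than reproving it. To salvage your write-up, develop that reduction in full --- in particular the $L^1(X)_C$ machinery and the $K$-type bookkeeping needed in the non-$K$-invariant setting --- in place of the Phragm\'en--Lindel\"of sketch.
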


The proof of Theorem \ref{thm:BeurlingSchr} is given in section \ref{subsection:BeurlingX}. It is based on a reduction to a Euclidean situation by means of the Radon transform. This is a powerful technique that is commonly used to attack problems related to the uncertainty principle on symmetric spaces as well as on Euclidean spaces; see for instance \cite{Sengupta2000, Sengupta2002, Sarkar-Sengupta-Canadian, SS}. 
In our situation, the important feature is that for functions satisfying (\ref{eq:BeurlingSchr}), the composition of the Radon transform and a Euclidean Fourier transform gives the Helgason-Fourier transform. A Beurling theorem for the Helgason-Fourier transform has been proved by Sarkar and Sengupta (see \cite{SS}, Theorem 4.1), under the assumption $f \in L^1(X) \cap L^2(X)$. However, unlike the Euclidean case treated in section \ref{section:Euclidean} below, we cannot deduce our uniqueness theorem from Beurling's theorem on Riemannian symmetric spaces. In fact, in the Euclidean situation, the moduli of the initial value function $f(x)$ and of the solution $u(t,x)$ are (up to constants) equal to those of a Fourier transform pair $(h,\widehat{h})$. So Beurling's theorem for $(h,\widehat{h})$ yields the required uniqueness for the
Schr\"odinger solution. See Corollary \ref{cor:BeurlingSchrR}. A similar property does not hold for general Riemannian symmetric spaces. This is a consequence of the lack of duality between the Helgason-Fourier transform and its inverse. One can in part recover this duality for $K$-invariant functions on Riemannian symmetric spaces $G/K$ with $G$ complex. In this case, one can work on a maximally flat geodesic submanifold, where the Helgason-Fourier transform reduced essentially to a Euclidean Fourier transform. This is the approach used in \cite{Chanillo}.

The classical version of Beurling's theorem on $\R^n$ is among the strongest qualitative uncertainty principles. By this, we mean theorems which allow to conclude that $f=0$ by giving quantitative conditions on $f$ and its Fourier transform $\widehat{f}$. For instance, Beurling's theorem implies
the qualitative uncertainty principles of Gelfand-Shilov, Cowling-Price and Hardy. Following this line, in section \ref{section:applications} we prove that the uniqueness property stated in Theorem \ref{thm:BeurlingSchr} implies uniqueness properties respectively of Gelfand-Shilov, Cowling-Price and Hardy type, for the solution of the Schr\"odinger equation on a Riemannian symmetric space.

Finally, we study more closely the Hardy type conditions. Let $\alpha$ and $\beta$ be the exponents measuring respectively the very rapid decay of the initial condition $f$ and the solution $u_{t_0}$ at a time $t_0>0$. Then the uniqueness property is proven to hold when the condition $16t_0^2\alpha\beta >1$ is satisfied. The value of $t_0$ is shown to be optimal. More precisely,
in Theorem \ref{thm:complexnonunique} we prove that, under the additional assumption that $G$ admits a complex structure, the $K$-invariant initial conditions $f$ for which the uniqueness property fails 
in the case $16t_0^2\alpha\beta=1$ are precisely the constant multiples of an explicitly given function.

\bigskip

\textit{Acknowledgements:\; }  
This paper was partly written when the authors were visiting the Scuola Normale Superiore di Pisa and the Department of Mathematics
of the IISc Bangalore. The authors would like to express their gratitude to Micheal Cowling and Fulvio Ricci for the invitation to 
the Intensive Research Period ``Euclidean Harmonic Analysis, Nilpotent Lie Groups and PDEs'' and to the Centro di Ricerca Matematica Ennio de Giorgi for financial support. They also thank E.K. Narayanan, A. Sitaram and S. Thangavelu for their hospitality and financial support. The second named author also acknowlegdes financial support from the Chennai Mathematical Institute;
the first named author would like to thank the Indo-French Institute for Mathematics for travel support. Finally, we would like to express our thanks to S. Chanillo, P. Ciatti, M. Cowling, N. Lohou\'e, E.K. Narayanan and F. Ricci for discussions on the subject of this paper. We would also like to thank the referee for a very careful reading of our manuscript, constructive suggestions and some simplifications of our original arguments.

\section{Notation and preliminaries}

\subsection{Riemannian symmetric spaces of the noncompact type and their structure}
Let $X$ be a Riemannian symmetric space of the noncompact type. Then $X=G/K$ where $G$ is a noncompact, connected, semisimple, real Lie group $G$ with finite center and $K$ is a maximal compact subgroup of $G$. Let $\frak g$ be the Lie algebra of $G$ and let $\frak k$ ($\subset \frak g$) be the Lie algebra of $K$. We have the  Cartan decomposition $\frak g=\frak k\oplus \frak p$, where $\frak p$ is the subspace of $\frak g$ which is orthogonal to $\frak k$ with respect to the Killing form $B$ of $\frak g$. Then $\frak p$ can be identified with the tangent space to $X$ at the base point $o=eK$. We fix a maximal abelian subspace $\frak a$ of $\frak p$. We denote by $\frak a^*$ the (real) dual space of $\frak a$ and by $\frak a_\C^*$ its complexification. The Killing form $B$ is a (positive definite) inner product on $\frak p$ and hence on $\frak a$. For $H_1,H_2 \in \frak a$ we set $\inner{H_1}{H_2}:=B(H_1,H_2)$ and 
$|H_1|:=\inner{H_1}{H_1}^{1/2}$. We extend this inner product to $\frak a^*$ by duality by setting
$\inner{\lambda}{\mu} := \inner{H_\lambda}{H_\mu}$ if $H_\lambda$ is the unique element of $\frak a$ so that $\inner{H}{H_\lambda} = \lambda(H)$ for all $H \in \frak a$. The $\C$-bilinear extension of 
$\inner{\cdot}{\cdot}$ to $\frak a_\C^*$ will be indicated by the same symbol. 

The set of (restricted) roots of the pair $(\frak g,\frak a)$ is
denoted by $\Sigma$. It consists of all $\alpha \in \frak a^*$ for
which the vector space $\frak g_{\a} := \{X \in \frak g:
\text{$[H,X]=\a(H)X$  for every $H \in \frak a$}\}$ contains nonzero
elements. The dimension $m_{\a}$ of $\frakg_{\a}$ is called the
multiplicity of the root $\alpha$. We shall adopt the convention that 
$m_{2\a}=0$ if $2\alpha$ is not a root. The subset of $\frak a$ on which all
roots vanish is a finite union of hyperplanes. We can therefore
choose $Y \in \frak a$ so that $\alpha(Y)\neq 0$ for all $\alpha \in
\Sigma$. Put $\Sigma^+:=\{\alpha\in\Sigma:\alpha(Y)>0\}$. Then 
$\Sigma^+$ is a system of positive roots, and $\Sigma$ is the
disjoint union of $\Sigma^+$ and $-\Sigma^+$.  
A root $\a\in \Sigma$ is said to be indivisible if $\a/2 \notin
\Sigma$. We denote by $\Sigma_0$ the set of indivisible roots and 
set $\Sigma_0^+:=\Sigma^+ \cap \Sigma_0$.
The half-sum of the positive roots counted with
multiplicites is denoted by $\rho$: hence
\begin{equation}\label{eq:rho}
\rho=\frac{1}{2}\, \sum_{\alpha\in\Sigma^+} m_\alpha \alpha\,.
\end{equation}

The set $\frak a^+:=\{H\in \fa: \text{$\alpha(H)>0$ for all $\alpha\in
\Sigma^+$}\}$ is an open polyhedral cone called the positive Weyl
chamber. The corresponding Weyl chamber in $\frak a^*$ is 
$\frak a^*_+:=\{\l\in \frak a^*: \text{$\inner{\l}{\a}>0$ for all $\alpha\in
\Sigma^+$}\}$.

Let $\exp:\frak g \to G$ be the exponential map of $G$, and set 
$A := \exp \frak a$ and $A^+ := \exp \frak a^+$. Then $A$ is an abelian subgroup of $G$ that is 
diffeomorphic to $\frak a$ under $\exp$. 
The inverse diffeomorphism is denoted by $\log$. 

The Weyl group $W$ of the pair $(\frak g,\frak a)$ is the finite group of
orthogonal transformations of $\frak a$ generated by the reflections 
$r_\a$ with $\a\in \Sigma$, where
\begin{equation*}
r_\a(H):=H-2\frac{\a(H)}{\inner{\a}{\a}}H_\a\,, \qquad H \in \frak a\,.
\end{equation*}
The Weyl group action extends to $A$ via the exponential map, to $\frak a^*$ by duality,
and to $\frak a_\C^*$ by complex linearity.

Set $\frak n:=\oplus_{\a\in \Sigma^+} \frak g_\a$. 
Then $N := \exp \frak n$ is a simply connected nilpotent subgroup of $G$. The 
subgroup $A$ normalizes $N$. 
The map $(k, a, n) \mapsto kan$ is an analytic diffeomorphism of the product 
manifold $K \times A \times N$ onto $G$.
The resulting decomposition $G = KAN=KNA$ is called the Iwasawa decomposition 
of $G$. Thus, for $g \in G$
we have $g=k(g) \exp H(g) n(g)$ for uniquely determined $k(g)\in K$, $H(g) \in \frak a$ 
and $n(g)\in G$. 
One can prove that 
\begin{equation}\label{eq:estimateH}
 |H(ak)| \leq  |\log a|
\end{equation}
for all $a \in A$ and $k \in K$. 

Let $M$ be the centralizer of $A$ in $K$ and set $B = K/M$. 
Then the map $A : X \times B \to \frak a$ is defined by 
\begin{equation}\label{eq:A}
A(gK, kM) := -H(g^{-1}k)\,. 
\end{equation}
Besides the Iwasawa decomposition, we will also need the polar decomposition $G=KAK$: 
every $g\in G$ can be written in the form $g=k_1 a k_2$ with $k_1,k_2 \in K$ and $a \in A$.
The element $a$ is unique up to $W$-invariance.

\subsection{Normalization of measures and integral formulas}
We shall adopt the normalization of measures as in \cite{He3}, Ch. II, \S 3.1. In particular, the Haar 
measures $dk$ and $dm$ on the compact groups $K$ and $M$ are normalized to have total mass 1.
The Haar measures $da$ and $d\l$ on $A$ and $\frak a^*$, respectively, are normalized so that 
the Euclidean Fourier transform 
\begin{equation} \label{eq:FourierA}
(\mathcal F_Af)(\l):=\int_A f(a) e^{-i\l(\log a)} \; da\,, \qquad \l\in\frak a^*\,,
\end{equation}
of a sufficiently regular function $f:A \to \C$ is inverted by 
\begin{equation} \label{eq:invFourierA}
f(a)=\int_{\frak a^*} (\mathcal F_Af)(\l) e^{i\l(\log a)} \; d\l\,, \qquad a\in A\,.
\end{equation}
The Haar measures $dg$ and $dn$ of $G$ and $N$, respectively, are normalized so that 
$dg=e^{2\rho(\log a)} \; dk\, da\,dn$\,.
Moreover, if $U$ is a Lie group and $P$ is a closed subgroup of $U$, with left Haar measures 
$du$ and $dp$, respectively, then the $U$-invariant measure $d(uP)$ on the homogeneous space 
$U/P$ (when it exists) is normalized by 
\begin{equation}
\int_U f(u)\;du=\int_{U/P} \left( \int_P f(up)\; dp\right)\,d(uP)\,. 
\end{equation}
This condition normalizes the $G$-invariant measure $dx=d(gK)$ on $X=G/K$ and fixes the $K$-invariant 
measure $db=d(kM)$ on $B=K/M$ to have total mass 1.

Corresponding to the Iwasawa and the polar decompositions, we have the integral formulas 
\begin{align}
\int_X f(x)\; dx&=\int_G f(g\cdot o) \; dg =\int_K \int_A \int_N f(kan \cdot o)
e^{2\rho(\log a)} \; dk\, da\, dn  \label{eq:intformulaIwasawa}\\
                &=c \int_K \int_{A^+} f(ka \cdot o) \delta(a)\; dk\,da\, \label{eq:intformulapolar} 
\end{align}
where 
\begin{equation} \label{eq:delta}
\delta(\exp H)=\prod_{\a\in \Sigma^+} (\sinh \a(H))^{m_\a}
=e^{2\rho(H)} \prod_{\a\in \Sigma^+} \big(\frac{1-e^{-2\a(H)}}{2}\big)^{m_\a}
\,, \qquad H \in \frak a^+
\end{equation}
and $c$ is a suitable positive constant.

\subsection{Invariant differential operators on $X$ and the Laplace-Beltrami operator}
Let $\mathbb D(X)$ denote the commutative algebra of invariant differential operators on $X$ which are invariant under the action of $G$ on $X$ by left translations. 
As a Riemannian manifold, a symmetric space of the noncompact type $X=G/K$ is endowed with a Laplace-Beltrami operator $\Delta$ defined by $\Delta f=\diver \circ \grad f$ for $f \in C^\infty(X)$.
It turns out that $\Delta$ is a self-adjoint differential operator on $L^2(X)$ belonging to $\mathbb D(X)$. 

Let $\l \in \mathfrak a_\C^*$ and $b \in B$. Then the function $e_{\l,b}:X \to \C$ defined by
\begin{equation} 
e_{\l,b}(x)=e^{(i\l +\rho)(A(x,b))}\,, \qquad x \in X\,,
\end{equation}
is an eigenfuntion of $\Delta$. In fact, 
\begin{equation} \label{eq:Deltae}
\Delta e_{\l,b}=-(\inner{\l}{\l}+\inner{\rho}{\rho})e_{\l,b}\,.
\end{equation}  
We refer to \cite{He2}, Ch. II, 
for additional information. 

\subsection{$K$-invariant functions} \label{subsection:invariantfunctions}
Functions $f:X\to \C$ can be identified with right $K$-invariant functions on $G$, i.e. with 
functions $f:G\to \C$ so that $f(gk)=f(g)$ for all $g \in G$ and $k \in K$. Likewise, $K$-invariant functions on 
$X$ can be identified with $K$-bi-invariant functions on $G$, i.e. with 
functions $f:G\to \C$ so that $f(k_1 g k_2)=f(g)$ for all $g\in G$ and $k_1,k_2 \in K$. 
In turn, because of the polar decomposition $G=KAK=K\overline{A^+}K$, 
a $K$-bi-invariant function on $G$ can be identified with its $W$-invariant restriction to $A$
or with its restriction to $\overline{A^+}$. 
The Riemannian distance from the origin $o$, defined by $\sigma(x)=d(x,o)$, is an 
example of $K$-invariant map on $X$. 
According to the above identifications, we shall sometimes write $\sigma(g)$ instead of $\sigma(g\cdot o)$ for $g \in G$. 
If $x=g \cdot o$ and $g=k_1 \exp H
k_2$ with $g \in G$, $H \in \frak a$ and $k_1, k_2 \in K$, then 
\begin{equation}\label{eq:sigma}
\sigma(x)=\sigma(g)=\sigma(\exp H)=|H|. 
\end{equation}
The function $\sigma$ satisfies the following inequalities:
\begin{align}
\label{eq:trsigma}
\sigma(gh)&\leq \sigma(g)+\sigma(h)\,, \qquad g,h \in G\,,\\ 
\label{eq:ineqsigma}
\sigma(an)&\geq \sigma(a)\,, \qquad a \in A\,, n \in N\,.
\end{align}

Another $K$-invariant function on $X$ we shall employ is the spherical function $\Xi$ of spectral parameter $0$.
It is defined by 
\begin{equation}\label{eq:Xi}
 \Xi(x)=\int_B e_{0,b}(x)\, db =\int_K e^{-\rho(H(gk))}\; dk\,, \qquad x=g\cdot o \in X.
\end{equation}

It is a real analytic function and, writing $\Xi(g)$ instead of $\Xi(g\cdot o)$, it satisfies the following properties:
\begin{align}
&0 < \Xi(g)=\Xi(g^{-1}) \leq 1\,, \qquad g \in G\,, \label{eq:initialestXi}\\
&\int_K \Xi(g_1 k g_2)\; dk=\Xi(g_1)\Xi(g_2)\,, \qquad g_1\,,g_2 \in G\,, \label{eq:functeqXi}\\
&e^{-\rho(H)} \leq \Xi(a)\leq e^{-\rho(H)} (1+|H|)^d\,, \qquad a=\exp H \in \overline{A^+}\,, \label{eq:estimateXi}
\end{align}
where $d=|\Sigma^+_0|$ is the cardinality of the set of positive indivisible roots. 
The properties of the functions $\sigma$ and $\Xi$ can be found in \cite{GV}, \S 4.6 and \S 6.2.

\subsection{The Helgason-Fourier transform and the Radon transform}
\label{subsection:Helgason-Fourier-Radon}

A reference for this section is \cite{He3}, Ch. III.
The Helgason-Fourier transform of a sufficiently regular function $f:X\to \C$ 
is the function $\mathcal Ff$ defined by
\begin{equation} \label{eq:F-AN}
\mathcal F{f}(\lambda,b)=\int_X f(x) e_{-\l,b}(x) \; dx=\int_{AN} f(kan\cdot o) e^{(-i\l+\rho)(\log a)}\; da\,dn
\end{equation}
for all $\l\in \mathfrak a_\C^*$ and $b=kM\in B$ for which this integral exists.
The Plancherel theorem states that the Helgason-Fourier transform $\mathcal F$
extends to an isometry of $L^2(X)$ onto $L^2(\frak a^*_+ \times B, |c(\l)|^{-2} db\,d\l)$. 
The function $c(\l)$ occurring in the Plancherel density is Harish-Chandra's $c$-function.
It is the meromorphic function on $\frak a_\C^*$ given explicitly by the Gindikin-Karpelevich product formula:
\begin{equation}
\label{eq:c}
c(\l)=c_0 \prod_{\a\in \Sigma_0^+} c_\a(\l)
\end{equation}
where
\begin{equation}
\label{eq:ca}
c_\a(\l)= \frac{2^{-i\la} \; \Gamma(i\la)}
{\Gamma\Big(\frac{i\la}{2}+\frac{m_\a}{4}+\frac{1}{2}\Big)
\Gamma\Big(\frac{i\la}{2}+\frac{m_\a}{4}+\frac{m_{2\a}}{2}\Big)}
\end{equation}
and the constant $c_0$ is given by the condition $c(-i\rho)=1$.
In (\ref{eq:ca}) we have employed the notation
\begin{equation}\label{eq:la}
\l_\a:=\frac{\inner{\l}{\a}}{\inner{\a}{\a}}\,
\end{equation}
for $\a\in \Sigma$ and $\l\in\frak a_\C^*$.

Observe that if $f \in L^2(X)$ then, by the Plancherel theorem, for almost all $\l \in \mathfrak a^*$ we have 
$\mathcal Ff(\l,\cdot) \in L^2(B)$.

The Helgason-Fourier transform $\mathcal Ff$ of a function $f \in L^1(X)$ is almost everywhere defined. 
More precisely, there exists a subset $B' \subset B$ (depending on $f$) with $B \setminus B'$ of zero measure, such that $\mathcal Ff(\l,b)$ is defined for each $\l \in \frak a^*$. In fact, $\mathcal Ff(\l,b)$ is defined and holomorphic for $\l$ in a small tube domain around $\frak a^*$ in $\frak a_\C^*$.
It turns out that $\mathcal F$ is injective on $L^1(X)$. Furthermore, 
as in the Euclidean case, there is an inversion formula for functions $f \in L^1(X)$ with 
$\mathcal F f \in L^1(\frak a^* \times B, |c(\l)|^{-2} db\,d\l)$: for almost all $x \in X$,
we have 
\begin{equation} \label{eq:inversionF}
f(x)=\frac{1}{|W|} \int_{\frak a^* \times B} \mathcal Ff(\l,b) e_{\l,b}(x) \;
\frac{db\,d\l}{|c(\l)|^{2}}\,,
\end{equation}
where $|W|$ denotes the cardinality of the Weyl group $W$. 

By identifying the space $G/MN$ of horocycles on $X$ with $B \times A$, one can define the 
Radon transform of a 
sufficiently regular function $f:X\to \C$ as the function $Rf$ defined by
\begin{equation}
Rf(b,a)=e^{\rho(\log a)}\int_N f(kan \cdot o)\; dn
\end{equation}
for all $b=kM\in B=K/M$ and $a \in A$ for which this integral exists. 
See \cite{He3}, p. 220 or \cite{Eguchi-Lectures}. If $f \in L^1(X)$, 
then the integral defining $Rf(b,a)$ converges absolutely for almost all 
$(b,a) \in B \times A$ and $Rf \in L^1(B \times A)$. Indeed 
$\|Rf\|_{L^1(B \times A, db\,da)}\leq |W| \|f\|_1$; see \cite{Eguchi-Lectures}, Lemma 3.2.
By \cite{He3}, Ch. II, Theorem 3.2, the Radon transform is injective on $L^1(X)$.
Notice that if $Rf \in L^1(B \times A)$ then for almost all $a \in A$ we have $Rf(\cdot,a) \in L^1(B)$.

The Helgason-Fourier transform of a sufficiently regular function $f$ is the Euclidean Fourier transform of the Radon 
transform. For instance, if $f\in L^1(X)$, then  
\begin{equation}
\mathcal Ff(\l,b)=\mathcal F_A \big(Rf(b,\cdot)\big)(\l)=\int_A Rf(b,a) e^{-i\l(\log a)} \;da\,.
\end{equation}
for almost all $b\in B$ and all $\l \in \frak a^*$.
See e.g. the proof of formula (43), Ch. III, \S 1 in \cite{He3}.

\subsection{Analysis on $B=K/M$} 
\label{subsection:widehatK}

Let $\widehat{K}$ denote the set of (equivalence classes of) irreducible unitary representations of $K$. Fix $\delta \in \widehat{K}$ acting on the space $V_\delta$ of finite dimension $d(\delta)$. We consider $V_\delta$ endowed with an inner product $\inner{\cdot}{\cdot}$ making $\delta$ unitary. 
Let $M=Z_K(A)$ be the centralizer of $A$ in $K$. A vector $v \in V_\delta$ is said to be $M$-fixed if $\delta(m)v=v$ for all $m \in M$. Let $V_\delta^M$ denote the subspace of $M$-fixed vectors of $V_\delta$. We denote by 
$\widehat{K}_M$ the subset of $\widehat{K}$ consisting of (equivalence classes of) representations of $K$ for which $\dim V_\delta^M>0$. 

As before, let $B=K/M$. For $f:B \to \C$ sufficiently regular, we define $f^\delta: B \to \Hom(V_\delta,V_\delta)$
by 
\begin{equation}\label{eq:fdelta}
f^\delta(kM)=d(\delta) \int_K \delta(k_1^{-1}) f(k_1 kM)\; dk_1\,, \qquad k \in K\,.
\end{equation}
Then 
\begin{equation}\label{eq:fdeltabis}
f^\delta(kM)=\delta(k)f^\delta(eM)
\end{equation}
for all $k \in K$.
In particular, $f^\delta=0$ for $\delta \notin \widehat{K}_ M$.

The functions $f^\delta$ determine $f \in L^2(B)$  by the Peter-Weyl theorem for vector-valued functions, stating that 
\begin{equation} \label{eq:vectorPeterWeyl}
f=\sum_{\delta\in \widehat{K}_M} {\mathrm{Trace}}(f^\delta)\,.
\end{equation}
See \cite{He2}, Ch. V, Corollary 3.4. More generally, (\ref{eq:vectorPeterWeyl}) holds in the sense of distributions when $f \in L^1(B)$. See \cite{He2}, p. 508. Let  $v_1,\dots, v_{d(\delta)}$ be an orthonormal basis of $V_\delta$. For $i,j=1,\dots,d(\delta)$, define $f^\delta_{i,j}\in \C$ by
\begin{equation}\label{eq:deltafij}
f^\delta_{i,j}=d(\delta)^{-1} \inner{f^\delta(eM)v_i}{v_j}= \int_K \inner{\delta(k^{-1})v_i}{v_j} f(kM)\; dk\,.
\end{equation}
Observe that, by (\ref{eq:fdeltabis}) and (\ref{eq:deltafij}), we have $f^\delta=0$ if and only if $f^\delta_{i,j}=0$ for all 
$i,j=1,\dots,d(\delta)$. Moreover, by (\ref{eq:vectorPeterWeyl}), $f=0$ if and only if $f^\delta=0$ for all $\delta \in \widehat{K}_M$.

\section{The Euclidean case}
\label{section:Euclidean}

\subsection{The damped Schr\"odinger equation on $\R^n$}

Let $\mathcal F_{\R^n}$ denote the Fourier transform on $\R^n$. For a (sufficiently regular) 
function $f$ on $\R^n$, we also write $\widehat{f}=\mathcal F_{\R^n}f$.
We fix as a measure on $\R^n$ the Lebesgue measure divided by the factor $(2\pi)^{n/2}$. 

Consider the initial value problem for the time-dependent damped Schr\"odinger equation on $\R^n$:
\begin{equation} 
\label{eq:Schroedinger}
\tag{${\rm S}_c$}
\begin{split}
&i\partial_t u(t,x)+(\Delta -c)u(t,x)=0\\
&u(0,x)=f(x)
\end{split}
\end{equation}
where $\Delta$ denotes the Laplace operator on $\R^n$ and $c\in \R$ is the damping parameter.
Suppose $f \in L^2(\R^n)$. Then there is a unique $u \in C(\R:L^2(\R^n))$ satisfying (\ref{eq:Schroedinger}) in the sense of distributions and such that $u(0,\cdot)=f$. 
In fact, let $|\l|$ denote the euclidean norm of $\l \in \R^n$.
The Fourier transform $\mathcal F_{\R^n}$
gives a unitary equivalence of $\Delta$ with the multiplication operator $M$ on $L^2(\R^n)$
defined by $(Mf)(\l)=-|\l|^2 f(\l)$. It follows that the unique solution 
$u_t(x)=u(t,x)$ of (\ref{eq:Schroedinger}) is characterized by the equation

\begin{equation} \label{eq:functionaleqsolutionSR}
\widehat{u_t}(\l)= e^{-i(|\l|^2+c)t} \widehat{f}(\l)\,.
\end{equation}
Equivalently, $u_t=\mathcal F_{\R^n}^{-1}\big( e^{-i(|\l|^2+c)t} \mathcal F_{\R^n} f\big)$.
See for instance \cite{Rauch}, \S 3.5, Theorem 1 and Corollary 2.

We will need precise information on the solution of (\ref{eq:functionaleqsolutionSR}) in the space $\mathcal S'(\R^n)$ of tempered distributions on $\R^n$ for functions $f$ which are not necessarily in $L^2(\R^n)$. 

Let $O_M(\R^n)$ denote the space of functions $\varphi$ on $\R^n$ which are $C^\infty$ with slow growth (i.e. each derivative of $\varphi$ is bounded by a polynomial), and let 
 $O'_C(\R^n)$ be the space of rapidly decreasing distributions $T$ on $\R^n$ (i.e.  $T \in \mathcal S'(\R^n)$ and $T \ast \varphi\in 
\mathcal S(\R^n)$ for all $\varphi \in \mathcal S(\R^n)$).
See \cite{Schwartz}, 
p. 243--245. 
Then the Fourier transform is an isomorphism of $O_M(\R^n)$ onto $O'_C(\R^n)$.
For every fixed $a \in \R$, we have $e^{-ia|x|^2}\in O_M(\R^n) \cap O'_C(\R^n)$. 
The following properties hold. 

\begin{Lemma}
\label{lemma:convSchwartz}
If $S \in O_M(\R^n)$, $T \in O'_C(\R^n)$ and $F \in \mathcal S'(\R^n)$, then
\begin{enumerate}
\thmlist
\item $SF \in \mathcal S'(\R^n)$ and $T \ast F \in \mathcal S'(\R^n)$\,.
\item $\widehat{SF}=\widehat{S}\ast\widehat{F}$ and $\widehat{(T \ast F)}=\widehat{T}\widehat{F}$\,.
\end{enumerate}
\end{Lemma}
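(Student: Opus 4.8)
The plan is to treat the statement as a standard compatibility result between the Fourier transform, multiplication by a slowly growing smooth function, and convolution by a rapidly decreasing distribution, all on the level of tempered distributions. The two halves of the lemma are Fourier-dual to one another: the multiplication statement $SF\in\mathcal S'(\R^n)$ with $\widehat{SF}=\widehat S\ast\widehat F$ is carried by the Fourier transform to the convolution statement $T\ast F\in\mathcal S'(\R^n)$ with $\widehat{T\ast F}=\widehat T\widehat F$, using that $\mathcal F_{\R^n}$ is an isomorphism of $O_M(\R^n)$ onto $O'_C(\R^n)$ (already recalled in the text) and an isomorphism of $\mathcal S'(\R^n)$ onto itself. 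So I would prove one half carefully and deduce the other by duality, being careful about the placement of constants coming from the normalization of the measure by $(2\pi)^{n/2}$.

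First I would establish that $S\in O_M(\R^n)$ and $F\in\mathcal S'(\R^n)$ imply $SF\in\mathcal S'(\R^n)$: for $\varphi\in\mathcal S(\R^n)$ one has $S\varphi\in\mathcal S(\R^n)$ because each derivative of $S\varphi$ is, by Leibniz, a finite sum of products of derivatives of $S$ (polynomially bounded) with derivatives of $\varphi$ (rapidly decreasing), hence rapidly decreasing; so the pairing $\langle SF,\varphi\rangle:=\langle F,S\varphi\rangle$ is well defined and continuous, and the multiplication map $\varphi\mapsto S\varphi$ is continuous on $\mathcal S(\R^n)$, giving $SF\in\mathcal S'(\R^n)$. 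Next, for the convolution $T\ast F$ with $T\in O'_C(\R^n)$: by definition $T\ast\varphi\in\mathcal S(\R^n)$ for $\varphi\in\mathcal S(\R^n)$, and (using $\check\varphi(x)=\varphi(-x)$) one sets $\langle T\ast F,\varphi\rangle:=\langle F,\check T\ast\varphi\rangle$; one checks $\check T\in O'_C(\R^n)$, that $\varphi\mapsto\check T\ast\varphi$ is continuous on $\mathcal S(\R^n)$, and hence $T\ast F\in\mathcal S'(\R^n)$. This proves part (a).

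For part (b), the identity $\widehat{SF}=\widehat S\ast\widehat F$ (with the appropriate constant absorbed into our normalization of the measure) I would first verify on the dense subspace $F\in\mathcal S(\R^n)$, where it is the classical exchange formula between multiplication and convolution and can be checked directly on test functions by Fubini; then extend to all $F\in\mathcal S'(\R^n)$ by continuity, using part (a) and the continuity of $\mathcal F_{\R^n}$, of multiplication by $S$, and of convolution by $\widehat S\in O'_C(\R^n)$, all on the relevant spaces, together with the density of $\mathcal S(\R^n)$ in $\mathcal S'(\R^n)$ in the weak-$*$ topology. Finally, applying $\mathcal F_{\R^n}$ to this identity with $S$ replaced by a function whose Fourier transform is $T$ (legitimate since $\mathcal F_{\R^n}:O_M\to O'_C$ is onto) yields $\widehat{(T\ast F)}=\widehat T\widehat F$. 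The main obstacle is purely bookkeeping: keeping track of the $(2\pi)^{n/2}$ normalization so that the exchange formulas hold \emph{without} stray constants, and making the continuity/density argument rigorous in the weak-$*$ topology on $\mathcal S'(\R^n)$ rather than hand-waving it; the underlying functional analysis is standard (see Schwartz, loc.\ cit.).
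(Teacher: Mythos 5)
Your outline is correct, but there is nothing in the paper to compare it against: the paper's entire proof of this lemma is the citation ``See Schwartz, Ch.~VII, \S 8, th\'eor\`eme~XV,'' and your argument is precisely the standard proof underlying that reference (define $SF$ and $T\ast F$ by transposition, verify the exchange formulas on $\mathcal S(\R^n)$, and extend by weak-$*$ continuity and density, using that $\mathcal F_{\R^n}$ maps $O_M(\R^n)$ isomorphically onto $O'_C(\R^n)$). The only step you gloss over is the continuity of $\varphi\mapsto \check T\ast\varphi$ on $\mathcal S(\R^n)$, which needs the structure theorem for $O'_C(\R^n)$ (or a closed-graph argument), but that is standard and within the tolerance of a sketch.
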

\begin{proof}
See \cite{Schwartz}, Ch. VII, \S 8, th\'eor\`eme XV.
\end{proof}

Let $c\in \R$ and $t\neq 0$ be fixed. 
In $\mathcal S'(\R^n)$, we have, with our normalisation of the Lebesgue measure:
$$\widehat{\gamma_{c,t}}(\l)=e^{-i(|\l|^2+c)t}$$
if
$$\gamma_{c,t}(x)=(2|t|)^{-n/2} e^{-ict}e^{-\pi i(\sign t)n/4} e^{i\frac{|x|^2}{4t}}\,,$$
where $\sign t:=t/|t|$.
See for instance \cite{HoermBook}, Theorem 7.6.1. Thus part (b) in Lemma \ref{lemma:convSchwartz}
gives for $F \in \mathcal S'(\R^n)$
$$e^{-i(|\l|^2+c)t} \widehat{F}=\widehat{\gamma_{c,t}}\widehat{F}=(\gamma_{c,t} \ast F)^\wedge\,.$$
Since the Fourier transform is injective on $\mathcal S'(\R^n)$, we obtain that the unique solution to the equation 
$$\widehat{u_t}=e^{-i(|\l|^2+c)t} \widehat{f}$$
in $\mathcal S'(\R^n)$ is 
\begin{equation}\label{eq:utgamma}
u_t=\gamma_{c,t} \ast f\,.
\end{equation}

Applying this property to the damped Schr\"odinger equation, we obtain in particular that, if $f \in L^1(\R^n)$, then for $t \neq 0$ the solution to (\ref{eq:Schroedinger}) can be written explicitly as 
\begin{equation}\label{eq:uh}
u(t,x)=(\gamma_{c,t} \ast f)(x)=(2\pi)^{n/2} 
(2|t|)^{-n/2} e^{-ict}e^{-\pi i(\sign t)n/4} e^{i\frac{|x|^2}{4t}} 
\; \widehat{h_t}\big(\frac{x}{2t}\big) 
\end{equation}
where 
\begin{equation}\label{eq:fh}
h_t(y)=e^{i\frac{|y|^2}{4t}}\; f(y)\,.
\end{equation} 
If $f \in L^2(\R^n)$, then (\ref{eq:uh}) and (\ref{eq:fh}) hold as equalities in $L^2(\R^n)$.

\subsection{Beurling-type uncertainty principles in $\R^n$}
\label{subsection:BeurlingEuclidean}
Beurling's theorem, proven by H\"ormander in \cite{Hoerm}, provides one of the strongest quantitative versions
of the uncertainty principles for the Fourier transform on $\R$.  
\begin{Thm}[Beurling's theorem] \label{thm:BeurlingR}
Let $f \in L^1(\R)$. If 
$$\int_\R \int_\R |f(x)||\widehat{f}(y)| e^{|xy|}\; dx\,dy <\infty$$
then $f=0$ almost everywhere.
\end{Thm}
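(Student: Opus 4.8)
The plan is to show that the hypothesis forces $\widehat f$ to extend to an entire function which is then compelled to vanish; since the Fourier transform is injective on $L^1(\R)$, this yields $f=0$. Assume $f\neq 0$. By Tonelli's theorem the hypothesis can be rewritten as $\int_\R|\widehat f(y)|\,M(|y|)\,dy<\infty$, where $M(s):=\int_\R|f(x)|e^{s|x|}\,dx\in[0,\infty]$. The function $M$ is even and nondecreasing on $[0,\infty)$, and it is log-convex there by the Cauchy--Schwarz inequality. Tonelli applied in the other order also gives $\int_\R|f(x)|\,\widetilde M(|x|)\,dx<\infty$, where $\widetilde M(s):=\int_\R|\widehat f(y)|e^{s|y|}\,dy$.

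First I would separate off the degenerate case in which $\widehat f$ has bounded support. If $\widehat f$ vanishes outside a bounded interval, then, $\widehat f$ being continuous and nonzero, one has $\widetilde M(s)\geq c\,e^{\varepsilon s}$ for suitable $c,\varepsilon>0$, so the second integral above gives $\int_\R|f(x)|e^{\varepsilon|x|}\,dx<\infty$. Consequently $\widehat f$ extends holomorphically to the strip $|\Im\zeta|<\varepsilon$; being holomorphic there and of bounded support, it vanishes on an interval and hence identically, so $f=0$. I may therefore assume that $\widehat f$ does not have bounded support. Then $\widehat f$ is nonzero on an unbounded subset of $\R$, and since $\int_\R|\widehat f(y)|M(|y|)\,dy<\infty$ forces $M(|y|)<\infty$ for almost every such $y$, there is a sequence $s_n\to\infty$ with $M(s_n)<\infty$. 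As $M$ is nondecreasing this means $M(s)<\infty$ for every $s$, so the integral defining $\widehat f(\zeta)=\int_\R f(x)e^{-ix\zeta}\,dx$ converges for all $\zeta\in\C$ and exhibits $\widehat f$ as an entire function satisfying both the uniform bound $|\widehat f(x+iy)|\leq M(|y|)$ and $\int_\R|\widehat f(x)|M(|x|)\,dx<\infty$.

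The heart of the proof is now the following assertion: an entire function $g$ with $|g(x+iy)|\leq M(|y|)$ and $\int_\R|g(x)|M(|x|)\,dx<\infty$, for a log-convex nondecreasing weight $M$, must vanish identically. This is where the difficulty concentrates, because the vertical growth bound and the horizontal decay produced by the weighted integrability are exactly dual to one another. I would prove it by a Phragm\'en--Lindel\"of argument: the weighted integrability upgrades, via the growth bound and the sub-mean-value property of $\log|g|$, to a pointwise decay essentially like $|g(x)|\lesssim e^{-m(|x|)}$, with $m=\log M$, so that the entire function $\zeta\mapsto g(\zeta)\,g(i\zeta)$ is bounded on both the real and the imaginary axes. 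When $m$ grows at most quadratically this product has order at most $2$, and a Phragm\'en--Lindel\"of estimate in each quadrant (more generally, in sectors of opening dictated by the order of $M$) shows it is bounded on all of $\C$, hence constant by Liouville; the real-axis decay then forces the constant to be $0$, so $g\equiv 0$. Applying this to $g=\widehat f$ gives $\widehat f\equiv 0$ and therefore $f=0$.

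The main obstacle is precisely this Phragm\'en--Lindel\"of step, and the reason is that the estimates sit exactly at the borderline of what such arguments can decide. The Gaussian $f(x)=\widehat f(x)=e^{-x^2/2}$ realises equality in the growth/decay trade-off: for it the integrand of the hypothesis equals $e^{-(|x|-|y|)^2/2}$, which fails to be integrable only by the thinnest of margins. For that extremal the function $g(\zeta)g(i\zeta)$ has order exactly $2$ in a quadrant, the critical order for Phragm\'en--Lindel\"of in an opening of $\pi/2$; the weight $e^{|xy|}$ appearing in the statement is exactly strong enough to exclude this borderline case, so the comparison must be carried out with the sharp majorant $M$ in order to exploit it.
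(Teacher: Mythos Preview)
The paper does not actually prove this statement: it is quoted as a classical result, with the original proof attributed to H\"ormander \cite{Hoerm} and a simplified proof referenced in the appendix of \cite{BDJ}. So there is no in-paper argument to compare against; what follows measures your outline against those referenced proofs.

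Your reduction is correct and coincides with H\"ormander's set-up: $M(s)<\infty$ for every $s$, $\widehat f$ extends to an entire function with $|\widehat f(x+iy)|\le M(|y|)$, and $\int_\R|\widehat f(x)|\,M(|x|)\,dx<\infty$; the bounded-support case is handled correctly. The genuine gap is in the step you yourself flag as ``the main obstacle,'' and it is twofold. First, the claimed upgrade from weighted integrability on $\R$ to a pointwise bound $|g(x)|\lesssim e^{-m(|x|)}$ ``via the sub-mean-value property of $\log|g|$'' is unjustified: averaging $\log|g|$ over a disc centred on the real axis only feeds back the \emph{upper} bound $\log M(|\Im z|)$, which contributes growth, not decay; nothing in that sentence converts $\int|g|M<\infty$ into a pointwise $1/M$ estimate. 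Second, your Phragm\'en--Lindel\"of step for $g(\zeta)g(i\zeta)$ in quadrants is conditioned on ``when $m$ grows at most quadratically,'' but nothing you have established bounds the growth of $M$; from the definition alone one can have, e.g., $|f(x)|\asymp e^{-|x|^{3/2}}$ and then $M(s)\asymp e^{cs^{3}}$. Since in a sector of opening $\pi/2$ the critical order for Phragm\'en--Lindel\"of is exactly $2$, you are left at or beyond the borderline you describe, with no argument supplied. In H\"ormander's proof this is bypassed not by sectorial Phragm\'en--Lindel\"of but by passing to the even part, writing $g(z)=h(z^{2})$, and exploiting subharmonicity of $\log|h|$ together with the \emph{coupled} information on the two halves of the real axis for $h$; the argument in \cite{BDJ} is a streamlining of the same idea. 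Your outline reaches the correct doorstep but does not supply the device that steps through it.
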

A simplified proof of Theorem \ref{thm:BeurlingR} can be found in the appendix of \cite{BDJ}.
Some higher dimensional versions of this theorem have been proven recently by Bagchi and Ray. 
One of these versions is the following theorem. See \cite{BR}, Corollary 1. 
\begin{Thm}\label{thm:BR}
 Let $f \in L^1(\R^n)$. Suppose that 
$$\int_{\R^n} \int_{\R^n} |f(x)||\widehat{f}(y)| e^{|x||y|}\; dx\,dy <\infty\,.$$
Then $f=0$ almost everywhere.
\end{Thm}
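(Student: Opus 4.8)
The plan is to reduce the $n$-dimensional statement to the one-dimensional Beurling theorem (Theorem~\ref{thm:BeurlingR}) by the classical method of slicing along lines through the origin, i.e.\ passing to the Radon transform in $\R^n$ and using the projection-slice theorem. Fix a unit vector $\omega\in S^{n-1}$ and, for $s\in\R$, let $g_\omega(s)$ be the integral of $f$ over the affine hyperplane $\{x:\inner{x}{\omega}=s\}$; this is the Radon transform $Rf(\omega,s)$. Since $f\in L^1(\R^n)$, Fubini guarantees that for almost every $\omega$ the function $s\mapsto g_\omega(s)$ is defined, lies in $L^1(\R)$, and $\|g_\omega\|_{L^1(\R)}\le\|f\|_{L^1(\R^n)}$. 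The projection-slice theorem gives $\widehat{g_\omega}(\tau)=\widehat{f}(\tau\omega)$ for $\tau\in\R$, where on the left is the one-dimensional Fourier transform.

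The key step is to show that the hypothesis $\int\!\int|f(x)||\widehat f(y)|e^{|x||y|}\,dx\,dy<\infty$ forces, for almost every $\omega$, the one-dimensional Beurling integral
\begin{equation}\label{eq:1Dbeurling}
\int_\R\int_\R |g_\omega(s)|\,|\widehat{g_\omega}(\tau)|\,e^{|s\tau|}\;ds\,d\tau<\infty
\end{equation}
to be finite. The bound $|\widehat{g_\omega}(\tau)|=|\widehat f(\tau\omega)|$ is immediate. For the $g_\omega$ factor one has $|g_\omega(s)|\le\int_{\inner{x}{\omega}=s}|f(x)|\,d\mathcal H^{n-1}(x)$, and crucially $|s|=|\inner{x}{\omega}|\le|x|$ on that hyperplane, so $|g_\omega(s)|\,e^{|s\tau|}\le\int_{\inner{x}{\omega}=s}|f(x)|e^{|x||\tau|}\,d\mathcal H^{n-1}(x)$. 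Integrating in $s$ (coarea) and then in $\tau$, and using $|\widehat f(\tau\omega)|=|\widehat{g_\omega}(\tau)|$ together with the change of variables $y=\tau\omega$ (so that the $\tau$-integral over $\R$ for fixed $\omega$ reassembles, after integrating $\omega$ over $S^{n-1}$, into the full integral over $\R^n$ up to a factor $|y|^{-(n-1)}$), one sees that
$$
\int_{S^{n-1}}\!\!\int_\R\!\int_\R |g_\omega(s)|\,|\widehat{g_\omega}(\tau)|\,e^{|s\tau|}\;ds\,d\tau\,d\omega
\;\le\; 2\int_{\R^n}\!\int_{\R^n}\frac{|f(x)|\,|\widehat f(y)|}{|y|^{n-1}}\,e^{|x||y|}\;dx\,dy.
$$
Here the weight $|y|^{-(n-1)}$ is harmless: the integrand $e^{|x||y|}$ already makes the hypothesis extremely strong, and in fact one argues that $\widehat f$ must vanish in a neighbourhood of the origin (otherwise the hypothesis integral diverges because of the slow decay of $\widehat f$ near $0$ against the non-integrable factor, a standard observation in this circle of results), so the singularity of $|y|^{-(n-1)}$ is not met; alternatively one splits the $y$-integral into $|y|\le1$ and $|y|>1$ and handles the first piece directly. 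Consequently the right-hand side is finite, so \eqref{eq:1Dbeurling} holds for almost every $\omega\in S^{n-1}$.

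Applying Theorem~\ref{thm:BeurlingR} to $g_\omega\in L^1(\R)$ for each such $\omega$ yields $g_\omega=0$ a.e.\ on $\R$, hence $\widehat{g_\omega}\equiv0$, i.e.\ $\widehat f(\tau\omega)=0$ for all $\tau\in\R$ and almost every $\omega$. Letting $\omega$ range over a full-measure subset of $S^{n-1}$ and using continuity of $\widehat f$ (valid since $f\in L^1(\R^n)$), we conclude $\widehat f\equiv0$ on $\R^n$, whence $f=0$ almost everywhere. I expect the main obstacle to be the careful bookkeeping in the coarea/polar-coordinate computation above, in particular justifying the Fubini interchanges and controlling the factor $|y|^{-(n-1)}$ near the origin; the one-dimensional input and the slicing identity themselves are standard. (This is exactly the argument of Bagchi--Ray, \cite{BR}, Corollary~1; we only sketch it here as we shall not need its full strength.)
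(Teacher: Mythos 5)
First, note that the paper does not prove this statement at all: it is quoted directly from Bagchi and Ray (\cite{BR}, Corollary 1), so there is no internal proof to compare yours against. Judged on its own terms, your slicing strategy --- reducing to the one-dimensional Beurling theorem via the Euclidean Radon transform and the projection-slice identity $\widehat{g_\omega}(\tau)=\widehat{f}(\tau\omega)$ --- is a legitimate and standard route to the $n$-dimensional statement, and your coarea/polar computation leading to the majorant $2\int\int |f(x)|\,|\widehat{f}(y)|\,|y|^{1-n}e^{|x||y|}\,dx\,dy$ is correct.

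However, your treatment of the singular factor $|y|^{1-n}$ contains a genuine error. The claim that the hypothesis forces $\widehat{f}$ to vanish in a neighbourhood of the origin is false: the hypothesis integral carries no singular weight near $y=0$ (the factor $e^{|x||y|}$ is harmless there), so nothing in the hypothesis rules out $\widehat{f}(0)=\int f\neq 0$; you are implicitly assuming a consequence of the conclusion you are trying to prove. Your alternative remedy (splitting the $y$-integral) is the right idea, but as stated it does not close: on $|y|\le 1$ you would need $\int |f(x)|e^{|x|}\,dx<\infty$, which is not given. The correct patch is as follows. If $\widehat{f}\equiv 0$ you are done. Otherwise, by continuity of $\widehat{f}$ there is a ball on which $|\widehat{f}(y)|\ge c>0$ and $|y|\ge\delta>0$, and Tonelli applied to the hypothesis over that ball yields $\int_{\R^n}|f(x)|e^{\delta|x|}\,dx<\infty$. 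Now split the $y$-integral at $|y|=\delta$: for $|y|<\delta$ use $|\widehat{f}(y)|\le C\|f\|_1$, $e^{|x||y|}\le e^{\delta|x|}$ and $\int_{|y|<\delta}|y|^{1-n}\,dy<\infty$; for $|y|\ge\delta$ use $|y|^{1-n}\le\delta^{1-n}$ together with the hypothesis. With this repair the triple integral over $S^{n-1}\times\R\times\R$ is finite, the one-dimensional Beurling theorem applies to $g_\omega$ for almost every $\omega$, and the conclusion follows by continuity of $\widehat{f}$ exactly as you indicate.
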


\begin{Cor}
\label{cor:BeurlingSchrLuno}
Suppose $f, u_t$ satisfy (\ref{eq:functionaleqsolutionSR}). If $f \in L^1(\R^n)$ and 
there is $t_0> 0$ so that
\begin{equation}\label{eq:BeurlingSR}
\int_{\R^n}\int_{\R^n} |f(x)| |u(t_0,y)|  e^{\frac{|x||y|}{2t_0}} \; dx\,dy < \infty
\end{equation}
then $f=0$. Hence $u(t,\cdot)=0$ for all $t \in\R$. 
\end{Cor}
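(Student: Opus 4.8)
The plan is to reduce condition (\ref{eq:BeurlingSR}) to the hypothesis of Beurling's theorem in $\R^n$ (Theorem \ref{thm:BR}) for an explicit Fourier transform pair, using the closed form of the Schr\"odinger solution already derived in (\ref{eq:uh})--(\ref{eq:fh}).

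First I would invoke (\ref{eq:uh}) and (\ref{eq:fh}): since $f \in L^1(\R^n)$, for the fixed time $t_0>0$ the solution is
\begin{equation*}
u(t_0,x)=(2\pi)^{n/2}(2t_0)^{-n/2} e^{-ict_0}e^{-\pi i n/4} e^{i\frac{|x|^2}{4t_0}}\,\widehat{h_{t_0}}\Bigl(\frac{x}{2t_0}\Bigr),\qquad h_{t_0}(y)=e^{i\frac{|y|^2}{4t_0}}f(y).
\end{equation*}
Taking moduli gives $|f(y)|=|h_{t_0}(y)|$ and $|u(t_0,x)|=(2\pi)^{n/2}(2t_0)^{-n/2}\,|\widehat{h_{t_0}}(x/(2t_0))|$. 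Note also that $h_{t_0}\in L^1(\R^n)$, because $|h_{t_0}|=|f|$, so $\widehat{h_{t_0}}$ is a genuine (continuous, bounded) function and Theorem \ref{thm:BR} will be applicable to $h_{t_0}$.

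The key step is the change of variables $x=2t_0\xi$ in the inner integral of (\ref{eq:BeurlingSR}). The exponent transforms as $\frac{|x||y|}{2t_0}=|y||\xi|$, and $dx=(2t_0)^n\,d\xi$, so (\ref{eq:BeurlingSR}) becomes, up to the harmless positive constant $(2\pi)^{n/2}(2t_0)^{n/2}$,
\begin{equation*}
\int_{\R^n}\int_{\R^n}|h_{t_0}(y)|\,\bigl|\widehat{h_{t_0}}(\xi)\bigr|\,e^{|y||\xi|}\;dy\,d\xi<\infty.
\end{equation*}
Then Theorem \ref{thm:BR} forces $h_{t_0}=0$ almost everywhere, hence $f=0$ almost everywhere. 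Finally, from the characterization (\ref{eq:functionaleqsolutionSR}) one has $\widehat{u_t}(\l)=e^{-i(|\l|^2+c)t}\widehat{f}(\l)=0$ for every $t$, so $u(t,\cdot)=0$ for all $t\in\R$.

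I do not expect a genuine obstacle here: the argument is just the observation that the moduli of $(f,u_{t_0})$ are, after rescaling, the moduli of the Fourier pair $(h_{t_0},\widehat{h_{t_0}})$, together with a bookkeeping check that the rescaling sends the Gaussian-type weight $e^{|x||y|/(2t_0)}$ exactly to the Beurling weight $e^{|y||\xi|}$. The only points requiring a word of care are that the explicit formula (\ref{eq:uh}) is being used in the regime where it holds ($f\in L^1$) and that $h_{t_0}\in L^1(\R^n)$ so that Theorem \ref{thm:BR} indeed applies; both are immediate.
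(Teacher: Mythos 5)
Your proposal is correct and follows essentially the same route as the paper: both pass from $(f,u_{t_0})$ to the Fourier pair $(h_{t_0},\widehat{h_{t_0}})$ via (\ref{eq:uh})--(\ref{eq:fh}), rescale by $2t_0$ to turn the weight $e^{|x||y|/(2t_0)}$ into $e^{|x||y|}$, and apply Theorem \ref{thm:BR}. The only (immaterial) difference is bookkeeping of the constant and which variable carries the substitution.
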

\begin{proof}
According to (\ref{eq:BeurlingSR}) and (\ref{eq:uh}), we have
\begin{align*}
+\infty > \int_{\R^n}\int_{\R^n} |f(x)| |u(t_0,y)| e^{\frac{|x||y|}{2t_0}} \; dx\,dy&= 
\frac{1}{(2t_0)^{n/2}} \, \int_{\R^n}\int_{\R^n} \big| h_{t_0}(x) \widehat{h_{t_0}}\big(\frac{y}{2t_0}\big)\big|  e^{\frac{|x||y|}{2t_0}} \; dx\,dy\\
&= (2t_0)^{n/2}\, \int_{\R^n}\int_{\R^n} |h_{t_0}(x)| |\widehat{h_{t_0}}(y)| e^{|x||y|} \; dx\,dy
\end{align*}
Theorem \ref{thm:BR} gives then $h_{t_0}=0$ and hence $f=0$. Thus $u(t,\cdot)=0$ for all $t\in \R$
by (\ref{eq:uh}).
\end{proof}

\begin{Cor}\label{cor:BeurlingSchrR}
Let $u(t,x) \in C(\R:L^2(\R^n))$ be the solution to the initial value problem 
(\ref{eq:Schroedinger}) with initial condition
$f \in L^2(\R^n)$. If there is $t_0> 0$ so that (\ref{eq:BeurlingSR}) holds.
Then $f=0$ and hence $u(t,\cdot)=0$ for all $t \in\R$. 
\end{Cor}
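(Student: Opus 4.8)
The plan is to deduce Corollary~\ref{cor:BeurlingSchrR} from its $L^1$ analogue, Corollary~\ref{cor:BeurlingSchrLuno}. The only difference between the two statements is that the latter assumes $f \in L^1(\R^n)$, so the whole point is to recover $f \in L^1(\R^n)$ from the hypothesis (\ref{eq:BeurlingSR}) itself --- after first stripping away two degenerate cases in which this extraction would break down.

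First I would dispose of the trivial cases. If $f = 0$ in $L^2(\R^n)$ there is nothing to prove. If $u(t_0,\cdot) = 0$ in $L^2(\R^n)$, then (\ref{eq:functionaleqsolutionSR}) reads $e^{-i(|\l|^2+c)t_0}\widehat f(\l) = 0$ for almost every $\l$; since the multiplier never vanishes, $\widehat f = 0$ almost everywhere, hence $f = 0$ by injectivity of the Fourier transform, and then $u(t,\cdot) = 0$ for all $t$, again by (\ref{eq:functionaleqsolutionSR}). So from that point on I assume that neither $f$ nor $u(t_0,\cdot)$ vanishes almost everywhere.

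Under that assumption, since $e^{|x||y|/(2t_0)} \geq 1$, Tonelli's theorem and (\ref{eq:BeurlingSR}) give
$$\|f\|_{L^1(\R^n)}\,\|u(t_0,\cdot)\|_{L^1(\R^n)} = \int_{\R^n}\!\int_{\R^n} |f(x)||u(t_0,y)|\;dx\,dy \;\leq\; \int_{\R^n}\!\int_{\R^n} |f(x)||u(t_0,y)|\, e^{\frac{|x||y|}{2t_0}}\;dx\,dy \;<\; \infty.$$
Both factors on the left-hand side are strictly positive, so both are finite; in particular $f \in L^1(\R^n) \cap L^2(\R^n)$. Now $f$ and $u_{t_0} = u(t_0,\cdot)$ satisfy (\ref{eq:functionaleqsolutionSR}) by the defining property of the $C(\R:L^2(\R^n))$-solution, one has $f \in L^1(\R^n)$, and (\ref{eq:BeurlingSR}) holds; these are exactly the hypotheses of Corollary~\ref{cor:BeurlingSchrLuno}, which therefore yields $f = 0$ and hence $u(t,\cdot) = 0$ for all $t \in \R$.

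The only step that is not routine bookkeeping is the passage from (\ref{eq:BeurlingSR}) to $f \in L^1(\R^n)$, and the subtlety there is precisely why the cases $f \equiv 0$ and $u(t_0,\cdot) \equiv 0$ must be excluded beforehand: the factorization $\int\!\int |f||u_{t_0}| = \|f\|_{L^1(\R^n)}\|u_{t_0}\|_{L^1(\R^n)}$ only forces each factor to be finite when neither of them is zero.
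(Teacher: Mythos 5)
Your proof is correct and follows essentially the same route as the paper: dispose of the case $u(t_0,\cdot)=0$ via (\ref{eq:functionaleqsolutionSR}), extract $f\in L^1(\R^n)$ from (\ref{eq:BeurlingSR}), and invoke Corollary \ref{cor:BeurlingSchrLuno}. The only (immaterial) difference is how $L^1$ membership is obtained: you bound $e^{|x||y|/2t_0}\ge 1$ and factor the double integral by Tonelli, whereas the paper fixes a point $y_0$ with $u(t_0,y_0)\neq 0$ and reads off $\int|f(x)|\,dx\le\int|f(x)|e^{|x||y_0|/2t_0}\,dx<\infty$ from the a.e.\ finiteness of the inner integral.
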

\begin{proof}
By Corollary \ref{cor:BeurlingSchrLuno}, it suffices to prove that $f \in L^1(\R^n)$.
If $u(t_0,\cdot)=0$ a.e., then $\widehat{f}=e^{i(|\l|^2+c)t_0} \widehat{u_{t_0}}=0$. Hence $f=0$. We can therefore suppose that $u(t_0,\cdot)$ is not a.e. zero.
By (\ref{eq:BeurlingSR}), we have
 \begin{equation} \label{eq:finiteBR}
|u(t_0,y)| \int_{\R^n} |f(x)| e^{\frac{|x||y|}{2t_0}} \; dx < \infty
\end{equation}
for almost all $y\in \R^n$. Since $u_{t_0}$ does not vanish almost everywhere, (\ref{eq:finiteBR}) gives for some $y_0\in \R^n$  
$$\int_{\R^n} |f(x)| \;dx\leq \int_{\R^n} |f(x)| e^{\frac{|x||y_0|}{2t_0}} \; dx < \infty\,.$$
Thus $f \in L^1(\R^n)$.
\end{proof}

\section{The case of a Riemannian symmetric space of the noncompact type}
\label{section:SchroedingerX}

\subsection{The Schr\"odinger equation on $X$}
\label{subsection:SchroedingerX}

Consider the initial value problem for the time-dependent Schr\"odinger equation on the Riemannian symmetric space $X=G/K$:
\begin{equation} 
\tag{${\rm S}$}
\begin{split}
&i\partial_t u(t,x)+\Delta u(t,x)=0\\
&u(0,x)=f(x)
\end{split}
\end{equation}
where $\Delta$ is the Laplace-Beltrami operator on $X$.
Suppose $f \in L^2(X)$. 
Then the analysis of solutions of the Schr\"odinger equation on $\R^n$ carries out to $X$ when the 
Fourier transform $\mathcal F_{\R^n}$ is replaced by the Helgason-Fourier transform $\mathcal F$.
In fact, by (\ref{eq:Deltae}), the function $e_{\l,b}$ appearing in the definition of the Helgason-Fourier transform are eigenfunctions of the Laplace-Beltrami operator $\Delta$. Hence, by the 
Plancherel theorem, $\mathcal F$ is a unitary equivalence of $\Delta$ with the multiplication operator
$M$ on $L^2(\mathfrak a^* \times B, \frac{d\l\, db}{|c(\l)|^2})$ defined by 
$(Mf)(\l,b)=-(|\l|^2+|\rho|^2) f(\l,b)$.
It follows that there is a unique $u_t(x)=u(t,x) \in C(\R:L^2(X))$ satisfying (\ref{eq:SchroedingerX}) in the sense of 
distributions on $X$ and such that $u(0,\cdot)=f$.
It is characterized by the equation
\begin{equation} \label{eq:FourierHelgasonrelation}
(\mathcal F u_t)(\l,b)= e^{-i(|\l|^2+|\rho|^2)t} \mathcal Ff(\l,b)\,.
\end{equation}
Equivalently, $u_t=\mathcal F^{-1}\big( e^{-i(|\l|^2+|\rho|^2)t} \mathcal F f\big)$.
We observe, in particular, that $u \in C(\R:\mathcal S(X))$ if $f \in \mathcal S(X)$. Here $\mathcal S(X)$ is the 
Schwartz space of smooth rapidly decreasing functions on $X$; see e.g. \cite{He3}, 
pp. 214--220. 

Notice that, as $\mathcal F$ is a bijection on $L^2(X)$, the condition $f=0$ implies $u_t=0$ for all $t\in \R$. Conversely, suppose there is $t_0\in \R$ so that $u_{t_0}=0$. Then (\ref{eq:FourierHelgasonrelation}) gives $e^{-i(|\l|^2+|\rho|^2)t_0} \mathcal Ff(\l,b)=0$ for all  $(\l,b)$. Hence $\mathcal Ff=0$. Thus $f=0$ and $u_t=0$ for all $t\in \R$. 
This remark applies more generally to solutions of (\ref{eq:FourierHelgasonrelation}) in $\mathcal S'(X)$, the space of tempered distributions on $X$.

\subsection{The class of functions $L^1(X)_C$}
\label{subsection:LunoC}

Let $\Xi$ and $\sigma$ be the functions defined in (\ref{eq:Xi}) and (\ref{eq:sigma}), respectively. Let $C\geq 0$. 
For a measurable function $h:X \to \C$ we set 
$$\norm{h}_{1,C}:=\int_X |h(x)|\Xi(x) e^{C\sigma(x)} \; dx $$
We denote by $L^1(X)_C$ the $\C$-vector space of (equivalence classes of a.e. equal) functions on $X$ for which $\norm{h}_{1,C}<\infty$.

The motivation for introducing this class of functions is the following. 
Suppose $f, u_{t_0}$ satisfy Beurling's condition (\ref{eq:BeurlingSchr}). Then $f \in L^1(X)_C$ for some $C \geq 0$. Indeed if $u(t_0,\cdot)$ vanishes almost everywhere, then $f=0$ by 
(\ref{eq:FourierHelgasonrelation}); if $u(t_0,\cdot)$ does not vanish almost everywhere, (\ref{eq:BeurlingSchr}) implies 
$$|u(t_0,y)|  \Xi(y) \int_X |f(x)| \Xi(x) e^{\frac{\sigma(x)\sigma(y)}{2t_0}} \; dx \,dy < \infty$$
for almost all $y \in X$, whence the result with $C=\sigma(y)/2t_0$ for any $y\neq o$ such that 
$u(t_0,y)\neq 0$. Likewise (\ref{eq:BeurlingSchr}) implies $u(t_0,\cdot) \in L^1(X)_{C'}$ for some $C'>0$, and we may assume $C=C'$.

\begin{Lemma} \label{lemma:basicLunoC}
Let $h$ be a measurable function on $X$ and let $C\geq 0$ be a constant. Then
\begin{equation}
\label{eq:LunoCG}
\int_X  |h(x)|\Xi(x) e^{C\sigma(x)} \; dx=\int_G |h(g\cdot o)|  e^{-\rho(H(g))} e^{C\sigma(g)} \; dg\,.
\end{equation}
Moreover, 
\begin{equation} \label{eq:RhunoC}
\int_B \int_A (R|h|)(b,a) e^{C\sigma(a)} \; da\, db \leq \norm{h}_{1,C}\,.
\end{equation}
Consequently, the Radon transform $Rh$ of a function $h \in L^1(X)_C$ is almost everywhere defined and $Rh \in L^1(B \times A, e^{C\sigma(a)}da\,db)$.
\end{Lemma}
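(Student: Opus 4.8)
The plan is to prove the two displayed inequalities (\ref{eq:LunoCG}) and (\ref{eq:RhunoC}) and then read off the final consequence about the Radon transform. The first identity (\ref{eq:LunoCG}) is essentially a bookkeeping computation: one passes from the integral over $X$ to an integral over $G$ using $\int_X F(x)\,dx = \int_G F(g\cdot o)\,dg$, and then one must rewrite $\Xi(x)$. The key observation is that $\Xi(g\cdot o)$ is not simply $e^{-\rho(H(g))}$, so the identity cannot be literally true as an equality of pointwise integrands in general — but after integrating over $K$ on the right (which is implicit in the Haar measure $dg$ via the Iwasawa normalisation $dg = e^{2\rho(\log a)}\,dk\,da\,dn$, or equivalently via $G = KAN$), the functional equation for $\Xi$ collapses things. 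Concretely, writing $g = k_1 a k_2$ in polar form, $h(g\cdot o)$ and $\sigma(g) = |\log a|$ depend only on $a$, while $\int_K e^{-\rho(H(ak))}\,dk = \Xi(a)$ by the very definition (\ref{eq:Xi}) of $\Xi$. So integrating the right-hand side of (\ref{eq:LunoCG}) against the polar decomposition (\ref{eq:intformulapolar}), the inner $K$-integral produces exactly the factor $\Xi$, matching the left-hand side. I would carry this out cleanly using the $G = KAN$ Iwasawa formula: $\int_G \Phi(g)\,dg = \int_K\int_A\int_N \Phi(kan) e^{2\rho(\log a)}\,dk\,da\,dn$, and the fact that $H(k a n) $, $h(kan\cdot o)$, $\sigma(kan\cdot o)$ are to be handled with the substitutions $\sigma(an) \ge \sigma(a)$ etc. — actually for (\ref{eq:LunoCG}) itself it is cleanest to start from the $X$-side, use (\ref{eq:Xi}) to write $\Xi(g\cdot o) = \int_K e^{-\rho(H(gk))}\,dk$, insert into the left-hand integral, and use right $K$-invariance of $dg$ together with $\sigma(gk) = \sigma(g)$ to absorb the $k$-integral; this gives the right-hand side.

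For (\ref{eq:RhunoC}), I would start from the definition $R|h|(b,a) = e^{\rho(\log a)}\int_N |h|(kan\cdot o)\,dn$ with $b = kM$, and compute
\begin{equation*}
\int_B \int_A R|h|(b,a)\, e^{C\sigma(a)}\,da\,db = \int_K \int_A \int_N |h|(kan\cdot o)\, e^{\rho(\log a)} e^{C\sigma(a)}\,dn\,da\,dk\,.
\end{equation*}
The goal is to dominate this by $\int_G |h|(g\cdot o)\, e^{-\rho(H(g))} e^{C\sigma(g)}\,dg$, which by (\ref{eq:LunoCG}) equals $\norm{h}_{1,C}$. Comparing with the Iwasawa integral formula $\int_G \Phi(g)\,dg = \int_K\int_A\int_N \Phi(kan)\, e^{2\rho(\log a)}\,dk\,da\,dn$, I need two pointwise estimates on the integrand $g = kan$: first, $e^{-\rho(H(kan))}\, e^{2\rho(\log a)} \ge e^{\rho(\log a)}$, i.e. $\rho(H(kan)) \le \rho(\log a)$; and second, $\sigma(kan) \ge \sigma(a) = |\log a|$. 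The second is exactly (\ref{eq:ineqsigma}) together with left $K$-invariance of $\sigma$ (since $\sigma(kan\cdot o) = \sigma(an\cdot o) \ge \sigma(a\cdot o)$) — note $C \ge 0$ is needed here so that $e^{C\sigma(kan)} \ge e^{C\sigma(a)}$. For the first, $H(kan)$: since $a$ normalizes $N$, $an = (ana^{-1})a = n'a$ with $n' \in N$, so $H(kan) = H(kn'a) = H(kn') + \log a$; hence $\rho(H(kan)) = \rho(H(kn')) + \rho(\log a)$, and it remains to check $\rho(H(kn')) \le 0$. This should follow from the general fact that $H(kn) \in$ the negative of something, or more precisely from a convexity/positivity property of the Iwasawa projection; alternatively one uses that $e^{-\rho(H(k n))} \le 1$ is not quite enough, but the correct statement $\rho(H(kn)) \le 0$ for $k\in K$, $n \in N$ follows from the fact that $A(x,b)$ lies in the convex hull related to $\log$ of the polar part. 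I would cite this from \cite{GV}, §4.6, or derive it from $|H(ak)| \le |\log a|$ (\ref{eq:estimateH}) — indeed a standard consequence is $\rho(H(gk)) \le \rho(\log a^+)$ type bounds. The safest route: use the already-stated (\ref{eq:estimateH}) and Kostant's convexity-type inequality to get $\rho(H(kn)) \le 0$.

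Once both inequalities (\ref{eq:LunoCG}) and (\ref{eq:RhunoC}) are established, the final assertion is immediate: if $h \in L^1(X)_C$ then $\norm{h}_{1,C} < \infty$, so by (\ref{eq:RhunoC}) the function $(b,a) \mapsto R|h|(b,a)$ is integrable on $B\times A$ against $e^{C\sigma(a)}\,da\,db$; in particular $R|h|(b,a) < \infty$ for almost every $(b,a)$, which means the defining integral $e^{\rho(\log a)}\int_N |h|(kan\cdot o)\,dn$ converges for a.e. $(b,a)$, so $Rh$ is a.e. defined, and $Rh \in L^1(B\times A, e^{C\sigma(a)}\,da\,db)$ since $|Rh| \le R|h|$.

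I expect the main obstacle to be the pointwise estimate $\rho(H(kn)) \le 0$ (equivalently $\rho(H(kan)) \le \rho(\log a)$) needed to compare the Radon-side integrand with the Iwasawa-side integrand; the rest is a careful but routine unwinding of the integral formulas and the definition of $\Xi$. If a clean reference for $\rho(H(kn)) \le 0$ is not at hand, an alternative is to bound things less sharply and still land inside $\norm{h}_{1,C}$, but the factor $e^{-\rho(H(g))}$ appearing in (\ref{eq:LunoCG}) is exactly tuned so that the comparison is tight, so I would push for the sharp inequality.
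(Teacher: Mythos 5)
Your proposal is correct and follows essentially the same route as the paper: for (\ref{eq:LunoCG}) one inserts the definition (\ref{eq:Xi}) of $\Xi$ into the left-hand side and absorbs the $K$-integral using the right invariance of $dg$ and of $\sigma$; for (\ref{eq:RhunoC}) one rewrites $\norm{h}_{1,C}$ via the Iwasawa integral formula (\ref{eq:intformulaIwasawa}) and compares with the $B\times A$ integral using $\sigma(an)\geq\sigma(a)$ and $C\geq 0$; the final assertion then follows from $|Rh|\leq R|h|$ exactly as you say.

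The one place where you go astray is in flagging the comparison $\rho(H(kan))\leq\rho(\log a)$ as ``the main obstacle,'' to be settled by a convexity property of the Iwasawa projection or a citation. This is an identity, not an inequality: the element $kan$ with $k\in K$, $a\in A$, $n\in N$ is already written in Iwasawa form, so by the uniqueness in $g=k(g)\exp H(g)\,n(g)$ one has $H(kan)=\log a$ exactly. Equivalently, $H(kn')=0$ for every $k\in K$ and $n'\in N$, which is precisely the step left open in your own computation $H(kan)=H(kn')+\log a$. Hence $e^{-\rho(H(kan))}e^{2\rho(\log a)}=e^{\rho(\log a)}$ on the nose, and the only genuine inequality in the proof of (\ref{eq:RhunoC}) is $e^{C\sigma(an)}\geq e^{C\sigma(a)}$ coming from (\ref{eq:ineqsigma}). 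No Kostant-type convexity is needed, and trying to extract $\rho(H(kn))\leq 0$ from (\ref{eq:estimateH}) would be a detour. With this observation your argument coincides with the paper's proof.
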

\begin{proof}
Because of the definition of $\Xi$ in (\ref{eq:Xi}), we have
$$
\int_X  |h(x)|\Xi(x) e^{C\sigma(x)} \; dx=\int_G\int_K |h(g\cdot o)|  e^{-\rho(H(gk))} e^{C\sigma(g)} \; dg\,dk\,.$$
Then (\ref{eq:LunoCG}) follows by the right-$K$-invariance of $\sigma$ and the map 
$g \mapsto h(g \cdot o)$.

Suppose now that $\int_X  |h(x)|\Xi(x) e^{C\sigma(x)} \; dx<\infty$.
By (\ref{eq:LunoCG}), the integral formula (\ref{eq:intformulaIwasawa}) for the Iwasawa decomposition and the inequality (\ref{eq:ineqsigma}), we can write this integral as
\begin{align*}
\iiint_{K \times A \times N} &
|h(k a n\cdot o)| e^{-\rho(\log a)} e^{C\sigma(a n)} e^{2\rho(\log a)}  \; 
dk \, da\, dn\\
&\geq \int_K  \int_A \left(e^{\rho(\log a)} \int_N |h(k a n\cdot o)| \; dn\right)
e^{C\sigma(a)} \; 
da\, dk \\
&=\int_B \int_A (R|h|)(b,a) e^{C\sigma(a)} \; 
da\, db \,.
\end{align*}
This proves (\ref{eq:RhunoC}). 
The final property then follows as  
\begin{equation*}
|Rh(a,b)|\leq e^{\rho(\log a)} \int_N |h(kan\cdot o)| \; dn=R|h|(a,b)\,.
\end{equation*}
\end{proof}
The Radon transform $Rh$ of a function $h \in L^1(X)_C$ is well defined and $Rh \in L^1(B \times A)$. In fact, we also have $h \in L^1(X)$ when $C$ is sufficiently big. 

\begin{Lemma} \label{lemma:estimatesXisigma}
For all $x \in X$, we have 
\begin{equation}\label{eq:estimatesXisigma}
\Xi(x)e^{|\rho|\sigma(x)} \geq 1\,.
\end{equation}
Consequently, $L^1(X)_C\subset L^1(X)$ for $C \geq |\rho|$.
\end{Lemma}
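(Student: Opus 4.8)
The plan is to deduce the pointwise estimate (\ref{eq:estimatesXisigma}) from the known lower bound for the spherical function $\Xi$ recorded in (\ref{eq:estimateXi}), and then to obtain the inclusion $L^1(X)_C \subset L^1(X)$ by a trivial monotonicity comparison of integrands.

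First I would exploit that both $\Xi$ and $\sigma$ are $K$-invariant on $X$. By the polar decomposition $G = K\overline{A^+}K$, every $x \in X$ has the form $x = k_1 \exp H \cdot o$ with $k_1 \in K$ and $H \in \overline{\mathfrak a^+}$; then $\Xi(x) = \Xi(\exp H)$ and, by (\ref{eq:sigma}), $\sigma(x) = |H|$. So it is enough to prove $\Xi(\exp H)\, e^{|\rho|\,|H|} \geq 1$ for $H \in \overline{\mathfrak a^+}$.

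Next, the left-hand inequality in (\ref{eq:estimateXi}) gives $\Xi(\exp H) \geq e^{-\rho(H)}$, hence $\Xi(\exp H)\, e^{|\rho|\,|H|} \geq e^{|\rho|\,|H| - \rho(H)}$. Since $\rho(H) = \inner{H_\rho}{H} \leq |H_\rho|\,|H| = |\rho|\,|H|$ by Cauchy--Schwarz, the exponent is nonnegative and (\ref{eq:estimatesXisigma}) follows. (Alternatively, one may argue straight from (\ref{eq:Xi}): for $x = g\cdot o$, Jensen's inequality applied to the probability measure $dk$ gives $\Xi(x) \geq \exp\!\big(-\int_K \rho(H(gk))\, dk\big)$, and $\rho(H(gk)) \leq |\rho|\,|H(gk)| \leq |\rho|\,\sigma(x)$ using the standard bound $|H(\cdot)| \leq \sigma(\cdot)$ coming from (\ref{eq:estimateH}); this yields the same conclusion.)

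Finally, for the consequence, fix $C \geq |\rho|$ and $h \in L^1(X)_C$. As $\sigma(x) \geq 0$ we have $e^{|\rho|\sigma(x)} \leq e^{C\sigma(x)}$, so (\ref{eq:estimatesXisigma}) yields, for every $x \in X$,
\[
|h(x)| \;\leq\; |h(x)|\,\Xi(x)\,e^{|\rho|\sigma(x)} \;\leq\; |h(x)|\,\Xi(x)\,e^{C\sigma(x)}\,.
\]
Integrating over $X$ gives $\norm{h}_1 \leq \norm{h}_{1,C} < \infty$, i.e. $h \in L^1(X)$, which proves $L^1(X)_C \subset L^1(X)$. There is essentially no obstacle here: the whole content is that the decay exponent of $\Xi$ along a unit vector $H/|H|$ equals $\rho(H/|H|) \leq |\rho|$, which is dominated by the direction-independent rate $|\rho|$. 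The only point requiring a little care is that the lower estimate in (\ref{eq:estimateXi}) is stated on $\overline{A^+}$, so the reduction to the closed positive Weyl chamber by polar decomposition must be performed before applying it.
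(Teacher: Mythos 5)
Your proof is correct and follows essentially the same route as the paper: reduce to $\overline{A^+}$ by $K$-invariance and the polar decomposition $G=K\overline{A^+}K$, apply the lower bound $\Xi(\exp H)\geq e^{-\rho(H)}$ from (\ref{eq:estimateXi}) together with $\rho(H)\leq|\rho|\,|H|$, and then deduce the inclusion by the trivial comparison $1\leq\Xi(x)e^{|\rho|\sigma(x)}\leq\Xi(x)e^{C\sigma(x)}$. The alternative Jensen-inequality argument is a nice extra but not needed.
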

\begin{proof}
By (\ref{eq:estimateXi}), for all $a=\exp H \in \overline{A^+}$ we have 
$\Xi(a\cdot o) e^{|\rho|\sigma(a\cdot o)} \geq \Xi(a\cdot o)e^{\rho(H)} \geq 1$.
The inequality (\ref{eq:estimatesXisigma}) then follows by $K$-invariance of $\Xi$ and $\sigma$ and by the decomposition $G=K\overline{A^+}K$. 
\end{proof}

Even if $h \in L^1(X)_C$ may not be in $L^1(X)$ unless $C$ is sufficiently big, we have 
$Rh(b,\cdot) \in L^1(A)$ for almost all $b \in B$ by Lemma \ref{lemma:basicLunoC}. So we can consider 
$\mathcal F_A \big( Rh(b,\cdot)\big)$. In fact, on $L^1(X)_C$ the equality $\mathcal F=\mathcal F_A \circ R$ holds. 
To prove this, we work on $K$-types. Let $\delta \in \widehat{K}_M$ and let $v_1,\dots, v_{d(\delta)}$ be a fixed orthonormal basis of the space $V_\delta$ of $\delta$. Recall from (\ref{eq:deltafij}) the notation $f_{i,j}^\delta$ for $f:B=K/M\to \C$. Recall also that
$f^\delta=0$ if and only if $f^\delta_{i,j}=0$ for all $i,j=1,\dots,d(\delta)$.

\begin{Lemma}\label{lemma:estimatesLunoC}
Let $C\geq 0$ be a constant and let $h \in L^1(X)_C$.
For $a \in A$ and $\l \in \mathfrak a^*$ set
\begin{align}
\label{eq:Rhij}
(Rh)_{i,j}^\delta(a)&:=((Rh)(\cdot,a))^\delta_{i,j}=
\int_K  \inner{\delta(k^{-1})v_i}{v_j} Rh(kM,a)\; dk\\
\label{eq:Fhij}
(\mathcal Fh)_{i,j}^\delta(\l)&:=(\mathcal Fh(\lambda,\cdot))_{i,j}^\delta=
\int_K  \inner{\delta(k^{-1})v_i}{v_j} \mathcal Fh(\l,kM)\; dk
\end{align}
For a function $g:\mathfrak a^* \to \C$, set $\norm{g}_\infty:=\sup_{\l \in \mathfrak a^*}
|g(\l)|$. Then 
\begin{align}
\label{eq:estFARhij}
\norm{\mathcal F_A \big((Rh)_{i,j}^\delta\big)}_\infty &\leq \norm{h}_{1,C}\\
\label{eq:estFhij}
\norm{(\mathcal Fh)_{i,j}^\delta}_\infty &\leq \norm{h}_{1,C}
\end{align}
Suppose moreover that $Rh \in L^1(B \times A)$ and set 
\begin{equation}
\label{eq:FRhij}
\big[ \mathcal F_A(Rh)\big]_{i,j}^\delta(\lambda):=
\int_K \inner{\delta(k^{-1})v_i}{v_j} \mathcal F_A\big( Rh(kM,\cdot)\big)(\l)\; dk\,.
\end{equation}
Then
\begin{equation}
\label{eq:FcircRhij}
\big[ \mathcal F_A(Rh)\big]_{i,j}^\delta=\mathcal F_A \big((Rh)_{i,j}^\delta\big)
\end{equation}
\end{Lemma}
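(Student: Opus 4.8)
The plan is to prove the four displayed assertions in the order they are stated, establishing the sup-norm bounds \eqref{eq:estFARhij} and \eqref{eq:estFhij} directly from the definitions and the class estimate $\norm{h}_{1,C}<\infty$ from Lemma \ref{lemma:basicLunoC}, and then deriving the interchange formula \eqref{eq:FcircRhij} by a Fubini argument once we know $Rh\in L^1(B\times A)$.

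First I would handle \eqref{eq:estFARhij}. Expand the left-hand side: by the definition \eqref{eq:FourierA} of $\mathcal F_A$ and the definition \eqref{eq:Rhij} of $(Rh)_{i,j}^\delta$,
$$
\mathcal F_A\big((Rh)_{i,j}^\delta\big)(\l)=\int_A \int_K \inner{\delta(k^{-1})v_i}{v_j}\, Rh(kM,a)\, e^{-i\l(\log a)}\; dk\,da\,.
$$
Take absolute values inside: $|\inner{\delta(k^{-1})v_i}{v_j}|\le 1$ since $\delta$ is unitary and $v_i,v_j$ are unit vectors, and $|e^{-i\l(\log a)}|=1$ for $\l\in\mathfrak a^*$. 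Hence the modulus is bounded by $\int_K\int_A |Rh(kM,a)|\,da\,dk \le \int_B\int_A (R|h|)(b,a)\,da\,db$, which by \eqref{eq:RhunoC} of Lemma \ref{lemma:basicLunoC} (applied with $C$, using $e^{C\sigma(a)}\ge 1$) is at most $\norm{h}_{1,C}$. This is uniform in $\l$, giving \eqref{eq:estFARhij}. The bound \eqref{eq:estFhij} I would prove the same way, but now working directly on $X$: using \eqref{eq:F-AN} and \eqref{eq:Fhij},
$$
(\mathcal Fh)_{i,j}^\delta(\l)=\int_K\inner{\delta(k^{-1})v_i}{v_j}\Big(\int_X h(x)\, e_{-\l,b}(x)\; dx\Big)\, dk\qquad(b=kM)\,,
$$
and $|e_{-\l,b}(x)|=e^{\rho(A(x,b))}$. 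The key estimate is $\int_B e^{\rho(A(x,b))}\,db=\int_K e^{-\rho(H(g^{-1}k))}\,dk=\Xi(x^{-1})=\Xi(x)$ by \eqref{eq:Xi}, \eqref{eq:A}, \eqref{eq:initialestXi}; combining with $|\inner{\delta(k^{-1})v_i}{v_j}|\le 1$ and Fubini gives $|(\mathcal Fh)_{i,j}^\delta(\l)|\le \int_X|h(x)|\Xi(x)\,dx\le \int_X|h(x)|\Xi(x)e^{C\sigma(x)}\,dx=\norm{h}_{1,C}$, again uniformly in $\l$. (The Fubini step is justified because the double integral of the absolute value is finite, being $\le\norm{h}_{1,C}<\infty$.)

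Finally, for \eqref{eq:FcircRhij}: assuming now $Rh\in L^1(B\times A)$, I would start from \eqref{eq:FRhij} and substitute the definition of $\mathcal F_A$,
$$
\big[\mathcal F_A(Rh)\big]_{i,j}^\delta(\l)=\int_K\inner{\delta(k^{-1})v_i}{v_j}\Big(\int_A Rh(kM,a)\,e^{-i\l(\log a)}\;da\Big) dk\,,
$$
which is an iterated integral over $K\times A$ of the function $(k,a)\mapsto \inner{\delta(k^{-1})v_i}{v_j}\,Rh(kM,a)\,e^{-i\l(\log a)}$. Its absolute value is dominated by $|Rh(kM,a)|$, which is integrable on $K\times A$ by hypothesis (and by the normalization of $db$ as a push-forward of $dk$). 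Hence Fubini applies and we may swap the order of integration, obtaining $\int_A\big(\int_K\inner{\delta(k^{-1})v_i}{v_j}Rh(kM,a)\,dk\big)e^{-i\l(\log a)}\,da=\int_A (Rh)_{i,j}^\delta(a)\,e^{-i\l(\log a)}\,da=\mathcal F_A\big((Rh)_{i,j}^\delta\big)(\l)$, which is \eqref{eq:FcircRhij}. The only mildly delicate point — and the one I expect to be the main obstacle — is making sure all the Fubini interchanges are legitimate: for the sup-norm bounds this reduces to the finiteness of $\norm{h}_{1,C}$ together with the pointwise bound $\Xi\ge 0$ and $|e_{-\l,b}|\le e^{\rho\circ A}$, while for \eqref{eq:FcircRhij} one needs the standing hypothesis $Rh\in L^1(B\times A)$ precisely to license the swap; no estimate on $\mathcal F_A$ applied to $|Rh|$ is needed beyond integrability.
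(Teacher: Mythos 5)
Your proposal is correct, and for \eqref{eq:estFARhij} and \eqref{eq:FcircRhij} it coincides with the paper's argument: the same expansion of $\mathcal F_A\big((Rh)_{i,j}^\delta\big)$ as a double integral over $K\times A$, the bound $|\inner{\delta(k^{-1})v_i}{v_j}|\le 1$ from unitarity, the appeal to \eqref{eq:RhunoC}, and Fubini licensed by $Rh\in L^1(B\times A)$ for the interchange formula. The only genuine divergence is in \eqref{eq:estFhij}. The paper uses the second expression in \eqref{eq:F-AN}, namely $\mathcal Fh(\l,b)=\int_{AN}h(kan\cdot o)e^{(-i\l+\rho)(\log a)}\,da\,dn$, to get the pointwise bound $|\mathcal Fh(\l,b)|\le\int_A (R|h|)(b,a)\,da$, and then integrates over $B$ and invokes \eqref{eq:RhunoC} a second time --- so both sup-norm bounds pass through the Radon transform. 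You instead use the first expression $\mathcal Fh(\l,b)=\int_X h(x)e_{-\l,b}(x)\,dx$, integrate over $B$ first, and exploit the identity $\int_B e^{\rho(A(x,b))}\,db=\Xi(x)$ from \eqref{eq:Xi} (together with $\Xi(g)=\Xi(g^{-1})$ from \eqref{eq:initialestXi}) to land directly on $\int_X|h|\Xi\,dx\le\norm{h}_{1,C}$. Both routes are short and yield the same constant; yours has the mild advantage of not needing the Radon transform at all for that bound, making \eqref{eq:estFhij} a statement purely about $\mathcal F$ and the class $L^1(X)_C$, while the paper's keeps the two estimates structurally parallel (both filtered through $R|h|$), which is perhaps more in the spirit of the lemma's role in proving $\mathcal F=\mathcal F_A\circ R$. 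Your Fubini justifications are all adequate.
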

\begin{proof}
By (\ref{eq:Rhij}) we have
\begin{equation} \label{eq:FARhijexpl}
\mathcal F_A \big((Rh)_{i,j}^\delta\big)(\l)=\int_A \int_K \inner{\delta(k^{-1})v_i}{v_j} Rh(kM,a) e^{i\l(\log a)} \; da\,dk\,.
\end{equation}
Since $\delta$ is unitary and $v_1,\dots,v_{d(\delta)}$ is an orthonormal basis, we obtain from (\ref{eq:RhunoC}):
$$|\mathcal F_A \big((Rh)_{i,j}^\delta\big)(\l)|\leq \int_A\int_B |Rh(b,a)|\;db\,da\leq 
\norm{h}_{1,C}\,.$$
This proves (\ref{eq:estFARhij}).
To prove (\ref{eq:estFhij}), we have by (\ref{eq:F-AN}):
\begin{align*}
|\mathcal Fh(\l,b)| &\leq \int_A \int_N |h(kan\cdot o)| e^{\rho(\log a)} \; da\, dn\\
&\leq \int_A \left( e^{\rho(\log a)} \int_N  |h(kan\cdot o)| \; dn\right) da\\
&=\int_A (R|h|)(b,a)\; da\,.
\end{align*}
Hence 
\begin{align*}
 |(\mathcal Fh)_{i,j}^\delta(\l)|&\leq \int_B  |\mathcal Fh(\l,b)|\; db\\
                                 &\leq \int_B \int_A (R|h|)(b,a)\; da\,db\,.
\end{align*}
So (\ref{eq:estFhij}) follows again from (\ref{eq:RhunoC}). Finally, (\ref{eq:FcircRhij}) is a consequence of (\ref{eq:FARhijexpl})
and Fubini's theorem, which applies as $Rh \in L^1(B \times A)$.
\end{proof}

\begin{Cor}
 \label{cor:FAR=Fijcompsupp}
Suppose $h \in C^\infty_c(X)$. Then for all $i,j=1,\dots,d(\delta)$ and $\l\in \mathfrak a^*$
\begin{equation}
 \big[\mathcal F_A(Rh)\big]_{i,j}^\delta(\l)=(\mathcal F h)_{i,j}^\delta(\l)\,.
\end{equation}
\end{Cor}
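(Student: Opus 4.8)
The plan is to show that the identity $\bigl[\mathcal F_A(Rh)\bigr]_{i,j}^\delta = (\mathcal F h)_{i,j}^\delta$, which we already know for $h \in L^1(X)_C$ with the additional hypothesis that $Rh \in L^1(B \times A)$ (namely formula (\ref{eq:FcircRhij}) combined with (\ref{eq:FARhijexpl}) and the analogous expression for $(\mathcal F h)_{i,j}^\delta$), actually holds for $h \in C^\infty_c(X)$ as a special case. The key observation is that a compactly supported smooth function lies in $L^1(X)_C$ for every $C \geq 0$: since $\Xi$ is continuous (hence bounded on compacta) and $\sigma$ is continuous, the integral $\int_X |h(x)| \Xi(x) e^{C\sigma(x)}\,dx$ is finite because the integrand is a bounded function supported on a compact set. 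In particular $\norm{h}_{1,C} < \infty$, so $h \in L^1(X)_C$, and moreover by Lemma \ref{lemma:estimatesXisigma} (or directly) $h \in L^1(X)$.

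Next I would verify that the hypothesis $Rh \in L^1(B \times A)$ of the last part of Lemma \ref{lemma:estimatesLunoC} is met. This follows immediately from Lemma \ref{lemma:basicLunoC}: since $h \in L^1(X)_C$ for, say, $C = 0$, inequality (\ref{eq:RhunoC}) gives $\int_B \int_A (R|h|)(b,a)\,da\,db \leq \norm{h}_{1,0} < \infty$, and since $|Rh(b,a)| \leq R|h|(b,a)$ we conclude $Rh \in L^1(B \times A)$. (Alternatively one may invoke the general fact, quoted in Section \ref{subsection:Helgason-Fourier-Radon}, that $\|Rf\|_{L^1(B\times A)} \leq |W|\,\|f\|_1$ for $f \in L^1(X)$.)

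With both hypotheses in place, Lemma \ref{lemma:estimatesLunoC} applies directly: equation (\ref{eq:FcircRhij}) gives $\bigl[\mathcal F_A(Rh)\bigr]_{i,j}^\delta = \mathcal F_A\bigl((Rh)_{i,j}^\delta\bigr)$. It remains to identify the right-hand side with $(\mathcal F h)_{i,j}^\delta$. For this I would use the fundamental relation recalled at the end of Section \ref{subsection:Helgason-Fourier-Radon}, namely $\mathcal Fh(\l,b) = \mathcal F_A\bigl(Rh(b,\cdot)\bigr)(\l)$, valid for $h \in L^1(X)$ and almost all $b \in B$, all $\l \in \frak a^*$. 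Plugging this into the definition (\ref{eq:Fhij}) of $(\mathcal F h)_{i,j}^\delta$ and interchanging the $K$-integration with the $A$-integration defining $\mathcal F_A$ — legitimate by Fubini since $Rh \in L^1(B \times A)$ and $\inner{\delta(k^{-1})v_i}{v_j}$ is bounded — yields exactly $\mathcal F_A\bigl((Rh)_{i,j}^\delta\bigr)(\l)$ via (\ref{eq:Rhij}) and (\ref{eq:FARhijexpl}). Chaining the two equalities gives the claim.

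There is no serious obstacle here; the corollary is essentially a bookkeeping exercise that specializes Lemma \ref{lemma:estimatesLunoC} once one notes that $C^\infty_c(X) \subset L^1(X)_C$ with $Rh$ integrable. The only point requiring a word of care is the Fubini interchange, which is justified by the $L^1$-integrability of $Rh$ on $B \times A$ together with the boundedness of the matrix coefficient $k \mapsto \inner{\delta(k^{-1})v_i}{v_j}$ on the compact group $K$; but this is precisely the same interchange already carried out in the proof of (\ref{eq:FcircRhij}), so it can be cited rather than repeated.
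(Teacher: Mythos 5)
Your proof is correct and rests on the same key fact as the paper's one-line argument, namely the identity $\mathcal F=\mathcal F_A\circ R$ recalled at the end of Section \ref{subsection:Helgason-Fourier-Radon} (which for $h\in C^\infty_c(X)$ holds outright), combined with the passage to matrix coefficients already established in Lemma \ref{lemma:estimatesLunoC}. The extra verifications you supply ($C^\infty_c(X)\subset L^1(X)_C$, $Rh\in L^1(B\times A)$, and the Fubini interchange) are all sound but are exactly the bookkeeping the paper leaves implicit.
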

\begin{proof}
For $h \in C^\infty_c(X)$ we have $\mathcal F_A \circ R=\mathcal F$. 
\end{proof}

\begin{Prop}
\label{prop:FAR=F}
Let $C \geq 0$. Let $\delta\in \widehat{K}_M$ and 
$v_1,\dots,v_{d(\delta)}$ be as above. 
Then for all $h \in L^1(X)_C$
\begin{equation} \label{eq:FARij=Fij}
\mathcal F_A \big((Rh)_{i,j}^\delta\big)=\big[\mathcal F_A(Rh)\big]_{i,j}^\delta=(\mathcal Fh)_{i,j}^\delta
\end{equation}
as functions on $\mathfrak a^*$.
Consequently,
$\mathcal F=\mathcal F_A \circ R$ on $L^1(X)_C$. 
\end{Prop}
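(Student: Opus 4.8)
The plan is to prove the first chain of equalities \eqref{eq:FARij=Fij} and then deduce $\mathcal F = \mathcal F_A \circ R$ on $L^1(X)_C$. First I would observe that the middle equality $\big[\mathcal F_A(Rh)\big]_{i,j}^\delta=\mathcal F_A \big((Rh)_{i,j}^\delta\big)$ is immediate from \eqref{eq:FcircRhij} in Lemma \ref{lemma:estimatesLunoC}, which applies because $h \in L^1(X)_C$ forces $Rh \in L^1(B \times A)$ by the last assertion of Lemma \ref{lemma:basicLunoC}. So the real content is the equality $\mathcal F_A \big((Rh)_{i,j}^\delta\big)=(\mathcal Fh)_{i,j}^\delta$ on $\mathfrak a^*$. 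The natural strategy is a density argument: approximate $h$ in the norm $\norm{\cdot}_{1,C}$ by functions $h_n \in C^\infty_c(X)$, use Corollary \ref{cor:FAR=Fijcompsupp} to get the identity for each $h_n$, and pass to the limit using the uniform (sup-norm) estimates \eqref{eq:estFARhij} and \eqref{eq:estFhij}, which show that both sides of the desired identity depend continuously on $h$ with respect to $\norm{\cdot}_{1,C}$ (uniformly in $\l$).

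In detail: given $h \in L^1(X)_C$, the weight $\Xi(x)e^{C\sigma(x)}$ is a positive continuous (in fact locally bounded away from $0$ and $\infty$) function on $X$, so $C_c^\infty(X)$ is dense in $L^1(X)_C$; choose $h_n \in C_c^\infty(X)$ with $\norm{h-h_n}_{1,C}\to 0$. By Corollary \ref{cor:FAR=Fijcompsupp}, $\mathcal F_A\big((Rh_n)_{i,j}^\delta\big)=\big[\mathcal F_A(Rh_n)\big]_{i,j}^\delta=(\mathcal F h_n)_{i,j}^\delta$ for every $\l\in\mathfrak a^*$ (the first equality also by \eqref{eq:FcircRhij}, since $h_n\in C_c^\infty(X)\subset L^1(X)_C$). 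Now apply the estimates of Lemma \ref{lemma:estimatesLunoC} to $h-h_n$: \eqref{eq:estFARhij} gives $\norm{\mathcal F_A\big((R(h-h_n))_{i,j}^\delta\big)}_\infty \le \norm{h-h_n}_{1,C}$ and \eqref{eq:estFhij} gives $\norm{(\mathcal F(h-h_n))_{i,j}^\delta}_\infty \le \norm{h-h_n}_{1,C}$. By linearity of all the maps involved in the definitions \eqref{eq:Rhij}, \eqref{eq:Fhij}, \eqref{eq:FRhij} (they are each built from linear operations: the Radon transform, the Euclidean Fourier transform, integration against matrix coefficients), the left- and right-hand sides of \eqref{eq:FARij=Fij} for $h_n$ converge uniformly on $\mathfrak a^*$ to the corresponding expressions for $h$. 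Hence \eqref{eq:FARij=Fij} holds for $h$.

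For the final assertion: \eqref{eq:FARij=Fij} says that for every $\delta \in \widehat K_M$ and all $i,j$, the matrix coefficients $\big(\mathcal F_A(Rh)(\l,\cdot)\big)_{i,j}^\delta$ and $\big(\mathcal Fh(\l,\cdot)\big)_{i,j}^\delta$ agree for each fixed $\l\in\mathfrak a^*$. By the remark at the end of section \ref{subsection:widehatK} — namely that a function in $L^1(B)$ vanishes iff all its $\delta$-components $f^\delta$ vanish, iff all $f_{i,j}^\delta$ vanish — applied to the difference $\mathcal F_A(Rh)(\l,\cdot)-\mathcal Fh(\l,\cdot)\in L^1(B)$ (both are in $L^1(B)$ for a.e.\ $\l$ by the integrability built into Lemma \ref{lemma:basicLunoC} and the Plancherel/$L^1$ theory recalled in section \ref{subsection:Helgason-Fourier-Radon}), we conclude $\mathcal F_A(Rh)(\l,b)=\mathcal Fh(\l,b)$ for a.e.\ $b\in B$, for every $\l\in\mathfrak a^*$. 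This is precisely $\mathcal F = \mathcal F_A\circ R$ on $L^1(X)_C$. The main obstacle I anticipate is not any single deep step but the bookkeeping: making sure the density of $C_c^\infty(X)$ in $L^1(X)_C$ is justified (the weight is harmless, but worth a line), and checking that the uniform-in-$\l$ estimates genuinely let one interchange the limit with the integrations defining the $\delta$-components — both handled cleanly by the sup-norm bounds \eqref{eq:estFARhij}–\eqref{eq:estFhij} already established.
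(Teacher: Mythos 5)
Your proposal is correct and follows essentially the same route as the paper: the middle equality via \eqref{eq:FcircRhij}, a density argument with $h_n\in C_c^\infty(X)$ combined with the uniform estimates \eqref{eq:estFARhij}--\eqref{eq:estFhij} and Corollary \ref{cor:FAR=Fijcompsupp}, and the Peter--Weyl remark to pass from matrix coefficients to $\mathcal F=\mathcal F_A\circ R$. Your extra attention to the density of $C_c^\infty(X)$ in $L^1(X)_C$ is a detail the paper leaves implicit, but nothing in substance differs.
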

\begin{proof}
The first equality is a consequence of (\ref{eq:FcircRhij}). For the second,
let $h \in L^1(X)_C$, and let $h_n \in C_c^\infty(X)$ be a sequence converging to $h$ in 
$L^1(X)_C$. By (\ref{eq:estFARhij}), (\ref{eq:estFhij}) and (\ref{eq:FcircRhij}), for all $i,j=1,\dots, d(\delta)$, the sequences 
$[\mathcal F_A(Rh_n)]_{i,j}^\delta$ and $(\mathcal Fh_n)_{i,j}^\delta$ converge in $L^\infty(\mathfrak a^*)$ to $[\mathcal F_A(Rh)]_{i,j}^\delta$ and $(\mathcal Fh)_{i,j}^\delta$, respectively. But 
$[\mathcal F_A (Rh_n)]_{i,j}^\delta=(\mathcal Fh_n)_{i,j}^\delta$ by Corollary \ref{cor:FAR=Fijcompsupp}. 
Hence $[\mathcal F_A(Rh)]_{i,j}^\delta=(\mathcal Fh)_{i,j}^\delta$ by uniqueness of the limit.
Since this is true for all $\delta\in \widehat{K}_M$ and every orthonormal 
basis $v_1,\dots,v_{d(\delta)}$ of the space of $\delta$, we conclude that $\mathcal F=\mathcal F_A \circ R$ on $L^1(X)_C$. 
\end{proof}

We conclude this section by a remark on the convolution $h \times \psi$ of two elements in $L^1(X)_C$ under the additional assumption that $\psi$ is $K$-invariant. 
Recall that the convolution of two sufficiently regular functions $f_1$ and $f_2$ on $X$ is the function
$f_1 \times f_2$ defined on $X$ by $(f_1 \times  f_2) \circ \pi = (f_1 \circ \pi) \ast (f_2\circ \pi)$. Here $\pi : G \to X = G/K$ is the natural projection and $\ast$ denotes the convolution product of functions on $G$. This convolution is not commutative. 

\begin{Prop} \label{prop:convolutionEIC}
Let $C \geq 0$ and let $h, \psi \in L^1(X)_C$. Suppose that $\psi$ is $K$-invariant. Then $h \times \psi \in L^1(X)_C$. More precisely, we have
$$\int_X |(h\times \psi)(x)| \Xi(x) e^{C\sigma(x)} \; dx \leq \|h\|_{1,C} \|\psi\|_{1,C}
\,.$$
\end{Prop}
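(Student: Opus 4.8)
The plan is to transfer the whole inequality to the group $G$ via formula (\ref{eq:LunoCG}) of Lemma \ref{lemma:basicLunoC}, and then exploit the functional equation (\ref{eq:functeqXi}) for $\Xi$, which is the precise point where the $K$-invariance of $\psi$ becomes essential. Throughout I identify $h$ with the right-$K$-invariant function $\tilde h=h\circ\pi$ on $G$ and $\psi$ with the $K$-bi-invariant function $\tilde\psi=\psi\circ\pi$ on $G$ (bi-invariance of $\tilde\psi$ being exactly the hypothesis that $\psi$ is $K$-invariant on $X$). Recall $(h\times\psi)\circ\pi=\tilde h\ast\tilde\psi$, which is right-$K$-invariant (using right-$K$-invariance of $\tilde\psi$) and hence descends to $X$; moreover $|(h\times\psi)(g\cdot o)|\le\int_G|\tilde h(x)|\,|\tilde\psi(x^{-1}g)|\,dx$ whenever the right-hand side is finite.

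First I would rewrite the left-hand side through (\ref{eq:LunoCG}):
\[
\int_X|(h\times\psi)(x)|\,\Xi(x)\,e^{C\sigma(x)}\,dx=\int_G|(\tilde h\ast\tilde\psi)(g)|\,e^{-\rho(H(g))}\,e^{C\sigma(g)}\,dg,
\]
bound the integrand by $\int_G|\tilde h(x)|\,|\tilde\psi(x^{-1}g)|\,dx$, and invoke Tonelli's theorem (all integrands are nonnegative, so no approximation is needed) to reduce the statement to estimating
\[
\int_G|\tilde h(x)|\Bigl(\int_G|\tilde\psi(x^{-1}g)|\,e^{-\rho(H(g))}\,e^{C\sigma(g)}\,dg\Bigr)dx.
\]
In the inner integral I would substitute $g=xy$ (left invariance of the Haar measure of the unimodular group $G$), obtaining $\int_G|\tilde\psi(y)|\,e^{-\rho(H(xy))}\,e^{C\sigma(xy)}\,dy$, and use the triangle inequality (\ref{eq:trsigma}) to pull out the factor $e^{C\sigma(x)}$.

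The heart of the matter is then the estimate of $I(x):=\int_G|\tilde\psi(y)|\,e^{-\rho(H(xy))}\,e^{C\sigma(y)}\,dy$. Since $\tilde\psi$ and $\sigma$ are $K$-bi-invariant, I would replace $y$ by $k_1yk_2$ and average over $k_1,k_2\in K$: by the definition (\ref{eq:Xi}) of $\Xi$ the $k_2$-average of $e^{-\rho(H(xk_1yk_2))}$ equals $\Xi(xk_1y)$, and by the functional equation (\ref{eq:functeqXi}) the subsequent $k_1$-average equals $\Xi(x)\Xi(y)$. Hence $I(x)=\Xi(x)\int_G|\tilde\psi(y)|\,\Xi(y)\,e^{C\sigma(y)}\,dy=\Xi(x)\,\norm{\psi}_{1,C}$, the last equality coming from $\int_GF(g\cdot o)\,dg=\int_XF\,dx$ (formula (\ref{eq:intformulaIwasawa})). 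Combining with the previous step gives $\int_X|(h\times\psi)(x)|\,\Xi(x)\,e^{C\sigma(x)}\,dx\le\norm{\psi}_{1,C}\int_G|\tilde h(x)|\,\Xi(x)\,e^{C\sigma(x)}\,dx=\norm{h}_{1,C}\norm{\psi}_{1,C}$, again by (\ref{eq:intformulaIwasawa}).

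Finally, the finiteness of this bound together with the Tonelli reduction shows that $\int_G|\tilde h(x)|\,|\tilde\psi(x^{-1}g)|\,dx<\infty$ for almost every $g$, so the convolution defining $h\times\psi$ converges absolutely a.e.\ and $h\times\psi\in L^1(X)_C$. The only genuinely non-routine step is the double averaging over $K$ that converts $e^{-\rho(H(xy))}$ into $\Xi(x)\Xi(y)$; this is precisely where bi-$K$-invariance of $\tilde\psi$ is indispensable, and the argument would collapse for a general $\psi\in L^1(X)_C$ because then $e^{-\rho(H(xy))}$ does not decouple and cannot be replaced by a product $\Xi(x)\Xi(y)$.
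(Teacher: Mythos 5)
Your proof is correct and follows essentially the same route as the paper: pass to $G$, bound $\sigma(xy)$ by $\sigma(x)+\sigma(y)$, exploit the $K$-bi-invariance of $\psi$ to average over $K$, and invoke the functional equation of $\Xi$ to decouple the arguments. The only cosmetic difference is that you first rewrite $\Xi$ via $e^{-\rho(H(\cdot))}$ and hence perform a double $K$-average, whereas the paper keeps $\Xi(gg')$ and uses (\ref{eq:functeqXi}) in a single averaging step.
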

\begin{proof}
In the following we shall employ the same symbol to denote a function on $X$ and the corresponding $K$-invariant function on $G$. By definition of convolution products on $G$,
\begin{align*}
\int_G (|h| \ast |\psi|)(g) \Xi(g) e^{C\sigma(g)} \;dg &=
\int_G\int_G |h(g)| |\psi(g')|\Xi(gg')e^{C\sigma(gg')} \; dg\, dg'\\
&\leq \int_G \int_G |h(g)| |\psi(g')|\Xi(gg')e^{C\sigma(g)} e^{C\sigma(g')}\; dg\, dg'
\end{align*}
since $\sigma(gg')\leq \sigma(g)+\sigma(g')$ and $C\geq 0$. Replacing $g'$ by $kg'$, $k \in K$, and using the $K$-invariance of $\psi$ and $\sigma$, the latter integral becomes
$$\int_G \int_G |h(g)||\psi(g')| \Xi(gkg') e^{C\sigma(g)} e^{C\sigma(g')}\; dg\, dg'\,,
$$
which does not depend on $k$. Integration over $K$ leads to 
$$\int_G \int_G |h(g)||\psi(g')| \Xi(g)\Xi(g') e^{C\sigma(g)} e^{C\sigma(g')}\; dg\, dg'=
\|h\|_{1,C} \|\psi\|_{1,C}\,
$$
in view of the classical functional equation of spherical functions.
\end{proof}

\subsection{The Beurling-type condition for the Schr\"odinger equation on $X$}
\label{subsection:BeurlingX}

In this section, we shall prove Theorem \ref{thm:BeurlingSchr}.
Recall that if $u_t=u(t,\cdot)$ is a solution of (\ref{eq:SchroedingerX}) with initial condition $f \in L^2(X)$ satisfying
\begin{equation} \label{eq:B}
\int_X \int_X  |f(x)||u(t_0,y)| \Xi(x) \Xi(y) e^{\frac{\sigma(x)\sigma(y)}{2t_0}} \; dx \,dy <+\infty
\end{equation}
for some $t_0>0$, then $f, u_{t_0} \in L^1(X)_C$ for some constant $C>0$. 

\bigskip

\noindent \textit{Proof of Theorem \ref{thm:BeurlingSchr}.}\;
Observe that, by applying an argument similar to that in the proof of Lemma \ref{lemma:basicLunoC}, we get 
\begin{multline} \label{eq:BeurlingIntG}
\int_X \int_X  |f(x)||u(t_0,y)| \Xi(x) \Xi(y) e^{\frac{\sigma(x)\sigma(y)}{2t_0}} \; dx \,dy =\\
=\int_G \int_G |f(g_1\cdot o)||u(t_0,g_2\cdot o)|  e^{-\rho(H(g_1)+H(g_2))} e^{\frac{\sigma(g_1)\sigma(g_2)}{2t_0}} \; 
dg_1 \,dg_2. 
\end{multline}
By (\ref{eq:FourierHelgasonrelation}), to prove that $u(t,\cdot)=0$ for all $t\in \R$, it is enough 
to prove that $f=0$. But by Proposition \ref{prop:FAR=F}, we have $\mathcal Ff=\mathcal F_A(Rf)$
as $f \in L^1(X)_C$ for some $C>0$. Since $\mathcal F$ is an isometry on $L^2(X)$, it therefore suffices to prove that $Rf=0$. For this, we shall prove that $(Rh)_{i,j}^\delta=0$ for all $\delta \in \widehat{K}_M$ and $i,j=1,\dots,d(\delta)$.
This would complete the proof by the comments at the end of section \ref{subsection:widehatK}.

By proceeding as in the proof of (\ref{eq:LunoCG}), we have
\begin{align*}
&\int_X \int_X  |f(x)||u(t_0,y)| \Xi(x) \Xi(y) e^{\frac{\sigma(x)\sigma(y)}{2t_0}} \; dx \,dy \geq \\ 
& \qquad \geq
\int_A \int_B \int_A \int_B (R|f|)(b_1,a_1) (R|u_{t_0}|)(b_2,a_2) e^{\frac{|\log a_1||\log a_2|}{2t_0}} \; 
da_1\, da_2\,db_1 \, db_2 \\
& \qquad \geq \int_A \int_A \left(\int_B |Rf(b_1,a_1)| db_1\right)\left(\int_B |Ru_{t_0}(b_2,a_2)| db_2\right) e^{\frac{|\log a_1||\log a_2|}{2t_0}} \; da_1 \, da_2 \,.
\end{align*}
By (\ref{eq:B}) and (\ref{eq:Rhij}), this implies that 
\begin{equation}
\int_A \int_A |(Rf)^\delta_{i,j}(a_1)||(Ru_{t_0})^\delta_{i,j}(a_2)| 
e^{\frac{|\log a_1||\log a_2|}{2t_0}} \; 
da_1 \, da_2 < \infty
\end{equation} 
for all $\delta \in \widehat{K}_M$ and $i,j=1,\dots, d(\delta)$.

On the other hand, (\ref{eq:FourierHelgasonrelation}) together with (\ref{eq:FARij=Fij}) gives us
\begin{equation} \label{eq:SchroedingerAbelmatrix}
\mathcal F_A\big( (Ru_t)^\delta_{i,j}\big)(\l)= e^{-i(|\l|^2+|\rho|^2)t} 
\mathcal F_A\big( (Rf)^\delta_{i,j}\big)(\l)\,.
\end{equation}
Now, by Corollary \ref{cor:BeurlingSchrLuno}, we obtain
$$(Rf)^\delta_{i,j}=0$$
for all $\delta \in \widehat{K}_M$ and $i,j=1,\dots,d(\delta)$, concluding the result. 
\hfill\qed

\medskip

\begin{Rem}
Note that (\ref{eq:BeurlingIntG}) give us the Beurling's condition in group terms.
\end{Rem}

As an immediate corollary of Theorem \ref{thm:BeurlingSchr}, we obtain the following result for compactly supported initial conditions.

\begin{Cor} \label{cor:nocompsupport}
Let $u(t,x) \in C(\R:L^2(X))$ denote the solution of (\ref{eq:SchroedingerX}) with initial condition $f \in L^2(X)$. 
Suppose that $f$ has compact support. 
If there is a time $t_0> 0$ so that $u(t_0,\cdot)$ has compact support. 
Then $f=0$ and hence $u(t,\cdot)= 0$ for all $t \in \R$.
\end{Cor}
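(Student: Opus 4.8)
The plan is to deduce Corollary \ref{cor:nocompsupport} as a direct consequence of Theorem \ref{thm:BeurlingSchr} by verifying that the hypotheses force the Beurling-type integrability condition \eqref{eq:BeurlingSchr}. Suppose $f$ is supported in a compact set $S_1 \subset X$ and $u(t_0,\cdot)$ is supported in a compact set $S_2 \subset X$. Since $\sigma$ is continuous, it is bounded on $S_1$ and on $S_2$, say $\sigma(x) \leq R_1$ for $x \in S_1$ and $\sigma(y) \leq R_2$ for $y \in S_2$. Likewise $0 < \Xi \leq 1$ everywhere by \eqref{eq:initialestXi}, so on the product $S_1 \times S_2$ the integrand in \eqref{eq:BeurlingSchr} is bounded above by $|f(x)|\,|u(t_0,y)|\,e^{R_1 R_2/(2t_0)}$.

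First I would observe that $f \in L^2(X)$ with compact support lies in $L^1(X)$ (by Cauchy--Schwarz against the indicator of $S_1$, whose measure is finite since $X$ is locally compact), and similarly $u(t_0,\cdot) \in L^1(X)$. Then
\begin{equation*}
\int_X \int_X |f(x)|\,|u(t_0,y)|\,\Xi(x)\Xi(y)\,e^{\frac{\sigma(x)\sigma(y)}{2t_0}}\,dx\,dy
\leq e^{\frac{R_1 R_2}{2t_0}} \Big(\int_{S_1} |f(x)|\,dx\Big)\Big(\int_{S_2} |u(t_0,y)|\,dy\Big) < \infty\,.
\end{equation*}
Hence the hypothesis \eqref{eq:BeurlingSchr} of Theorem \ref{thm:BeurlingSchr} is satisfied with this particular $t_0 > 0$, and the theorem yields $u(t,\cdot) = 0$ for all $t \in \R$. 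In particular $f = u(0,\cdot) = 0$, which is the assertion.

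There is essentially no obstacle here; the only point requiring a word of care is that compactly supported $L^2$ functions on $X$ are integrable and that $\sigma$, being continuous, is bounded on compact sets, so the double integral is dominated by a finite constant times $\|f\|_1 \|u(t_0,\cdot)\|_1$. One could alternatively invoke the remark in section \ref{subsection:SchroedingerX} that $u_{t_0} = 0$ for some $t_0$ already forces $f = 0$, but the cleaner route is simply to feed the compact-support hypothesis into the Beurling condition as above.
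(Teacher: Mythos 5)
Your proof is correct and follows exactly the paper's route: the paper's own proof is the one-line observation that condition (\ref{eq:BeurlingSchr}) is automatically satisfied when $f$ and $u(t_0,\cdot)$ are compactly supported, which you have simply spelled out in detail (boundedness of $\sigma$ on compact sets, $\Xi\leq 1$, and $L^2\cap\{\text{compact support}\}\subset L^1$).
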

\begin{proof}
It suffices to observe that (\ref{eq:BeurlingSchr}) is always satisfied if $f$ and $u(t_0,\cdot)$ are compactly supported.
\end{proof}

\begin{Rem}
For $f \in L^2(X)$ the Radon transform $Rf(b,\cdot)$ as well as the $K$-types
$(Rf)^\delta_{i,j}$ appearing in (\ref{eq:SchroedingerAbelmatrix}) need not be in $L^2(A)$. This can be easily seen for functions 
$f$ satisfying $\mathcal Ff=\mathcal F_A(Rf)$. Indeed the Euclidean Fourier transform is an isometric isomorphism of $L^2(A)$ onto
$L^2(\mathfrak a^*)$. According to the Plancherel theorem, the image of $L^2(X)$ under the Helgason-Fourier transform is 
$L^2(\mathfrak a^*_+\times B,|c(\l)|^{-2} \; d\l\,db)$. So $Rf(b,\cdot) \in L^2(A)$ for almost all $b \in B$ provided 
$L^2(\mathfrak a^*,|c(\l)|^{-2} \; d\l) \subset L^2(\mathfrak a^*,d\l)$. The latter condition depends on the Harish-Chandra's $c$-function appearing in the Plancherel measure. 
Using the properties of the gamma function, one can prove the asymptotic behaviour
\begin{equation} \label{eq:asymptPlancherel}
\frac{1}{|c(\l)|^{2}} \asymp \prod_{\a\in\Sigma_0^+} |\inner{\l}{\a}|^2 \prod_{\a\in \Sigma_0^+} (1+|\inner{\l}{\a}|)^{m_\a+m_{2\a}-2}\,.
\end{equation}
See e.g. \cite{Anker}, Lemma 1. 
Here $f \asymp g$ means that there exists positive constants $C_1$ and $C_2$ so that $C_1 g(\l) \leq f(\l) \leq C_2 g(\l)$ for all 
$\l$. \end{Rem}

In the rank-one case, we have for instance the following result.

\begin{Lemma}
 \label{lemma:differentsLdue}
Suppose $\dim \mathfrak a^*=1$.
Let $h \in L^2(\mathfrak a^*, |c(\l)|^{-2} \; d\l)$ be bounded on $\{\l \in \mathfrak a^*: |\l|\leq r\}$ for some $r>0$. Then 
$h \in L^2(\mathfrak a^*, d\l)$.
\end{Lemma}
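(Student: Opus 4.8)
The plan is to read off everything from the known asymptotics of the Plancherel density recorded in \eqref{eq:asymptPlancherel}. Since $\dim\mathfrak a^*=1$, the set $\Sigma_0^+$ of positive indivisible roots is a singleton, say $\Sigma_0^+=\{\a\}$, and $|\inner{\l}{\a}|$ is a fixed positive multiple of $|\l|$; hence $(1+|\inner{\l}{\a}|)\asymp(1+|\l|)$ and \eqref{eq:asymptPlancherel} reduces to
\[
\frac{1}{|c(\l)|^2}\asymp |\l|^2\,(1+|\l|)^{m_\a+m_{2\a}-2}\,,\qquad \l\in\mathfrak a^*\,,
\]
i.e. there are constants $C_1,C_2>0$ with $C_1\,|\l|^2(1+|\l|)^{m_\a+m_{2\a}-2}\le|c(\l)|^{-2}\le C_2\,|\l|^2(1+|\l|)^{m_\a+m_{2\a}-2}$ for all $\l$.

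First I would show that $|c(\l)|^{-2}$ is bounded below by a positive constant on $\{\l:|\l|\ge r\}$. Set $g(t):=t^2(1+t)^{m_\a+m_{2\a}-2}$ for $t\ge 0$. Because $\a$ is a root we have $m_\a\ge 1$, so $m_\a+m_{2\a}\ge 1$, and a direct computation of $g'$ shows that $g$ is strictly increasing on $(0,\infty)$ (the only slightly delicate case is $m_\a=1$, $m_{2\a}=0$, where the exponent $m_\a+m_{2\a}-2=-1$ is negative, but then $g(t)=t^2/(1+t)$ is still increasing). Consequently $g(t)\ge g(r)>0$ for $t\ge r$, and the lower bound in the displayed asymptotics gives $|c(\l)|^{-2}\ge C_1\,g(r)=:c_r>0$ whenever $|\l|\ge r$.

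Then I would split the integral defining the $L^2(\mathfrak a^*,d\l)$-norm of $h$ according to $|\l|\le r$ and $|\l|>r$. Since $\mathfrak a^*\cong\R$, the set $\{|\l|\le r\}$ has finite Lebesgue measure, and on it $h$ is bounded by hypothesis, so that part of the integral is finite. On $\{|\l|>r\}$ we use the lower bound just obtained:
\[
\int_{\{|\l|>r\}}|h(\l)|^2\,d\l\le \frac{1}{c_r}\int_{\{|\l|>r\}}|h(\l)|^2\,\frac{d\l}{|c(\l)|^2}\le \frac{1}{c_r}\,\|h\|_{L^2(\mathfrak a^*,|c(\l)|^{-2}d\l)}^2<\infty\,,
\]
because $h\in L^2(\mathfrak a^*,|c(\l)|^{-2}d\l)$. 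Adding the two contributions yields $h\in L^2(\mathfrak a^*,d\l)$.

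The conceptual point — and the only place needing any care — is that the singularity of the Plancherel density $|c(\l)|^{-2}$ occurs at the origin, where it behaves like $|\l|^2$ and is therefore harmless, while for $|\l|\ge r$ it is bounded away from zero; the hypothesis that $h$ is bounded near $0$ is exactly what is needed to absorb the compact region where $|c(\l)|^{-2}$ fails to dominate $1$. The minor technical obstacle is checking the monotonicity of $g$ in the exceptional multiplicity case $m_\a=1$, $m_{2\a}=0$; everything else is the routine splitting argument above.
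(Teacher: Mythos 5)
Your proof is correct and follows essentially the same route as the paper: both extract from \eqref{eq:asymptPlancherel} the lower bound $|c(\l)|^{-2}\geq C>0$ on $\{|\l|\geq r\}$ and then split the $L^2(\mathfrak a^*,d\l)$-norm into the compact piece (handled by the boundedness hypothesis) and the exterior piece (dominated by the weighted norm). The paper simply states the lower bound without the explicit monotonicity check of $t^2(1+t)^{m_\a+m_{2\a}-2}$ that you carry out, but that is only a matter of how much detail is written down.
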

\begin{proof}
The asymptotic formula (\ref{eq:asymptPlancherel}) gives in this case: $|c(\l)|^{-2} \geq C$ for $|\l|\geq r$. 
If $h \in L^2(\mathfrak a^*, |c(\l)|^{-2} \; d\l)$, then $h$ is square integrable with respect to the Lebesgue measure on 
$\{\l \in \mathfrak a^*: |\l|\geq r\}$. Hence $h \in L^2(\mathfrak a^*, d\l)$, as it is bounded on the compact set 
$\{\l \in \mathfrak a^*: |\l|\leq r\}$.
\end{proof}

The boundedness of $\mathcal Ff(b,\cdot)$ at $\l=0$ for almost all $b \in B$ is obtained under very weak assumptions on $f$.
Recall for instance that for almost all $b \in B$, the function $\mathcal Ff(b,\cdot)$ is even holomorphic in a tube around
$\mathfrak a^*$. See section \ref{subsection:Helgason-Fourier-Radon}. Moreover, in this case $\mathcal Ff(b,\cdot)$ vanishes at infinity by Fatou's lemma for $\mathcal F$. We can then prove that $Rf\in L^2(A)$ for $f \in L^1(X) \cap L^2(X)$ whenever the root system satisfies the following conditions:

(C) \qquad Either there is no $\alpha \in \Sigma^+_0$ with multiplicity $m_\a=1$, or $2\alpha \in \Sigma^+$.

When condition (C) is met, then we have 
\begin{equation}\label{eq:est-partial-Plancherel-condition-C}
\prod_{\a\in \Sigma_0^+} (1+|\inner{\l}{\a}|)^{m_\a+m_{2\a}-2} \geq C
\end{equation}
 for all $\l \in \mathfrak a^*$. The result $Rf \in L^2(A)$ for $f \in L^2(X) \cap L^1(X)$ is then a consequence of the
above discussion and the following lemma. 

\begin{Lemma}
Let $\Sigma$ be a root system satisfying condition (C). 
Suppose $h \in L^2(\mathfrak a^*, |c(\l)|^{-2} \; d\l)$ is continuous and vanishes at infinity.  
Then $h \in L^2(\mathfrak a^*, d\l)$.
\end{Lemma}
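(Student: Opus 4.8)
The plan is to exploit the product structure in the asymptotic formula (\ref{eq:asymptPlancherel}) for the Plancherel density, combined with condition (C), to reduce the claim to a statement about the polynomial factors $\prod_{\a\in\Sigma_0^+}|\inner{\l}{\a}|^2$. First I would split $\mathfrak a^*$ into a bounded region $\{|\l|\le r\}$ and its complement. On the unbounded piece, I would use that by (\ref{eq:asymptPlancherel}) and (\ref{eq:est-partial-Plancherel-condition-C}) we have $|c(\l)|^{-2}\ge C\prod_{\a\in\Sigma_0^+}|\inner{\l}{\a}|^2$ for all $\l$, so square integrability of $h$ against $|c(\l)|^{-2}\,d\l$ forces $\int |h(\l)|^2 \prod_{\a}|\inner{\l}{\a}|^2\,d\l<\infty$. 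On the part of $\{|\l|\ge r\}$ that stays uniformly bounded away from all the root hyperplanes $\inner{\l}{\a}=0$, the product $\prod_{\a}|\inner{\l}{\a}|^2$ is bounded below by a positive constant, so $h\in L^2(\,d\l)$ there immediately.

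The remaining work is near the union of the hyperplanes $\{\inner{\l}{\a}=0\}$, where $\prod_{\a}|\inner{\l}{\a}|^2$ degenerates. Here I would use the hypothesis that $h$ is continuous and vanishes at infinity: on the bounded region $\{|\l|\le r\}$, $h$ is continuous on a compact set hence bounded, so $h\in L^2(\,d\l)$ there without any need for the Plancherel weight. For the unbounded tubular neighbourhoods of the hyperplanes, I would note that $h$ is bounded (being continuous and vanishing at infinity, hence bounded on all of $\mathfrak a^*$), and the set where $\prod_{\a}|\inner{\l}{\a}|$ is small \emph{and} $|\l|$ is large decomposes, near a generic point of a hyperplane, into a bounded "transverse" direction times the hyperplane itself; using the bound on $h$ on this region and patching with the estimate away from the hyperplanes handles the tails. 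The point is that the Lebesgue measure of the bad set intersected with any annulus is controlled, and boundedness of $h$ suffices there, while far from the hyperplanes the weight $|c(\l)|^{-2}$ is comparable to $\prod_{\a}|\inner{\l}{\a}|^2$ which is then bounded below.

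A cleaner way to organize the last step, which I would prefer to write out: fix a small $\e>0$ and split $\{|\l|\ge r\}$ into $U_\e=\{|\l|\ge r:\ \prod_{\a}|\inner{\l}{\a}|\ge\e\}$ and its complement $V_\e$. On $U_\e$, $|c(\l)|^{-2}\ge C\e^2>0$ by the discussion above, so $h\in L^2(U_\e,\,d\l)$ from $h\in L^2(|c(\l)|^{-2}d\l)$. On $V_\e$, one estimates $\int_{V_\e}|h|^2\,d\l\le \|h\|_\infty^2\,|V_\e\cap\{|\l|\le N\}| + \sup_{|\l|\ge N}|h(\l)|^2\cdot(\text{something})$; but this last term is not obviously finite, so instead I would intersect with annuli: on $V_\e\cap\{N\le|\l|\le 2N\}$ the measure is $O(N^{\dim\mathfrak a^*-1})$ up to constants depending on $\e$ (a tubular neighbourhood of finitely many hyperplanes), while $\sup$ of $|h|^2$ there tends to $0$ as $N\to\infty$, giving a convergent sum over dyadic annuli — unless $\dim\mathfrak a^*=1$, in which case $V_\e$ near infinity is simply empty for $\e$ small (the hyperplanes are points), so there is nothing to do, matching Lemma \ref{lemma:differentsLdue}.

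The main obstacle is precisely this analysis near infinity along the root hyperplanes: continuity and vanishing at infinity of $h$ must be leveraged to beat the polynomial decay of the measure of the tubular neighbourhoods, and one has to be a little careful that the constant in (\ref{eq:est-partial-Plancherel-condition-C}) and in $|c(\l)|^{-2}\ge C\prod_{\a}|\inner{\l}{\a}|^2$ really is uniform over all of $\mathfrak a^*$ (this is where condition (C) is used — it removes the factors with exponent $m_\a+m_{2\a}-2<0$, which would otherwise vanish at infinity along hyperplanes). Everything else — the splitting, the compact-set boundedness, the comparison of measures — is routine. I expect the write-up to be short once the decomposition $\mathfrak a^*=\{|\l|\le r\}\cup U_\e\cup V_\e$ is in place.
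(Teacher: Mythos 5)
Your decomposition of $\mathfrak a^*$ into a ball, a region $U_\e$ bounded away from the root hyperplanes, and the remaining set $V_\e$ is essentially the paper's decomposition (the paper takes $\e=1$ and writes $\Omega_1=\{\l:\prod_{\a\in\Sigma_0^+}|\inner{\l}{\a}|\le 1\}$), and your treatment of the first two pieces — boundedness of $h$ on the compact ball, and the lower bound $|c(\l)|^{-2}\ge C\e^2$ on $U_\e$ coming from (\ref{eq:asymptPlancherel}) and (\ref{eq:est-partial-Plancherel-condition-C}) — coincides with the paper's. The gap is in your treatment of $V_\e$ when $\dim\mathfrak a^*\ge 2$. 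You propose to bound the measure of $V_\e\cap\{N\le|\l|\le 2N\}$ by $O(N^{n-1})$, $n=\dim\mathfrak a^*$, and to multiply by $\sup|h|^2$ on that annulus. But vanishing at infinity gives no rate of decay: setting $\e_j:=\sup_{2^j\le|\l|\le2^{j+1}}|h|^2$, the sum $\sum_j \e_j\,2^{j(n-1)}$ can diverge even though $\e_j\to0$ (e.g.\ $\e_j=1/j$ with $n=2$), so the dyadic sum does not close. (Your remark that $V_\e$ is bounded in rank one is correct, and that case is exactly Lemma \ref{lemma:differentsLdue}.)

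The missing ingredient is a much stronger measure estimate, which is where the paper's proof actually lives: by Lemma 5 of \cite{Anker}, the \emph{entire} set $\Omega_1=\{\l\in\mathfrak a^*:\prod_{\a\in\Sigma_0^+}|\inner{\l}{\a}|\le1\}$ has finite Lebesgue measure. Your per-annulus bound $O(N^{n-1})$ is the estimate for a fixed-width tube around the hyperplanes, but $V_\e$ is cut out by the product condition, so near a point of $\inner{\l}{\a_0}=0$ at distance $N$ from the origin the remaining factors are of size $\sim N$ and the tube narrows like $N^{-(|\Sigma_0^+|-1)}$; this is what makes the total measure finite and your bound far too generous to be useful. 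Once $|\Omega_1|<\infty$ is in hand, continuity of $h$ plus vanishing at infinity give only that $h$ is \emph{bounded} on $\Omega_1\cap\{|\l|\ge1\}$ — and boundedness on a set of finite measure is all that is needed; no decay rate enters. With that single replacement (finite measure of the bad set instead of dyadic annuli), your argument becomes the paper's proof.
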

\begin{proof}
Set $\Pi(\l)=\prod_{\a\in \Sigma_0^+} \inner{\l}{\a}$. By Lemma 5 in \cite{Anker}, 
the set $\Omega_1=\{\l\in\mathfrak a^*:|\Pi(\l)|\leq 1\}$ has finite Lebesgue measure.
Let $B_1=\{\l \in \mathfrak a^*:\|\l\| \leq 1\}$ and $C_1=\mathfrak a^* \setminus B_1$.
Write 
$$\int_{\mathfrak a^*} |h(\l)|^2 \; d\l =\int_{B_1} |h(\l)|^2 \; d\l + \int_{\Omega_1\cap C_1}  |h(\l)|^2 \; d\l + 
\int_{(\mathfrak a^* \setminus \Omega_1)\cap C_1}  |h(\l)|^2 \; d\l\,.$$
The first integral is finite as $h$ is continuous, hence bounded on the compact $B_1$. 
The second integral is also finite as $h$ is continuous and vanishes at infinity, so it is bounded in $C_1$, and $\Omega_1\cap C_1$ is a subset of $\Omega_1$, hence of finite measure. For the convergence of the third integral, we use condition (C). In fact, condition (C) yields 
(\ref{eq:est-partial-Plancherel-condition-C}). Moreover, on $\mathfrak a^* \setminus \Omega_1$, we have $|\Pi(\l)|\geq 1$. 
Thus $|c(\l)|^{-2} \geq C$ for all $\l \in (\mathfrak a^* \setminus \Omega_1)\cap C_1$.
Consequently, 
$$\int_{(\mathfrak a^* \setminus \Omega_1)\cap C_1}  |h(\l)|^2 \; d\l \leq 
C^{-1} \int_{(\mathfrak a^* \setminus \Omega_1)\cap C_1}  |h(\l)|^2 \; \frac{d\l}{|c(\l)|^2}\,< +\infty.$$
\end{proof}

\section{Applications}
\label{section:applications}

Let $X$ be a Riemannian symmetric space of the noncompact type. 
In this section we collect some uniqueness conditions for the solution of the Schr\"odinger equation (\ref{eq:SchroedingerX}) on 
$X$ which can be deduced from Theorem \ref{thm:BeurlingSchr}. They correspond to uncertainty principle conditions of 
Gelfand-Shilov type, Cowling-Price type and Hardy type. These results are parallel to the classical results of 
uncertainty principles for the Fourier transform on $\R^n$. Recall from (\ref{eq:initialestXi}) that 
$0 < \Xi(x) \leq 1$ for all $x \in X$.

\begin{Thm}[Gelfand-Shilov type] 
 \label{thm:GelfandShilov}
Let $u(t,x) \in C(\R : L^2(X))$ be the solution of (\ref{eq:SchroedingerX}) with initial condition $f \in L^2(X)$.
Suppose there exists positive constants $\alpha$, $\beta$ and a time $t_0>0$ so that
\begin{equation}\label{eq:GSXi}
\int_X |f(x)| \Xi(x) e^{\frac{\alpha^p}{p} \sigma^p(x)} \,dx < \infty \qquad \text{and} \qquad 
\int_X |u(t_0,x)| \Xi(x) e^{\frac{\beta^q}{q} \sigma^q(x)} \,dx < \infty 
\end{equation}
where $1<p<\infty$ and $\dfrac{1}{p}+\dfrac{1}{q}=1$\,. 
If $2t_0 \alpha\beta \geq 1$, then $f=0$ and hence $u(t,\cdot)=0$ for all $t\in \R$.
\end{Thm}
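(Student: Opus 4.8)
The plan is to deduce the statement directly from Theorem \ref{thm:BeurlingSchr} by showing that the Gelfand--Shilov hypotheses (\ref{eq:GSXi}) together with the condition $2t_0\alpha\beta\ge 1$ force the Beurling-type condition (\ref{eq:BeurlingSchr}). The only tool needed is Young's inequality: for nonnegative reals $s,r$ and conjugate exponents $1<p<\infty$, $\frac1p+\frac1q=1$, one has $\alpha\beta\,sr\le\frac{\alpha^p}{p}s^p+\frac{\beta^q}{q}r^q$.

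First I would apply this pointwise with $s=\sigma(x)$ and $r=\sigma(y)$. Since $2t_0\alpha\beta\ge 1$ gives $\frac{1}{2t_0}\le\alpha\beta$, it follows that
\begin{equation*}
\frac{\sigma(x)\sigma(y)}{2t_0}\ \le\ \alpha\beta\,\sigma(x)\,\sigma(y)\ \le\ \frac{\alpha^p}{p}\,\sigma^p(x)+\frac{\beta^q}{q}\,\sigma^q(y)
\end{equation*}
for all $x,y\in X$, and hence, after exponentiating,
$e^{\sigma(x)\sigma(y)/(2t_0)}\le e^{\frac{\alpha^p}{p}\sigma^p(x)}\,e^{\frac{\beta^q}{q}\sigma^q(y)}$.

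Next I would insert this bound into the left-hand side of (\ref{eq:BeurlingSchr}); the integrand being nonnegative and measurable, Tonelli's theorem lets me factor the double integral:
\begin{align*}
\int_X\int_X |f(x)|\,|u(t_0,y)|\,\Xi(x)\Xi(y)\,e^{\frac{\sigma(x)\sigma(y)}{2t_0}}\,dx\,dy
&\le \left(\int_X|f(x)|\,\Xi(x)\,e^{\frac{\alpha^p}{p}\sigma^p(x)}\,dx\right)\\
&\quad\times\left(\int_X|u(t_0,y)|\,\Xi(y)\,e^{\frac{\beta^q}{q}\sigma^q(y)}\,dy\right).
\end{align*}
By (\ref{eq:GSXi}) both factors on the right are finite, so (\ref{eq:BeurlingSchr}) holds. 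Theorem \ref{thm:BeurlingSchr} then yields $u(t,\cdot)=0$ for all $t\in\R$, and in particular $f=u(0,\cdot)=0$.

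There is essentially no serious obstacle here; the one point requiring care is bookkeeping the exponents so that the threshold $2t_0\alpha\beta\ge 1$ is exactly what makes the Young estimate compatible with the kernel $e^{\sigma(x)\sigma(y)/(2t_0)}$ appearing in (\ref{eq:BeurlingSchr}). (Note also that the $\Xi$-factors in (\ref{eq:GSXi}) match precisely the factor $\Xi(x)\Xi(y)$ in (\ref{eq:BeurlingSchr}), so no further estimate on $\Xi$ is needed.) The same argument works verbatim if one replaces the hypothesis $2t_0\alpha\beta\ge 1$ by its strict form.
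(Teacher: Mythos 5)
Your proof is correct and follows exactly the same route as the paper: Young's inequality $\frac{\sigma(x)\sigma(y)}{2t_0}\leq \alpha\beta\,\sigma(x)\sigma(y)\leq \frac{\alpha^p}{p}\sigma^p(x)+\frac{\beta^q}{q}\sigma^q(y)$ reduces the Gelfand--Shilov hypotheses to the Beurling condition (\ref{eq:BeurlingSchr}), and Theorem \ref{thm:BeurlingSchr} finishes the argument. No issues.
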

\begin{proof}
We have $\frac{\sigma(x)\sigma(y)}{2t_0}\leq  \alpha\beta \sigma(x)\sigma(y)\leq \frac{\alpha^p}{p} \sigma^p(x)+\frac{\beta^q}{q} \sigma^q(y)$. The inequalities (\ref{eq:GSXi}) imply then Beurling's condition (\ref{eq:BeurlingSchr}).
\end{proof}

\begin{Thm}[Cowling-Price type] 
 \label{thm:CowlingPrice}
Let $u(t,x) \in C(\R : L^2(X))$ be the solution of (\ref{eq:SchroedingerX}) with initial condition $f \in L^2(X)$.
Suppose there exists positive constants $a$, $b$ and a time $t_0>0$ so that
\begin{equation} \label{eq:CP}
\int_X \left( |f(x)| e^{a \sigma^2(x)}\right)^p \,dx < \infty \qquad \text{and} \qquad 
\int_X \left(|u(t_0,x)| e^{b \sigma^2(x)}\right)^q \,dx < \infty 
\end{equation}
where $1\leq p, q \leq \infty$. 
If $16t_0^2 ab > 1$, then $f=0$ and hence $u(t,\cdot)=0$ for all $t\in \R$.
\end{Thm}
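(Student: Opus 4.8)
The plan is to reduce the Cowling--Price type condition \eqref{eq:CP} to the Gelfand--Shilov type condition \eqref{eq:GSXi}, and hence to Beurling's condition \eqref{eq:BeurlingSchr}, just as in the Euclidean theory where Cowling--Price follows from Beurling. The key point is that the $L^p$-integrability of $|f(x)|e^{a\sigma^2(x)}$ can be traded for an $L^1$-bound of $|f(x)|\Xi(x)e^{A\sigma^2(x)}$ with a slightly smaller exponent $A<a$, at the cost of a H\"older loss that is absorbed because the Gaussian weight grows faster than any factor of the form $(1+\sigma)^{N}$ and faster than any exponential $e^{c\sigma}$ coming from $\Xi^{-1}$ and from the volume growth of $X$.

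First I would treat the endpoint cases $p=\infty$ or $q=\infty$ separately (there the $L^p$-condition is a pointwise bound $|f(x)|\le M e^{-a\sigma^2(x)}$, and one argues directly). For $1\le p<\infty$ with conjugate exponent $p'$, write
\begin{equation*}
\int_X |f(x)|\,\Xi(x)\,e^{A\sigma^2(x)}\,dx
=\int_X \Big(|f(x)|e^{a\sigma^2(x)}\Big)\,\Xi(x)\,e^{(A-a)\sigma^2(x)}\,dx
\end{equation*}
and apply H\"older's inequality with exponents $p$ and $p'$. The first factor is in $L^p(X)$ by hypothesis; the second factor, $\Xi(x)e^{(A-a)\sigma^2(x)}$, lies in $L^{p'}(X)$ because, using the polar integration formula \eqref{eq:intformulapolar}, the estimate \eqref{eq:estimateXi} for $\Xi$ (so $\Xi(x)^{p'}$ contributes $e^{-p'\rho(H)}(1+|H|)^{p'd}$) and $\delta(a)\le e^{2\rho(H)}$, the integral
\begin{equation*}
\int_{A^+}\Xi(\exp H)^{p'}e^{p'(A-a)|H|^2}\,\delta(\exp H)\,dH
\le C\int_{A^+}(1+|H|)^{p'd}e^{(2-p')\rho(H)}e^{p'(A-a)|H|^2}\,dH
\end{equation*}
converges for any $A<a$, since the negative quadratic term in the exponent dominates. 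Thus for every $A<a$ one gets $\int_X|f(x)|\Xi(x)e^{A\sigma^2(x)}\,dx<\infty$, i.e.\ a Gelfand--Shilov bound with exponent parameter $\alpha$ determined by $\tfrac{\alpha^2}{2}=A$, so $\alpha^2=2A$. Doing the same for $u_{t_0}$ with the exponent $q$ gives $\int_X|u(t_0,x)|\Xi(x)e^{B\sigma^2(x)}\,dx<\infty$ for every $B<b$, i.e.\ $\beta^2=2B$.

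Now apply Theorem \ref{thm:GelfandShilov} with $p=q=2$: the uniqueness conclusion holds provided $2t_0\alpha\beta\ge 1$, i.e.\ $4t_0^2\alpha^2\beta^2\ge 1$, i.e.\ $16t_0^2 AB\ge 1$. Since by hypothesis $16t_0^2 ab>1$, I can choose $A<a$ and $B<b$ close enough to $a$ and $b$ that $16t_0^2 AB\ge 1$ still holds; this is exactly where the strict inequality in the statement is used, and it is the analogue of the strict-inequality phenomenon familiar from the Cowling--Price theorem. The main obstacle is the verification that $\Xi(x)e^{(A-a)\sigma^2(x)}\in L^{p'}(X)$ uniformly enough (including the endpoint $p=1$, where $p'=\infty$ and one needs $\Xi(x)e^{(A-a)\sigma^2(x)}$ merely bounded, which is immediate from \eqref{eq:initialestXi}); once that integrability is in hand, the rest is bookkeeping with H\"older and the already-proved Theorem \ref{thm:GelfandShilov}.
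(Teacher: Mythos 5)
Your argument is correct and follows essentially the same route as the paper: reduce \eqref{eq:CP} to the Gelfand--Shilov condition \eqref{eq:GSXi} with $p=q=2$ by choosing $A<a$, $B<b$ with $16t_0^2AB\geq 1$ and applying H\"older's inequality, the only cosmetic difference being that the paper first discards $\Xi$ via $\Xi\leq 1$ and checks $e^{(A-a)\sigma^2}\in L^{p'}(X)$, whereas you keep $\Xi$ in the second H\"older factor. Your separate treatment of the endpoints $p=\infty$ or $q=\infty$ is unnecessary, since the same H\"older estimate covers them.
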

\begin{proof}
Choose $A, B$ so that $0<A<a$, $0<B<b$ and $16t_0^2 AB > 1$. Let $p'$ and $q'$ so that 
$\dfrac{1}{p}+\dfrac{1}{p'}=1$ and $\dfrac{1}{q}+\dfrac{1}{q'}=1$.
Set $e_\eta(x)=e^{\eta \sigma^2(x)}$. Observe that, by 
(\ref{eq:intformulapolar}) and 
(\ref{eq:delta}), the function  $e_\eta \in L^p(X)$ for all $p \in [1,+\infty]$ if $\eta <0$.
Indeed, by $K$-invariance of $\sigma$, 
\begin{equation*}
\int_X |e_\eta(x)|^p \, dx \leq c \int_{\mathfrak a^+} e^{p\eta |H|^2} \; \delta(H)\, dH
 \leq c \int_{\mathfrak a^+} e^{p\eta |H|^2+2\rho(H)} \; dH <\infty\,.
\end{equation*}
By (\ref{eq:initialestXi}), H\"older inequality and the assumption, 
\begin{alignat*}{2}
&\normuno{f \Xi e_A} &&\leq \normuno{f e_A}\leq \|f e_a\|_p \|e_{(A-a)}\|_{p'} < \infty\,,\\ 
&\normuno{u(t_0,\cdot)\Xi e_B}
                      &&\leq \normuno{u(t_0,\cdot) e_B}
                      \leq
                      \|u(t_0,\cdot) e_b\|_q \|e_{(B-b)}\|_{q'} < \infty\,.
\end{alignat*}
The stated result is then a consequence of Theorem \ref{thm:GelfandShilov} with $p=q=2$, $A=\alpha^2/2$ and $B=\beta^2/2$. 
\end{proof}

\begin{Rem}
\label{remark:uncert-factors}
Let $\psi$ be a function on $X$ satisfying $\psi(x) e^{\nu \sigma^2(x)} \in L^\infty(X)$ and $\psi(x)^{-1} e^{\nu \sigma^2(x)} \in L^\infty(X)$ for all $\nu <0$. 
Because of the strict inequality $16t_0^2ab>1$ in Theorem \ref{thm:CowlingPrice}, we can replace the
functions $e^{\eta\sigma^2(x)}$, with $\eta \in \{a,b\}$, measuring the growth of $f$ and $u_{t_0}$, respectively, with $\psi(x) e^{\eta\sigma^2(x)}$.
For instance, we can choose $\psi(x)=\Xi(x)^M (1+\sigma(x))^N$ for some fixed integers 
$M$ and $N$. Similar remarks apply to Corollary \ref{cor:GelfandShilov} and Theorem \ref{thm:Hardy} below. 
\end{Rem}

\begin{Cor}
 \label{cor:GelfandShilov}
Let $u(t,x) \in C(\R : L^2(X))$ be the solution of (\ref{eq:SchroedingerX}) with initial condition $f \in L^2(X)$.
Suppose there exists positive constants $a$, $A$, $b$, $B$ and a time $t_0>0$ so that for all $x \in X$ 
\begin{equation}
|f(x)| \leq A\,  e^{-\frac{a^p}{p} \sigma^p(x)} \qquad \text{and} \qquad 
|u(t_0,x)| \leq B\, e^{-\frac{b^q}{q} \sigma^q(x)}  
\end{equation}
where $1<p<\infty$ and $\dfrac{1}{p}+\dfrac{1}{q}=1$\,. 
If $2t_0 ab > 1$, then $f=0$ and hence $u(t,\cdot)=0$ for all $t\in \R$.
\end{Cor}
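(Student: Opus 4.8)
The plan is to deduce the corollary directly from Theorem~\ref{thm:GelfandShilov}, the point being that the pointwise bounds on $f$ and on $u(t_0,\cdot)$ are strong enough to yield the integral conditions \eqref{eq:GSXi} \emph{for slightly smaller exponents}, while the strict inequality $2t_0ab>1$ leaves room to preserve the product condition. First I would use the continuity of $(\alpha,\beta)\mapsto 2t_0\alpha\beta$ to choose positive constants $\alpha<a$ and $\beta<b$ with $2t_0\alpha\beta>1$ still. With these choices, the hypothesis $|f(x)|\le A\,e^{-\frac{a^p}{p}\sigma^p(x)}$ together with $\Xi(x)\le 1$ from \eqref{eq:initialestXi} gives
\[
|f(x)|\,\Xi(x)\,e^{\frac{\alpha^p}{p}\sigma^p(x)}\le A\,e^{-\frac{a^p-\alpha^p}{p}\sigma^p(x)},
\]
and the right-hand exponent is a negative multiple of $\sigma^p(x)$ since $\alpha<a$; likewise $|u(t_0,x)|\,\Xi(x)\,e^{\frac{\beta^q}{q}\sigma^q(x)}\le B\,e^{-\frac{b^q-\beta^q}{q}\sigma^q(x)}$, with a negative multiple of $\sigma^q(x)$ in the exponent.

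Next I would check that $\int_X e^{-c\sigma^r(x)}\,dx<\infty$ for every $c>0$ and every $r>1$; this covers the two cases $r=p$ and $r=q$, both of which satisfy $r>1$ because $1<p<\infty$ and $\frac1p+\frac1q=1$. Exactly as in the proof of Theorem~\ref{thm:CowlingPrice}, one uses the polar integration formula \eqref{eq:intformulapolar}, the $K$-invariance of $\sigma$, the identity $\sigma(\exp H)=|H|$, and the estimate $\delta(H)\le e^{2\rho(H)}$ from \eqref{eq:delta} to get
\[
\int_X e^{-c\sigma^r(x)}\,dx\le c'\int_{\mathfrak a^+} e^{-c|H|^r+2\rho(H)}\,dH<\infty,
\]
the finiteness being clear because $|H|^r$ dominates the linear term $2\rho(H)$ as $|H|\to\infty$ when $r>1$. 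Combining this with the two pointwise estimates above shows that both integrals in \eqref{eq:GSXi} are finite for the exponents $\alpha,\beta$.

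Since $2t_0\alpha\beta>1\ge 1$, Theorem~\ref{thm:GelfandShilov} then gives $f=0$, and hence $u(t,\cdot)=0$ for all $t\in\R$ by \eqref{eq:FourierHelgasonrelation}. I do not expect any serious obstacle here: the whole argument is a routine reduction. The only step requiring a little care is the passage to strictly smaller exponents $\alpha<a$, $\beta<b$ --- this is precisely what the strict inequality $2t_0ab>1$ is for, and without it one would be left with the borderline case $2t_0\alpha\beta=1$, which Theorem~\ref{thm:GelfandShilov} does not cover.
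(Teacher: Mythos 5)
Your proposal is correct and follows the same route as the paper: choose $0<\alpha<a$, $0<\beta<b$ with $2t_0\alpha\beta>1$ and verify that the pointwise bounds then yield the integral conditions \eqref{eq:GSXi}, so Theorem~\ref{thm:GelfandShilov} applies. The paper states this in one line; your added verification that $\int_X e^{-c\sigma^r(x)}\,dx<\infty$ for $c>0$, $r>1$ via \eqref{eq:intformulapolar} and \eqref{eq:delta} is exactly the right justification.
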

\begin{proof}
Choose $\alpha,\beta$ so that $0<\alpha<a$, $0<\beta<b$ and $2t_0 \alpha\beta > 1$. Then $f$ and $u(t_0,\cdot)$ satisfy 
(\ref{eq:GSXi}).
\end{proof}

\begin{Thm}[Hardy type]
 \label{thm:Hardy}
Let $f$ be a measurable function on $X$ so that there exists positive constants $A$ and $\alpha$ so that 
\begin{equation} \label{eq:estfHardy}
|f(x)|\leq A \, e^{-\alpha\sigma^2(x)}
\end{equation} 
for all $x \in X$.
Then $f \in L^2(X)$. 
Let $u(t,x) \in C(\R : L^2(X))$ be the solution of (\ref{eq:SchroedingerX}) with initial condition $f$.
Suppose, moreover, that there is a time $t_0>0$ and positive constants $B$ and $\beta$ so that 
\begin{equation} \label{eq:estuHardy}
|u(t_0,x)|\leq B\, e^{-\beta \sigma^2(x)} 
\end{equation}
If $16 \alpha\beta  t_0^2 >1$, then $u(t,\cdot)= 0$ for all $t\in \R$.
\end{Thm}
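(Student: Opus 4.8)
The plan is to reduce the theorem to Theorem~\ref{thm:CowlingPrice} (equivalently, to Theorem~\ref{thm:GelfandShilov} with $p=q=2$); the strict inequality $16\alpha\beta t_0^2>1$ is precisely what provides enough room in the exponents, and underneath everything lies the convergence of a Gaussian double integral, which is Beurling's condition (\ref{eq:BeurlingSchr}).

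First I would check that $f\in L^2(X)$, so that the solution $u\in C(\R:L^2(X))$ of (\ref{eq:SchroedingerX}) is well defined and (\ref{eq:estuHardy}) is meaningful for it. Using the polar integration formula (\ref{eq:intformulapolar}), the $K$-invariance of $\sigma$, the elementary estimate $\delta(\exp H)\le C\,e^{2\rho(H)}$ coming from (\ref{eq:delta}), and $\rho(H)\le|\rho|\,|H|$, the bound (\ref{eq:estfHardy}) gives
\[
\int_X|f(x)|^2\,dx\ \le\ A^2C\int_{\mathfrak a^+}e^{-2\alpha|H|^2+2|\rho|\,|H|}\,dH\ <\ \infty,
\]
since the Gaussian decay swamps the linear term; this is exactly the computation showing $e_\eta\in L^p(X)$ for $\eta<0$ used in the proof of Theorem~\ref{thm:CowlingPrice}.

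Next, since $(a,b)\mapsto ab$ is continuous and $16\alpha\beta t_0^2>1$ strictly, I can choose constants $a,b$ with $0<a<\alpha$, $0<b<\beta$ and still $16abt_0^2>1$ (e.g. $a=\alpha(1-\e)$, $b=\beta(1-\e)$ for $\e>0$ small). By the same polar-coordinate estimate, (\ref{eq:estfHardy}) and (\ref{eq:estuHardy}) yield
\[
\int_X\bigl(|f(x)|e^{a\sigma^2(x)}\bigr)^2dx\ \le\ A^2C\int_{\mathfrak a^+}e^{2(a-\alpha)|H|^2+2|\rho|\,|H|}\,dH\ <\ \infty
\]
because $a-\alpha<0$, and likewise $\int_X(|u(t_0,x)|e^{b\sigma^2(x)})^2dx<\infty$. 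Thus $f$ and $u_{t_0}$ satisfy (\ref{eq:CP}) with $p=q=2$ and $16abt_0^2>1$, and Theorem~\ref{thm:CowlingPrice} gives $f=0$, hence $u(t,\cdot)=0$ for all $t\in\R$.

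I do not anticipate a genuine obstacle: this is the ``easy'' direction, and everything is bookkeeping with (\ref{eq:intformulapolar}) and the estimate $\delta(\exp H)\le Ce^{2\rho(H)}$. If one prefers to bypass Theorem~\ref{thm:CowlingPrice}, one verifies Beurling's condition (\ref{eq:BeurlingSchr}) directly: bounding $\Xi\le1$ by (\ref{eq:initialestXi}), inserting (\ref{eq:estfHardy}) and (\ref{eq:estuHardy}), and passing to polar coordinates in both variables, the double integral is dominated by
\[
C\int_0^\infty\!\!\int_0^\infty s^{\,k}r^{\,k}\,e^{-\alpha s^2-\beta r^2+\frac{sr}{2t_0}+2|\rho|(s+r)}\,ds\,dr,\qquad k=\dim\mathfrak a-1,
\]
which is finite precisely when the quadratic form $\alpha s^2+\beta r^2-\tfrac{1}{2t_0}sr$ is positive definite, i.e.\ when $\alpha\beta>\tfrac{1}{16t_0^2}$; then Theorem~\ref{thm:BeurlingSchr} applies. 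The fact that $16\alpha\beta t_0^2>1$ is the sharp threshold for this positive definiteness foreshadows the optimality statement (Theorem~\ref{thm:complexnonunique}).
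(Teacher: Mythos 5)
Your argument is correct and follows essentially the same strategy as the paper: establish $f\in L^2(X)$ via the polar integration formula and the bound $\delta(\exp H)\le e^{2\rho(H)}$, then feed the Gaussian pointwise bounds into one of the earlier consequences of Theorem \ref{thm:BeurlingSchr}, using the strict inequality $16\alpha\beta t_0^2>1$ to shrink the exponents. The only (immaterial) difference is the intermediate stop: the paper invokes Corollary \ref{cor:GelfandShilov} with $p=q=2$ and $\alpha=a^2/2$, $\beta=b^2/2$, which needs no further integral estimate since the Hardy hypotheses are literally its pointwise hypotheses, whereas you route through Theorem \ref{thm:CowlingPrice} (or verify Beurling's condition directly via the positive definiteness of $\alpha s^2+\beta r^2-\tfrac{sr}{2t_0}$), both of which are equally valid.
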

\begin{proof}
By the $K$-binvariance of $\sigma$ and the integral formula (\ref{eq:intformulapolar}) and 
(\ref{eq:delta}), the growth condition on $f$ implies 
\begin{align*}
\int_X |f(x)|^2 \, dx &\leq A^2 \int_X  e^{-2\alpha\sigma^2(x)} \; dx 
\leq  c A^2 \int_{\mathfrak a^+} e^{-2\alpha|H|^2} \delta(H) \; dH \\
&\leq c A^2 \int_{\mathfrak a^+} e^{-2\alpha|H|^2+2\rho(H)} \; dH <\infty\,.
\end{align*}
Thus $f \in L^2(X)$. The uniqueness property is a consequence of Corollary \ref{cor:GelfandShilov} 
with $p=q=2$ and 
$\alpha=a^2/2$, $\beta=b^2/2$.
\end{proof}

\begin{Rem}
Under the additional assumptions that $f$ is $K$-invariant and $G$ is endowed with a complex structure, Theorem \ref{thm:Hardy} was proven in \cite{Chanillo} using the explicit expression of the elementary spherical functions. In fact, the Euclidean reduction via Radon transform provides an elementary proof of Theorem \ref{thm:Hardy} for arbitrary $X$ and $f$. 
Observe first that if $h$ is a measurable function on $X$ and there is $C >0$ for which 
$|h(x)|\leq A \, e^{-C\sigma^2(x)}$ for all $x \in X$, then $h \in L^1(X) \cap L^2(X)$.
Suppose now that $f$ and $u_{t_0}$ satisfy (\ref{eq:estfHardy}) and (\ref{eq:estuHardy}), respectively.
Choose $a,b$ so that $0< a < \alpha$,  $0< b < \beta$ and $16 ab t_0^2>1$.
By Proposition 1 in \cite{Sengupta2002}, there are constants $A', B'>0$ so that for all $(H,x)\in \mathfrak a \times B$ we have 
$|Rf(x,\exp H)|\leq A' \, e^{-a |H|^2}$ and $|Ru_{t_0}(x,\exp H)|\leq B' \, e^{-b |H|^2}$. 
By (\ref{eq:FourierHelgasonrelation}) and the fact that $\mathcal F=\mathcal F_A \circ R$ on $L^1(X)$, 
for all $x\in B$, we have that  
$Ru_t(x,\cdot)$ is the solution of the damped Schr\"odinger equation on $A \equiv \R^n$ with initial condition $Rf(x,\cdot)\in L^2(\R^n)$ and damping parameter $|\rho|^2$. The Hardy type uniqueness theorem for the Schr\"odinger equation on $\R^n$ (Theorem 2 in \cite{Chanillo}) yields then $Rf(x,\cdot)=0$ for all $x \in B$. Since $R$ is injective on $L^1(X)$, we conclude that $f=0$ 
and hence $u(t,\cdot)=0$ for all $t\in \R$.
\end{Rem}

In \cite{Chanillo}, the value $t_0$ given in Theorem \ref{thm:Hardy} by the inequality $16\alpha\beta t_0^2 >1$  was proven to be optimal. 
Indeed, for the symmetric space $X=SL(2,\C)/SU(2)$, Chanillo gave the following example. 
Let $f$ be the function on $X$ which agrees on the maximally flat geodesic submanifold $A\equiv \R$
with the function  $e^{-x^2-i x^2/4}$. Then $f$ satisfies (\ref{eq:estfHardy}) with $\alpha=1$. The solution $u(t,\cdot)$ of the Schr\"odinger equation (\ref{eq:SchroedingerX}) with initial condition $f$ 
is not identically zero even if it satisfies (\ref{eq:estuHardy}) for $\beta=1/16$ at $t_0=(16\alpha\beta)^{-1/2}=1$. Thus the uniqueness property fails in this case for some 
$\alpha$, $\beta$ and $t_0$ with $16\alpha\beta t_0^2=1$.

In the rest of this section we provide additional information on the optimality of $t_0$.
We consider some Hardy type uniqueness conditions which are slightly more restrictive than those stated in Theorem \ref{thm:Hardy}. So they still imply that the solutions of (\ref{eq:SchroedingerX}) are identically zero provided the condition $16\alpha\beta t_0^2 >1$
holds. We then characterize the non-unique solutions in the case $t_0=(16\alpha\beta)^{-1/2}$ under
the additional assumption that $G$ is endowed with a complex structure and the initial condition $f$ is $K$-invariant. 

Observe first that we can modify the right-hand side of the estimates in Theorem \ref{thm:Hardy} by a suitable positive bounded factor $\psi$ without loosing the uniqueness property when $16 \alpha\beta t_0^2 >1$. We choose here $\psi$ to be the function used by Harish-Chandra to control the decay of the elements in the $K$-biinvariant $L^2$-Schwartz space on $G$. See for instance \cite{GV}, p. 256. Considered as a $K$-biinvariant function on $G$, the function $\psi$ is uniquely defined on $G$ by the condition 
\begin{equation}\label{eq:D}
\psi(\exp H)=\prod_{\gamma\in \Sigma^+} \left(\frac{\gamma(H)}{\sinh \gamma(H)}\right)^{m_\gamma/2}\,, 
\qquad H \in \mathfrak a.
\end{equation}
Then $\psi$ is a positive $K$-invariant function on $X$ which is bounded. More precisely, one can prove that 
for suitable positive constants $c_1, c_2$ and nonnegative integers $d_1, d_2$ one has
\begin{equation*}
c_1 \Xi(x) (1+\sigma(x))^{-d_1} \leq \psi(x) \leq c_2 \Xi(x) (1+\sigma(x))^{d_2}\,
\end{equation*}
for all $x \in X$. The following result is then a consequence of Theorem \ref{thm:Hardy}.

\begin{Cor}\label{cor:sharpHardy}
Let $f$ be a measurable function on $X$ and assume there exist positive constants $A$ and $\alpha$ so that 
\begin{equation} \label{eq:sharpestfHardy}
 |f(x)|\leq A \, \psi(x) e^{-\alpha\sigma^2(x)}
\end{equation} 
for all $x \in X$.
Then $f \in L^2(X)$. 
Let $u(t,x) \in C(\R : L^2(X))$ be the solution of (\ref{eq:SchroedingerX}) with initial condition $f$.
Suppose, moreover, that there is a time $t_0>0$ and positive constants $B$ and $\beta$ so that 
\begin{equation} \label{eq:sharpestuHardy}
|u(t_0,x)|\leq B\, \psi(x) e^{-\beta \sigma^2(x)}\,. 
\end{equation}
If $16 \alpha\beta  t_0^2 >1$, then $u(t,\cdot)\equiv 0$ for all $t\in \R$.
\end{Cor}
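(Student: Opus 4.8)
The plan is to deduce Corollary \ref{cor:sharpHardy} directly from Theorem \ref{thm:Hardy} by absorbing the factor $\psi$ into a slightly weakened exponential. The key structural fact is that $\psi$ is bounded on $X$ (indeed $0<\psi(x)\leq c_2\Xi(x)(1+\sigma(x))^{d_2}$, and $\Xi\leq 1$), so the hypotheses \eqref{eq:sharpestfHardy} and \eqref{eq:sharpestuHardy} are a priori stronger than the plain Gaussian bounds in Theorem \ref{thm:Hardy}; the only subtlety is quantifying this while keeping the product $16\alpha\beta t_0^2$ strictly above $1$.

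First I would record that $f\in L^2(X)$: from \eqref{eq:sharpestfHardy} and boundedness of $\psi$ we get $|f(x)|\leq A\|\psi\|_\infty e^{-\alpha\sigma^2(x)}$, so the $L^2$-membership follows exactly as in the proof of Theorem \ref{thm:Hardy} via the polar integration formula \eqref{eq:intformulapolar} and the estimate $\delta(H)\leq e^{2\rho(H)}$. Hence $u(t,x)\in C(\R:L^2(X))$ is well defined. Next, since $16\alpha\beta t_0^2>1$ is a strict inequality, I can choose $\alpha'<\alpha$ and $\beta'<\beta$ with $16\alpha'\beta' t_0^2>1$ still. The point is that $\psi(x)e^{-\alpha\sigma^2(x)}\leq C_{\alpha,\alpha'}e^{-\alpha'\sigma^2(x)}$ for a suitable constant: because $\psi(x)\leq c_2(1+\sigma(x))^{d_2}$ and the polynomial $(1+r)^{d_2}$ is dominated by $e^{(\alpha-\alpha')r^2}$ up to a constant depending only on $\alpha-\alpha'>0$ and $d_2$. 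Therefore \eqref{eq:sharpestfHardy} implies $|f(x)|\leq A'e^{-\alpha'\sigma^2(x)}$ and similarly \eqref{eq:sharpestuHardy} implies $|u(t_0,x)|\leq B'e^{-\beta'\sigma^2(x)}$ for appropriate positive constants $A',B'$.

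Finally I would apply Theorem \ref{thm:Hardy} to $f$, $u$ with the constants $\alpha',\beta'$ in place of $\alpha,\beta$: since $16\alpha'\beta' t_0^2>1$ by construction, that theorem yields $u(t,\cdot)\equiv 0$ for all $t\in\R$, which is the desired conclusion. I do not expect any real obstacle here; the only mildly delicate point is the polynomial-versus-Gaussian comparison, which is the elementary observation that $\sup_{r\geq 0}(1+r)^{d_2}e^{-\varepsilon r^2}<\infty$ for every $\varepsilon>0$ and nonnegative integer $d_2$. This is exactly the mechanism announced in Remark \ref{remark:uncert-factors}, namely that the strict inequality $16t_0^2\alpha\beta>1$ leaves room to absorb any factor $\psi$ with $\psi(x)e^{\nu\sigma^2(x)}\in L^\infty(X)$ and $\psi(x)^{-1}e^{\nu\sigma^2(x)}\in L^\infty(X)$ for all $\nu<0$; the function defined by \eqref{eq:D} has both properties in view of the two-sided estimate $c_1\Xi(x)(1+\sigma(x))^{-d_1}\leq\psi(x)\leq c_2\Xi(x)(1+\sigma(x))^{d_2}$ together with \eqref{eq:estimatesXisigma}.
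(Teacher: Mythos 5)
Your proof is correct and follows essentially the same route as the paper, which states the corollary as a direct consequence of Theorem \ref{thm:Hardy} via the observation (Remark \ref{remark:uncert-factors}) that the strict inequality $16\alpha\beta t_0^2>1$ leaves room to absorb the bounded factor $\psi$. In fact, since $\gamma(H)/\sinh\gamma(H)\leq 1$ for all $H$, the function $\psi$ satisfies $\psi\leq 1$, so one may even apply Theorem \ref{thm:Hardy} directly with the original exponents $\alpha,\beta$ and dispense with the shrinking to $\alpha',\beta'$; your polynomial-versus-Gaussian absorption is a harmless (and slightly more general) detour.
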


Suppose now that $G$ has a complex structure and $f$ is a $K$-invariant function on $X$.
A $K$-invariant function on $X$ can be identified with a $W$-invariant function on 
$A=\exp \mathfrak a \equiv \mathfrak a$. Here, as before, $W$ denotes the Weyl group. 
In this identification, the space of $K$-invariant functions in $L^2(X)$ corresponds to the space of $W$-invariants in $L^2(\mathfrak a,\eta^2(H) \,dH)$, 
where 
\begin{equation}\label{eq:eta}
\eta(H)=\prod_{\gamma \in \Sigma^+}\sinh \gamma(H)\,,\qquad H \in \mathfrak a\,.
\end{equation}
Moreover, the radial component on $A^+\equiv \mathfrak a^+$ of the Laplace-Beltrami operator 
$\Delta$ on $X$ is 
\begin{equation}\label{eq:radialDelta}
\frac{1}{\eta(H)} \, \left( \Delta_{\mathfrak a} -|\rho|^2\right) \circ \eta(H)
\end{equation}
where $\Delta_{\mathfrak a}$ is the Laplace operator on $\mathfrak a$. If $n=\dim \mathfrak a$ and
$\{H_j\}_{j=1}^n$ is an orthonormal basis of $\mathfrak a$ with respect to the inner product 
induced by the Killing form, then 
$\Delta_{\mathfrak a}= \sum_{j=1}^n \partial(H_j)^2$.
Because of (\ref{eq:radialDelta}), $u_t=u(t,\cdot)$ is the solution to (\ref{eq:SchroedingerX}) with $K$-invariant initial condition $f \in L^2(X)$ if and only if $\eta(H)u_t(H)$ is the solution 
of the damped Schr\"odinger equation (\ref{eq:Schroedinger}) on $\mathfrak a\equiv \R^n$ with damping parameter $c=|\rho|^2$ and $W$-skew-invariant initial condition $\eta(H)f(H)$.

When $G$ admits a complex structure, all root multiplicities $m_\gamma$ are equal to $2$.
Writing $\psi(H)$ instead of $\psi(\exp H)$, we therefore have
\begin{equation*}
\psi(H)=\frac{\pi(H)}{\eta(H)}\,, \qquad H \in \mathfrak a\,,
\end{equation*}  
where 
\begin{equation*}
\pi(H)=\prod_{\gamma \in \Sigma^+} \gamma(H)\,.
\end{equation*}

Let $\alpha,\beta$ be two positive constants and let  
$$f(x)=\psi(x) e^{-\alpha \sigma^2(x)} e^{-i \sqrt{\alpha\beta} \sigma^2(x)}\,, \qquad x \in X\,.$$
Then $f$ is a $K$-biinvariant function in $L^2(X)$. Considered as a function on $A\equiv \mathfrak a$, we have
$$f(H)= \frac{\pi(H)}{\eta(H)}\, e^{-\alpha|H|^2} e^{-i\sqrt{\alpha\beta}|H|^2}\,, \qquad H \in \mathfrak a\,.$$
Let $u(t,x)$ be the solution of (\ref{eq:SchroedingerX}) with initial condition $f$.
Then $\eta(H)u(t,H)$\,, $H \in \mathfrak a$, is the solution of (\ref{eq:Schroedinger}) with damping parameter $|\rho|^2$ and initial condition $\eta(H)f(H)$. Set $t_0=(16\alpha\beta)^{-1/2}$. By (\ref{eq:uh}) and (\ref{eq:fh}) we have for a constant $C_1$ depending on $t_0$
$$
\eta(H)u(t_0,H)=C_1 e^{i\frac{|H|^2}{4t_0}} \widehat{h_{t_0}}\big(\frac{H}{2t_0}\big)
$$
where
$$h_{t_0}(Y)= \pi(Y) e^{-\alpha|Y|^2}\,.$$
Since
$$\widehat{h_{t_0}}(H)=\pi(i \partial) \big(e^{-\alpha|Y|^2}\big)^\wedge=C_2 \pi(i \partial)
e^{-\frac{|H|^2}{4\alpha}}$$
and $16t_0^2\alpha=\beta^{-1}$, we conclude that
$$
\eta(H)u(t_0,H)=C_3 e^{i\frac{|H|^2}{4t_0}} \pi(i \partial)  e^{-\beta|H|^2}
$$
where $C_3$ is a constant depending on $t_0$.
Moreover,
$\pi(i \partial)= \prod_{\gamma \in \Sigma^+} i \partial(H_\gamma)$, where
$H_\gamma$ is the unique element in $\mathfrak a$ so that $\gamma(H)=\inner{H_\gamma}{H}$ for all 
$H \in \mathfrak a$. Since $\partial(H_\gamma) e^{-\beta|H|^2}=-2\beta \gamma(H) e^{-\beta|H|^2}$, we have $\pi(i \partial) e^{-\beta|H|^2}=P(H) e^{-\beta|H|^2}$ where $P(H)$ is a polynomial of degree 
$\deg P=\deg \pi$. Observe that $P(H)e^{-\beta|H|^2}$ is a constant multiple of $e^{-i\frac{|H|^2}{4t_0}} \eta(H)u(t_0,H)$. Since $\eta(H)$ is $W$-skew-invariant, so is $P(H)$. Hence $\pi(H)$ divides 
$P(H)$. Thus $P(H)=c \pi(H)$ for a constant $c$. 
Therefore
$$u(t_0,H)=c_{t_0} \frac{\pi(H)}{\eta(H)} e^{-\beta|H|^2} e^{i\frac{|H|^2}{4t_0}}$$
for all $H \in \mathfrak a$, i.e.
$$u(t_0,x)=c_{t_0} \psi(x) e^{-\beta\sigma^2(x)} e^{i\frac{\sigma^2(x)}{4t_0}}$$
for all $x \in X$. 
Since $f$ and $u_t$ respectively satisfy the estimates (\ref{eq:sharpestfHardy}) and (\ref{eq:sharpestuHardy}), the uniqueness property in Corollary \ref{cor:sharpHardy} fails 
when $16 t_0^2 \alpha\beta=1$. 

In fact, we shall prove in Theorem \ref{thm:complexnonunique} below that the function $f$ we just considered is (up to constant multiples) the only function which can occur in the case $16t_0^2 \alpha\beta=1$. We shall derive this property from the following equality case of Hardy's theorem on $\R^n$; see \cite{Thangavelu}, Theorem 1.4.4.

\begin{Lemma} \label{lemma:HardyequalR}
Suppose $h$ is a measurable function on $\R^n$ that satisfies the estimates 
$$|h(x)|\leq C(1+|x|^2)^m e^{-a|x|^2}\qquad\textrm{and}\qquad 
|\widehat{h}(\xi)|\leq C(1+|\xi|^2)^m e^{-b|\xi|^2}$$
for some positive constants $a$ and $b$.
When $ab=\frac{1}{4}$, then $h(x)=P(x) e^{-a|x|^2}$ where $P$ is a polynomial of degree 
$\leq 2m$.
\end{Lemma}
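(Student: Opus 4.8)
The plan is to reduce the $n$-dimensional equality case to the one-dimensional equality case of Hardy's theorem via the standard Radon-type slicing argument, then reassemble. First I would dispose of trivial reductions: replacing $h$ by a dilate and noting that the hypotheses are invariant under rotations of $\R^n$, I may assume $a=b=\tfrac12$ (so $ab=\tfrac14$). The key observation is that the polynomial weights $(1+|x|^2)^m$ are harmless: for any fixed direction $\omega\in S^{n-1}$ and any $x'$ in the orthogonal hyperplane $\omega^\perp\cong\R^{n-1}$, the slice $t\mapsto h(x'+t\omega)$ still satisfies a one-dimensional Hardy-type estimate $|h(x'+t\omega)|\le C_{x'}(1+t^2)^m e^{-t^2/2}$, and similarly the partial Fourier transform in the direction $\omega$ of a slice of $\widehat h$ obeys the dual estimate; one must check that the Gaussian $e^{-|x'|^2/2}$ factor coming out of the complementary variables can be absorbed into the constant $C_{x'}$, which it can since $m$ is fixed. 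So along each line the one-dimensional Lemma \ref{lemma:HardyequalR} (in the $n=1$, $ab=\tfrac14$ case) would give $h(x'+t\omega)=P_{x',\omega}(t)e^{-t^2/2}$ with $\deg_t P_{x',\omega}\le 2m$.

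Next I would upgrade this line-by-line description to a global statement. The cleanest route is to multiply through: set $g(x):=e^{|x|^2/2}h(x)$. The slicing argument above shows that $g$ restricted to every affine line is a polynomial of degree $\le 2m$ in the line parameter. A standard lemma (a function on $\R^n$ whose restriction to every line is a polynomial of degree $\le d$ is itself a polynomial of degree $\le d$ — e.g. apply finite differences, or note that all partial derivatives of order $>2m$ vanish in every direction) then yields that $g$ agrees almost everywhere with a polynomial $P$ of total degree $\le 2m$. Hence $h(x)=P(x)e^{-|x|^2/2}$, which after undoing the normalizing dilation is exactly the asserted form $h(x)=P(x)e^{-a|x|^2}$ with $\deg P\le 2m$.

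The main obstacle I expect is the bookkeeping in the slicing step — specifically, making sure the estimate on $\widehat h$ really does descend to the right dual estimate on the one-dimensional Fourier transform of each slice of $h$. One cannot directly slice $\widehat h$; rather one uses the projection-slice relation, namely that the $1$-dimensional Fourier transform of $t\mapsto\int_{\omega^\perp}h(x'+t\omega)\,dx'$ equals $\widehat h$ restricted to the line $\R\omega$. So the argument more naturally applies to the \emph{projection} $(\mathcal R h)(\omega,t):=\int_{\omega^\perp}h(x'+t\omega)\,dx'$, which satisfies $|(\mathcal R h)(\omega,t)|\le C'(1+t^2)^{m'}e^{-t^2/2}$ for a slightly larger $m'$ (absorbing the Gaussian integral over $\omega^\perp$ costs only constants and a bounded power of $(1+t^2)$), while its $1$-D transform is $\widehat h|_{\R\omega}$, satisfying the dual bound. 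This gives $(\mathcal R h)(\omega,\cdot)=Q_\omega(t)e^{-t^2/2}$ for each $\omega$. To recover $h$ itself (not just its projections) one then inverts the Radon/projection transform: a tempered distribution whose projections in every direction are polynomial multiples of $e^{-t^2/2}$ must be a polynomial multiple of $e^{-|x|^2/2}$, because $e^{|x|^2/2}h$ has projections that are polynomials of bounded degree, forcing it to be a polynomial. I would carry out this inversion carefully — it is the only place where one needs more than elementary one-variable facts — and then conclude as above.
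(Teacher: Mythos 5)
The paper does not actually prove this lemma: it is quoted from Thangavelu's book (\cite{Thangavelu}, Theorem 1.4.4), so your proposal has to be judged on its own terms. The overall strategy --- reduce to the one-dimensional equality case of Hardy's theorem and reassemble --- is the standard one, but the reassembly step, which you yourself single out as the delicate point, has a genuine gap. In your final (projection) version, the one-dimensional result gives $\widehat h(\xi\omega)=R_\omega(\xi)e^{-\xi^2/2}$ for each direction $\omega$, i.e.\ that $F(\xi):=e^{|\xi|^2/2}\widehat h(\xi)$ restricts to a polynomial of degree $\le 2m$ on every line \emph{through the origin}. That is much weaker than being a polynomial on every affine line and does not force $F$ to be a polynomial: $F(\xi)=|\xi|^2\,g(\xi/|\xi|)$, with $g$ an arbitrary even function on the sphere, has degree $2$ on every line through $0$ without being a polynomial. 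The substitute justification you offer (``$e^{|x|^2/2}h$ has projections that are polynomials of bounded degree'') is not meaningful: those hyperplane integrals diverge, and multiplication by $e^{|x|^2/2}$ does not commute with taking projections. The missing ingredient is analyticity: the Gaussian decay of $h$ makes $\widehat h$ entire on $\C^n$, hence $F$ is entire; writing $F=\sum_k F_k$ in homogeneous components, the identity $F(t\omega)=\sum_k F_k(\omega)t^k$ together with the degree bound on every line through the origin forces $F_k\equiv 0$ for $k>2m$, so $F$ is a polynomial of degree $\le 2m$ and $h$ has the asserted form.

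Your abandoned slicing route from the second paragraph can in fact be completed, and in a way that answers the ``obstacle'' you flagged: the one-dimensional Fourier transform $H(x',s)$ of the slice $t\mapsto h(x'+t\omega)$ is obtained from $\widehat h$ by inverting the transform in the $\omega^\perp$-variables, namely $H(x',s)=c\int_{\omega^\perp}\widehat h(\eta+s\omega)\,e^{i\langle x',\eta\rangle}\,d\eta$, so the hypothesis on $\widehat h$ gives $|H(x',s)|\le C'(1+s^2)^m e^{-s^2/2}$ uniformly in $x'$. This does yield the affine-line statement for $g=e^{|x|^2/2}h$, to which the every-line lemma legitimately applies (modulo null-set bookkeeping, since $h$ is only measurable and the one-dimensional conclusion holds a.e.\ on each slice). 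Either repair is acceptable, but as submitted the proof is incomplete exactly at the inversion step. You should also state where the one-dimensional equality case with polynomial factors comes from, since it is precisely the $n=1$ instance of the lemma being proved and cannot be taken for granted silently.
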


\begin{Thm}\label{thm:complexnonunique}
Let $X=G/K$ be a Riemannian symmetric space with $G$ complex. Suppose $f$ is a $K$-invariant 
function on $X$ satisfying (\ref{eq:sharpestfHardy}). Let $u(t,\cdot)$ be the solution of 
(\ref{eq:SchroedingerX}) with initial condition $f$. Assume that $u(t_0,x)$ satisfies (\ref{eq:sharpestuHardy}). If $16\alpha\beta t_0^2=1$, then there exists a constant $C$  
so that  
\begin{equation*}
f(x)=C \, \psi(x) e^{-\alpha \sigma^2(x)} e^{-i \sqrt{\alpha\beta} \sigma^2(x)}\,.
\end{equation*}
for all $x \in X$.
\end{Thm}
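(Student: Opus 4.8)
The plan is to pass to the flat picture $A\equiv\mathfrak a\equiv\R^n$ with $n=\dim\mathfrak a$, and to reduce the statement to the equality case of Hardy's theorem recorded in Lemma~\ref{lemma:HardyequalR}, following the computation already carried out before the statement for the extremal function. Identify the $K$-invariant solutions $u(t,\cdot)$ with $W$-invariant functions on $\mathfrak a$ and put $v(t,H):=\eta(H)\,u(t,H)$. As recalled above, $v\in C(\R:L^2(\R^n))$ solves the damped Schr\"odinger equation~(\ref{eq:Schroedinger}) with damping parameter $|\rho|^2$ and $W$-skew-invariant initial datum $v_0:=\eta f$. Since $G$ is complex, every multiplicity equals $2$, so $\psi=\pi/\eta$ with $\pi(H)=\prod_{\gamma\in\Sigma^+}\gamma(H)$; hence (\ref{eq:sharpestfHardy}) and (\ref{eq:sharpestuHardy}) translate into $|v_0(H)|\le A\,|\pi(H)|\,e^{-\alpha|H|^2}$ and $|v(t_0,H)|\le B\,|\pi(H)|\,e^{-\beta|H|^2}$. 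Fixing any integer $m\ge|\Sigma^+|$ we thus get $|v_0(H)|\le C(1+|H|^2)^m e^{-\alpha|H|^2}$ and $|v(t_0,H)|\le C(1+|H|^2)^m e^{-\beta|H|^2}$; moreover $v_0\in L^1(\R^n)\cap L^2(\R^n)$, since $f\in L^2(X)$ (Corollary~\ref{cor:sharpHardy}) gives $v_0\in L^2(\R^n)$ and the pointwise bound gives $v_0\in L^1(\R^n)$.

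Next I would insert this into the explicit solution formula. Because $v_0\in L^1(\R^n)$, formulas (\ref{eq:uh}) and (\ref{eq:fh}) give, for almost every $H$,
$$v(t_0,H)=(2\pi)^{n/2}(2t_0)^{-n/2}e^{-i|\rho|^2 t_0}\,e^{-\pi i n/4}\,e^{i|H|^2/(4t_0)}\,\widehat{h_{t_0}}\bigl(H/(2t_0)\bigr),\qquad h_{t_0}(Y)=e^{i|Y|^2/(4t_0)}\,v_0(Y).$$
Since $|h_{t_0}|=|v_0|$, the bound on $v(t_0,\cdot)$ yields, after the substitution $\xi=H/(2t_0)$ and absorption of $t_0$-dependent constants, $|\widehat{h_{t_0}}(\xi)|\le C'(1+|\xi|^2)^m e^{-4\beta t_0^2|\xi|^2}$, while $|h_{t_0}(Y)|\le C(1+|Y|^2)^m e^{-\alpha|Y|^2}$. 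Replacing $C,C'$ by their maximum, Lemma~\ref{lemma:HardyequalR} applies with $a=\alpha$, $b=4\beta t_0^2$; the borderline hypothesis $16\alpha\beta t_0^2=1$ is exactly $ab=\tfrac14$, so $h_{t_0}(Y)=P(Y)e^{-\alpha|Y|^2}$ for a polynomial $P$ of degree $\le 2m$, and therefore
$$v_0(H)=P(H)\,e^{-\alpha|H|^2}\,e^{-i|H|^2/(4t_0)}.$$

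Finally I would read off the form of $f$. The factor $e^{-\alpha|H|^2}e^{-i|H|^2/(4t_0)}$ is $W$-invariant and nowhere zero, while $v_0$ is $W$-skew-invariant, so $P$ is $W$-skew-invariant; hence $\pi$ divides $P$. (For each $\gamma\in\Sigma^+$ the reflection $r_\gamma$ fixes the hyperplane $\gamma=0$ and changes the sign of $P$, so $\gamma\mid P$; distinct positive roots are pairwise non-proportional since for complex $G$ the restricted root system is reduced, whence $\pi=\prod_{\gamma\in\Sigma^+}\gamma$ divides $P$.) Write $P=\pi Q$ with $Q$ a $W$-invariant polynomial; dividing by $\eta$ and using $\psi=\pi/\eta$ gives, for all $H\in\mathfrak a$,
$$f(H)=\psi(H)\,Q(H)\,e^{-\alpha|H|^2}\,e^{-i|H|^2/(4t_0)}.$$
Since $\psi>0$ everywhere, (\ref{eq:sharpestfHardy}) forces $|Q(H)|\le A$ for every $H$, so the polynomial $Q$ is bounded, hence constant, say $Q\equiv C$. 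As $16\alpha\beta t_0^2=1$ gives $1/(4t_0)=\sqrt{\alpha\beta}$, this reads $f(x)=C\,\psi(x)\,e^{-\alpha\sigma^2(x)}\,e^{-i\sqrt{\alpha\beta}\,\sigma^2(x)}$, which is the assertion.

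I expect the main obstacle to be bookkeeping rather than a conceptual difficulty: one has to make sure that $v(t_0,\cdot)=\gamma_{|\rho|^2,t_0}\ast v_0$ holds as an honest almost-everywhere identity so that the explicit formula (\ref{eq:uh}) is legitimately available — this is exactly why the integrability $v_0\in L^1(\R^n)\cap L^2(\R^n)$ matters — and one has to invoke the classical fact that $W$-skew-invariant polynomials are divisible by the Weyl product $\pi$, together with the identity $\psi=\pi/\eta$ valid for complex $G$. Keeping track of the constants and of the dilation $\xi=H/(2t_0)$ in the Hardy estimate so that $ab=\tfrac14$ falls out exactly from $16\alpha\beta t_0^2=1$ is the only other point that needs care.
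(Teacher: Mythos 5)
Your argument is correct and follows the paper's proof essentially step for step: the same reduction $v=\eta u$ to the damped Euclidean equation with parameter $|\rho|^2$, the same pointwise bounds $|h_{t_0}(H)|\le C(1+|H|^2)^m e^{-\alpha|H|^2}$ and $|\widehat{h_{t_0}}(\xi)|\le C(1+|\xi|^2)^m e^{-4\beta t_0^2|\xi|^2}$, the same appeal to Lemma \ref{lemma:HardyequalR} at $ab=\tfrac14$, and the same divisibility of the $W$-skew-invariant polynomial $P$ by $\pi$. The only (harmless) deviation is the last step: the paper takes $m$ minimal with $2m\ge|\Sigma^+|$ and concludes $\deg q\le 2m-|\Sigma^+|\le 1$, hence $q$ constant, whereas you allow any $m\ge|\Sigma^+|$ and instead deduce from (\ref{eq:sharpestfHardy}) and $\psi>0$ that $Q$ is a bounded, hence constant, polynomial --- both arguments are valid.
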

\begin{proof}
It remains to prove that if $f$ and $u_{t_0}$ satisfy (\ref{eq:sharpestfHardy}) and (\ref{eq:sharpestuHardy}), respectively, then $f(x)$ is a constant multiple of $\psi(x) e^{-\alpha \sigma^2(x)} e^{-i \sqrt{\alpha\beta} \sigma^2(x)}$. 

We know that $\eta(H)u_t(H)$ is the solution of the damped Schr\"odinger equation on $\mathfrak a$ with damping parameter $|\rho|^2$ and initial condition $\eta(H)f(H) \in L^2(\mathfrak a,dH)$. Hence, by (\ref{eq:uh}), 
\begin{equation*}
\eta(H)u(t_0,H)=(\gamma_{|\rho|^2,t_0} \ast \eta \cdot f)(H)=C_{t_0} e^{i\frac{|H|^2}{4t_0}} \widehat{h_{t_0}}\big(\frac{H}{2t_0}\big)
\end{equation*}
where 
\begin{equation*}
h_{t_0}(H)=e^{i\frac{|H|^2}{4t_0}} \eta(H)f(H)\,.
\end{equation*}
Hence
\begin{equation*}
|h_{t_0}(H)|=|\eta(H)f(H)|=|\pi(H)\psi(H)^{-1} f(H)| \leq A |\pi(H)| e^{-\alpha|H|^2}
\end{equation*}
and
\begin{equation*}
\big|\widehat{h_{t_0}}\big(\frac{H}{2t_0}\big)\big| = C_{t_0}^{-1} |\pi(H)\psi(H)^{-1} u(t_0,H)| 
\leq B C_{t_0}^{-1} |\pi(H)| e^{-\beta |H|^2}\,.
\end{equation*}
So 
\begin{equation*}
|\widehat{h_{t_0}}(\xi)| \leq C'_{t_0} |\pi(\xi)| e^{-4 t_0^2\beta |\xi|^2}\,.
\end{equation*}
Let $m$ be the smallest integer such that $2m \geq |\Sigma^+|$. Since 
$|\pi(H)|\leq C(1+|H|^2)^m$ and $\alpha(4 t_0^2 \beta)=1/4$, we obtain from 
Lemma \ref{lemma:HardyequalR} that $h_{t_0}(H)=P(H) e^{-\alpha|H|^2}$ where $P(H)$ is a polynomial of degree
$\leq 2m$. Therefore
$e^{i\frac{|H|^2}{4t_0}} \eta(H)f(H)=P(H) e^{-\alpha|H|^2}$.
This equality shows that $P(H)$ must be $W$-skew-invariant, so divisible by $\pi(H)$.
Hence $P(H)=q(H)\pi(H)$ for a $W$-invariant polynomial $q(H)$ so that 
$\deg q=\deg P -\deg \pi\leq 2m -|\Sigma^+|\leq 1$. This is only possible when $q(H)=C$ is a constant.
Thus 
$f(H)=C \frac{\pi(H)}{\eta(H)} \, e^{-i\frac{|H|^2}{4t_0}} e^{-\alpha|H|^2}$, which proves the claim.
\end{proof}

\bigskip

\end{document}